\def\titlerunning#1{\gdef\titrun{#1}}
\def\author#1{\gdef\autrun{\def\and{\unskip, }#1}\gdef\@author{#1}}
\def\address#1{{\def\and{\\\hspace*{18pt}}\renewcommand{\thefootnote}{}%
\footnote {#1}}%
\markboth{\autrun}{\titrun}}
\def\email#1{e-mail: #1}
\def\subjclass#1{{\renewcommand{\thefootnote}{}%
\footnote{\emph{Mathematics Subject Classification (2010):} #1}}}
\def\keywords#1{\par\medskip
\noindent\textbf{Keywords.} #1}
\newtheorem{theorem}{Theorem}[section]
\newtheorem{lemma}[theorem]{Lemma}
\newtheorem{definition}[theorem]{Definition}
\newtheorem{proposition}[theorem]{Proposition}
\newtheorem{remark}[theorem]{Remark}
\newtheorem{corollary}[theorem]{Corollary}
\newtheorem{example}[theorem]{Example}
\newcounter{Mr}
\newtheorem{Result}[Mr]{\textbf{Main Result}}
\newcommand{\gm}{\gamma}
\newcommand{\R}{\mathbb{R}}
\newcommand{\Proof}{\begin{proof}}
\newcommand{\End}{\end{proof}}
\numberwithin{equation}{section}
\begin{document}


\baselineskip=15pt


\titlerunning{Aubry-Mather and weak KAM theories for contact Hamiltonian systems} 

\title{Aubry-Mather and weak KAM theories for contact Hamiltonian systems. Part 1: Strictly increasing case}

\author{Kaizhi Wang \and Lin Wang \and Jun Yan}

\date{\today}

\maketitle

\address{Kaizhi Wang: School of Mathematical Sciences, Shanghai Jiao Tong University, Shanghai 200240, China; \email{kzwang@sjtu.edu.cn}
\and
Lin Wang: Yau Mathematical Sciences Center, Tsinghua University, Beijing 100084, China; \email{lwang@math.tsinghua.edu.cn}
\and Jun Yan: School of Mathematical Sciences, Fudan University and Shanghai Key Laboratory for Contemporary Applied Mathematics, Shanghai 200433, China;
\email{yanjun@fudan.edu.cn}}
\subjclass{37J50; 35F21; 35D40}

\begin{abstract}
 This paper is concerned with the study of Aubry-Mather and weak KAM theories for contact Hamiltonian systems with Hamiltonians $H(x,u,p)$ defined on $T^*M\times\R$, satisfying Tonelli conditions with respect to $p$ and $0<\frac{\partial H}{\partial u}\leqslant \lambda$ for some $\lambda>0$, where $M$ is a connected, closed and  smooth manifold. First, we show the uniqueness of the backward weak KAM solutions of the corresponding Hamilton-Jacobi equation. Using the unique backward weak KAM solution $u_-$, we prove the existence of the maximal forward weak KAM solution $u_+$. Next, we analyse  Aubry set for the contact Hamiltonian system showing that it is the intersection of two Legendrian pseudographs $G_{u_-}$ and $G_{u_+}$, and that the projection $\pi:T^*M\times \R\to M$ induces a bi-Lipschitz homeomorphism $\pi|_{\tilde{\mathcal{A}}}$ from  Aubry set $\tilde{\mathcal{A}}$ onto   the projected Aubry set $\mathcal{A}$. At last, we introduce the notion of barrier functions and study their interesting properties along calibrated curves. Our analysis is based on a recent method by  \cite{WWY,WWY1}.
 
\keywords{Weak KAM solutions, Legendrian pseudographs, Aubry sets, contact Hamiltonian systems, viscosity solutions}
\end{abstract}

\newpage

\tableofcontents

\newpage
\section{Introduction and main results}
\setcounter{equation}{0}
\setcounter{footnote}{0}
\subsection{Motivation and background}
Let $M$ be a connected, closed and  smooth manifold. We choose, once and for all, a $C^\infty$ Riemannian metric $g$ on $M$. The cotangent bundle $T^*M$ has a natural symplectic structure $\omega$. The pair $(T^*M,\omega)$ is  a symplectic manifold.
Let $H:T^*M\rightarrow\mathbb{R}$ be a $C^2$  function called a Hamiltonian.  In local coordinates, the Hamilton's equations are formulated as:
\begin{equation*}
\begin{cases}
\dot{x}=\frac{\partial H}{\partial p}(x,p),\\
\dot{p}=-\frac{\partial H}{\partial x}(x,p).
\end{cases}
\end{equation*}

Let $J^1(M,\mathbb{R})$ denote the manifold of 1-jets of functions on $M$.  The standard contact form on $J^1(M,\mathbb{R})$ is the 1-form $\alpha=du-pdx$.
$J^1(M,\mathbb{R})$ has a natural contact structure $\xi$, which is globally defined by the Pfaffian equation $\alpha=0$, i.e., $\xi=\text{ker}\alpha$. The pair $(J^1(M,\mathbb{R}),\xi)$ is  a contact manifold. There is a canonical diffeomorphism between $J^1(M,\mathbb{R})$ and  $T^*M\times\mathbb{R}$. Thus, $(T^*M\times\mathbb{R},\xi)$ is also a contact manifold.
 Let $H:T^*M\times\mathbb{R}\to \mathbb{R}$ be a $C^r$ ($r\geq 2$) function called a contact Hamiltonian. In local coordinates,
the equations of the contact flow generated by $H$ read
\begin{align}\label{c}\tag{CH}
\left\{
        \begin{array}{l}
        \dot{x}=\frac{\partial H}{\partial p}(x,u,p),\\
        \dot{p}=-\frac{\partial H}{\partial x}(x,u,p)-\frac{\partial H}{\partial u}(x,u,p)p,\qquad (x,p)\in T^*M,\ u\in\mathbb{R},\\
        \dot{u}=\frac{\partial H}{\partial p}(x,u,p)\cdot p-H(x,u,p).
         \end{array}
         \right.
\end{align}
In 1888, S. Lie \cite{Lie} characterized that the contact flow generated by  (\ref{c})  preserves the contact structure $\xi$. From the view of physics,  equations (\ref{c}) appear naturally in contact Hamiltonian mechanics \cite{Bra,BCT1,Gre,Chr}, which is a natural extension of Hamiltonian mechanics \cite{Arn,Ar}.

It is well-known that significant progress has been achieved in the study of Hamiltonian dynamical systems. Several celebrated theories have been founded in this area, including Poincar\'{e}-Birkhoff theory \cite{Bir1,Bir,Poi}, KAM theory \cite{Ar1,Ko,Mo1}, Aubry-Mather theory \cite{Au,AD,M1,Mvc}, weak KAM theory \cite{Fat97a,Fat97b,Fat98a,Fat98b,Fat-b,FS}, Aubry-Mather or weak KAM aspects of Hamilton-Jacobi equations and of symplectic geometry (see \cite{Be,Be1,CFR,FS} for instance). Comparably, research on contact Hamiltonian systems is not as sufficient as research on Hamiltonian systems. The relation between classical Hamiltonian systems and contact Hamiltonian systems is similar to the one between symplectic geometry and contact geometry. In \cite{Ar2}, Arnold wrote that {\it symplectic geometry is at present generally accepted as the natural basis for mechanics and for calculous of variations.
Contact geometry, which is the odd-dimensional counterpart of symplectic ones, is not yet so popular, although it is the natural basis for optics and for the theory of wave propagation.}

This paper is devoted to the study of Aubry-Mather and weak KAM theories for contact Hamiltonian systems \eqref{c}. We would now like to recall available related works in the literature.
Mar\`o and Sorrentino \cite{MS} developed an analogue of Aubry-Mather theory for a class of dissipative systems, namely conformally symplectic systems
\begin{align}\label{DCH}
\left\{
\begin{array}{l}
\dot{x}=\frac{\partial H_0}{\partial p}(x,p),\\
\dot{p}=-\frac{\partial H_0}{\partial x}(x,p)-\lambda p,
\end{array}
\right.
\end{align}
where $\lambda>0$ is a constant. They obtained the existence of minimal sets for system (\ref{DCH}) and
described their structure and their dynamical significance. Note that system (\ref{DCH}) is a contact Hamiltonian system with $H(x,u,p)=H_0(x,p)+\lambda u$ and is closely related to the infinite horizon discounted equation
\begin{align}\label{dis}
\lambda u_\lambda+H_0(x,Du_\lambda)=c(H_0).
\end{align}
In order to find a viscosity solution of 
\begin{align}\label{di}
H_0(x,Du)=c(H_0),
\end{align}
Lions, Papanicolaou and Varadhan \cite{LPV} introduced the so called ergodic approximation technique. The idea is to study the behavior of $u_\lambda$ when the discount factor  $\lambda$ tends to zero. More precisely, if $u_\lambda$ is a viscosity solution of equation \eqref{dis}, then $u_\lambda$ converges uniformly, as $\lambda\to 0$, to a specific solution  of equation \eqref{di}. For more details on this problem, 
we refer the reader to Iturriaga and S\'anchez-Morgado \cite{IS}, Gomes \cite{Go},  
Davini, Fathi, Iturriaga and Zavidovique \cite{Dav,DFIZ2}.

%

%
%

\medskip

In this paper, we assume that the contact Hamiltonian $H$ is  of class $C^3$ and satisfies:
\begin{itemize}
\item [\textbf{(H1)}] {\it Strict convexity}:  the Hessian $\frac{\partial^2 H}{\partial p^2} (x,u,p)$ is positive definite for all $(x,u,p)\in T^*M\times\R$;
    \item [\textbf{(H2)}] {\it Superlinearity}: for each $(x,u)\in M\times\R$, $H(x,u,p)$ is  superlinear in $p$;
\item [\textbf{(H3)}] {\it Moderate increasing}: there is a constant $\lambda>0$ such that for each $(x,u,p)\in T^{\ast}M\times\R$,
                        \begin{equation*}
                        0<\frac{\partial H}{\partial u}(x,u,p)\leq\lambda.
                        \end{equation*}
\end{itemize}
It is clear that (H1)-(H2) are Tonelli conditions with respect to the argument $p$. Under assumptions (H1), (H2) and $|\frac{\partial H}{\partial u}|\leqslant \lambda$, the authors provided implicit variational principles for contact Hamiltonian system \eqref{c} in \cite{WWY}. Later, under the same assumptions we  studied in \cite{WWY1}  the existence of viscosity solutions to both $H(x,u,Du)=c$ for some constant $c$ and $w_t+H(x,w,w_x)=0$ by using the implicit variational principles. Based on the results obtained in the two aforementioned papers, we provide some Aubry-Mather and weak KAM-type results for contact Hamiltonian system \eqref{c} under assumptions (H1)-(H3) in the present paper.

\medskip

More precisely, this paper aims to:{\em
\begin{itemize}
	\item  provide a necessary and sufficient condition for the existence of backward  weak KAM solutions (equivalently, viscosity solutions) of 
\begin{align}\label{hj}\tag{HJ}
	H(x,u,Du)=0.
\end{align}
{\em Under the necessary and sufficient condition mentioned above:}
\item show the uniqueness of backward weak KAM solutions of \eqref{hj} and the existence of the maximal forward weak KAM solution of \eqref{hj};
\item study the regularity of weak KAM solutions on the projected Aubry set $\mathcal{A}$;
\item show that Aubry set $\tilde{\mathcal{A}}$ is the intersection of two Legendrian pseudographs $G_{u_-}$ and $G_{u_+}$;
\item show that the projection $\pi:T^*M\times \R\to M$ induces a bi-Lipschitz homeomorphism $\pi|_{\tilde{\mathcal{A}}}$ from $\tilde{\mathcal{A}}$ onto $\mathcal{A}$;
\item introduce the notion of barrier functions and using their interesting properties show that limit sets of  calibrated orbits are subsets of $\tilde{\mathcal{A}}$.

\end{itemize}
}

\subsection{Statement of main results}
Now we  introduce the main results of this paper.
First of all, we need to recall the notion of Ma\~n\'e's critical value \cite{Mn3} of classical
Tonelli Hamiltonians. For any given $a\in\R$, $H(x,a,p)$ is a classical Tonelli Hamiltonian. 
Denote by $c(H^a)$ Ma\~n\'e's  critical value  of $H(x,a,p)$. By a result of Contreras {\em et al.} \cite{CIPP}, $c(H^a)$ can be  represented as
\[
c(H^a)=\inf_{u\in C^\infty(M,\R)}\sup_{x\in M}H(x,a,Du(x)).
\]

For contact Tonelli Hamiltonians, we would like to introduce a notion of admissibility. We say that $H(x,u,p)$ is  {\em admissible}, if there exists $a\in \R$ such that $c(H^a)=0$.
The following admissibility assumption  is the premise of our main results. 
\begin{itemize}
\item [\textbf{(A)}] {\it Admissibility}:  $H(x,u,p)$ is admissible. 
\end{itemize}
 
 For classical Tonelli Hamiltonians $H(x,p)$, $H(x,p)-c(H)$ is admissible, where $c(H)$ denotes Ma\~{n}\'{e}'s critical value of $H(x,p)$. For contact Tonelli Hamiltonians $H(x,u,p)$, $H(x,u,p)$ is admissible, if it satisfies $\frac{\partial H}{\partial u}\geq \delta>0$.  The following theorem gives an explanation for why we assume (A).

\medskip

\noindent \textbf{(H3')} {\it Non-decreasing}: there is a constant $\lambda>0$ such that for each $(x,u,p)\in T^{\ast}M\times\R$, 
	\[
	0\leq\frac{\partial H}{\partial u}(x,u,p)\leq \lambda,
	\]
\begin{theorem}\label{Ad}
	Assume (H1), (H2) and (H3').
	Admissibility can be characterized in either of the following two ways:
\begin{itemize}
		\item[(1)] for any $(x_0,u_0)\in M\times\mathbb{R}$ and any $\delta>0$, $h_{x_0,u_0}(x,t)$ is  bounded on $M\times[\delta,+\infty)$;
	\item[(2)] equation $H(x,u,Du)=0$ admits viscosity solutions.
\end{itemize}
\end{theorem}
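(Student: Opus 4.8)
The plan is to prove the two equivalences (A)$\Leftrightarrow$(2) and (A)$\Leftrightarrow$(1) separately, using three things already available: the implicit variational principle of \cite{WWY} with the basic properties of $h_{x_0,u_0}$ (monotone and $1$-Lipschitz in $u_0$, a priori speed bounds for minimizers, the semigroup property, and the non-implicit form $h_{x_0,u_0}(x,t)=\inf_\gamma u_\gamma(t)$ with $\dot u_\gamma=L(\gamma,u_\gamma,\dot\gamma)$, $u_\gamma(0)=u_0$); the Lax--Oleinik semigroup $T_t^-$ of \cite{WWY1}, its order-preserving property, and the equivalence ``$u$ solves \eqref{hj} in the viscosity sense $\Leftrightarrow$ $T_t^-u=u$ for all $t\ge 0$''; and classical weak KAM theory for each Tonelli Hamiltonian $H^a(x,p)=H(x,a,p)$. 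For (A)$\Rightarrow$(2) I would fix $a$ with $c(H^a)=0$, take a backward weak KAM solution $w_a$ of $H(x,a,Dw)=0$, and form the shifted functions $\phi_0:=w_a-\max_M w_a+a$ and $\psi_0:=w_a-\min_M w_a+a$. Since $\phi_0\le a\le\psi_0$ with $D\phi_0=D\psi_0=Dw_a$ and $H$ is non-decreasing in $u$, $\phi_0$ is a viscosity subsolution and $\psi_0$ a viscosity supersolution of \eqref{hj}; hence $T_t^-\phi_0$ is non-decreasing, $T_t^-\psi_0$ non-increasing, and by monotonicity of $T_t^-$ together with $\phi_0\le\psi_0$ one gets the sandwich $\phi_0\le T_t^-\phi_0\le T_t^-\psi_0\le\psi_0$ for all $t\ge0$. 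Then $u_-:=\lim_{t\to+\infty}T_t^-\phi_0$ exists and is bounded; equi-Lipschitz regularity of $\{T_t^-\phi_0:t\ge1\}$ makes the limit uniform, so $u_-$ is Lipschitz and $T_s^-u_-=u_-$ for all $s\ge0$, i.e. $u_-$ solves \eqref{hj}.

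For (2)$\Rightarrow$(A), let $\bar u$ be a viscosity solution of \eqref{hj} and set $a_-=\min_M\bar u$, $a_+=\max_M\bar u$. Using $\bar u\ge a_-$ and monotonicity of $H$ in $u$, a standard test-function argument shows $\bar u$ is a viscosity subsolution of $H(x,a_-,Du)=0$, whence $c(H^{a_-})\le 0$. For the other inequality, along a backward calibrated curve $\gamma:(-\infty,0]\to M$ of $\bar u$ with $\gamma(0)$ a maximum point of $\bar u$, the $u$-value equals $\bar u(\gamma(s))\le a_+$, so by monotonicity of $L$ in $u$, $\int_{-t}^{0}L(\gamma,a_+,\dot\gamma)\,ds\le \bar u(\gamma(0))-\bar u(\gamma(-t))\le a_+-a_-$ for all $t>0$; since the left side dominates the classical minimal action of $L(\cdot,a_+,\cdot)$, which would tend to $+\infty$ were $c(H^{a_+})<0$, we get $c(H^{a_+})\ge0$. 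Because $a\mapsto c(H^a)$ is $\lambda$-Lipschitz and non-decreasing (from $0\le\partial H/\partial u\le\lambda$ and the inf--sup formula), the intermediate value theorem yields $a_*\in[a_-,a_+]$ with $c(H^{a_*})=0$, i.e. (A).

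For (A)$\Rightarrow$(1): by classical weak KAM theory, $c(H^a)=0$ makes the classical minimal action $h^a_t(y,x)$ of $L(\cdot,a,\cdot)$ bounded on $M\times M\times[\delta,+\infty)$ (bounded below since the critical Ma\~n\'e potential is finite, bounded above by following a calibrated ray of $w_a$). Fixing $(x_0,u_0)$ and using $h_{x_0,u_0}(x,t)=\inf_\gamma u_\gamma(t)$, a Gronwall comparison based on $-\lambda\le\partial L/\partial u\le0$ and these bounds confines $u_\gamma(t)$ to an interval independent of $t\ge\delta$ (when $u_\gamma\ge a$ its increments are controlled by $L(\cdot,a,\cdot)$ along $\gamma$; when $u_\gamma<a$ it is driven back up), so $h_{x_0,u_0}(\cdot,t)$ is bounded on $M\times[\delta,+\infty)$. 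For (1)$\Rightarrow$(A): apply (1) to $H$ and to the strictly increasing regularizations $H_\eps:=H+\eps u$, $0<\eps\le1$; since $|H_\eps-H|\le|u|$, the corresponding action functions, and hence the orbits of the semigroups $T_t^{-,\eps}$, stay bounded uniformly in $\eps$ over bounded $u$-ranges. As $\partial H_\eps/\partial u\ge\eps>0$, the Hamiltonian $H_\eps$ is admissible and $H_\eps(x,u,Du)=0$ has a viscosity solution $u_\eps$; these are uniformly bounded and equi-Lipschitz, so a subsequence converges uniformly as $\eps\to0$ to a viscosity solution of \eqref{hj} by stability of viscosity solutions, and (A) follows from (2)$\Rightarrow$(A).

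The main obstacle is the uniform-in-$t$ control of the long-time behaviour. The a priori estimates of \cite{WWY} for $h_{x_0,u_0}(x,t)$, and correspondingly for the orbits of $T_t^-$, carry constants growing like $e^{\lambda t}$, so by themselves they give no boundedness as $t\to+\infty$; each implication must inject extra structure to overcome this growth --- the order-sandwich between a sub- and a super-solution built by shifting a classical weak KAM solution in (A)$\Rightarrow$(2); the Gronwall trade-off against the classically bounded action of the frozen Tonelli Lagrangian $L(\cdot,a,\cdot)$ in (A)$\Rightarrow$(1); and the vanishing-parameter regularization $H+\eps u$ with $\eps$-uniform estimates in (1)$\Rightarrow$(A). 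Verifying that the $u$-values along (near-)minimizers stay confined, so that the comparison with the frozen Lagrangian is legitimate, and that the regularized solutions do not escape to $\pm\infty$, is the delicate point.
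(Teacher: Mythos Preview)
Your treatments of (A)$\Leftrightarrow$(2) and of (A)$\Rightarrow$(1) are sound. The paper argues via the cycle (A)$\Rightarrow$(1)$\Rightarrow$(2)$\Rightarrow$(A): your crossing-level comparison with the frozen Lagrangian $L(\cdot,a,\cdot)$ for (A)$\Rightarrow$(1) and your sign-of-critical-value argument for (2)$\Rightarrow$(A) coincide with the paper's Steps~1 and~3. For (A)$\Rightarrow$(2) you take a shortcut the paper does not: shifting a classical weak KAM solution $w_a$ to produce a subsolution $\phi_0\le a$ and a supersolution $\psi_0\ge a$, then using the order sandwich $\phi_0\le T_t^-\phi_0\le T_t^-\psi_0\le\psi_0$ to get monotone convergence of $T_t^-\phi_0$ to a fixed point. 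The paper instead passes through (1) and takes $\bar u=\liminf_{t\to\infty}h_{x_0,u_0}(\cdot,t)$; your route is a bit more direct but does not produce the boundedness statement (1) along the way.

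The genuine gap is in your (1)$\Rightarrow$(A). Granting that $H_\eps=H+\eps u$ is admissible and hence $H_\eps(x,u,Du)=0$ has a solution $u_\eps$, you have not justified that the $u_\eps$ are uniformly bounded. The bound $|H_\eps-H|\le\eps|u|$ controls the Hamiltonians only on bounded $u$-ranges and says nothing about long-time behaviour: along a fixed curve the difference $w=u^\eps_\gamma-u_\gamma$ obeys $\dot w=(\mu-\eps)w-\eps u_\gamma$ with $\mu\in[-\lambda,0]$, and extracting an $\eps$-uniform bound from this needs a separate argument. Nor can you simply invoke ``(A)$\Rightarrow$(1) for $H_\eps$'': the value $a_\eps$ with $c(H_\eps^{a_\eps})=0$ may escape to $\pm\infty$ as $\eps\to0$ (think of $H$ independent of $u$ with $c(H)\ne0$, where indeed $u_\eps\sim -c(H)/\eps$; hypothesis (1) is precisely what should preclude this, and that is the step you skipped). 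The regularization is in fact unnecessary. The paper closes the cycle by proving (1)$\Rightarrow$(2) directly: from $|h_{x_0,u_0}(x,t)|\le K$ on $M\times[\delta,\infty)$, the Markov identity $h_{x_0,u_0}(x,t)=\inf_{y\in M}h_{y,\,h_{x_0,u_0}(y,t-\delta)}(x,\delta)$ together with the local Lipschitz continuity of $(y,v,x)\mapsto h_{y,v}(x,\delta)$ on $M\times[-K,K]\times M$ makes $\{h_{x_0,u_0}(\cdot,t)\}_{t>2\delta}$ equi-Lipschitz, so $\bar u:=\liminf_{t\to\infty}h_{x_0,u_0}(\cdot,t)$ is a viscosity solution of \eqref{hj}; then your own (2)$\Rightarrow$(A) finishes.
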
 
Here $h_{x_0,u_0}(x,t)$ is the forward implicit action function. See Section 2 for its definition and properties. We give the proof of Theorem \ref{Ad} in Appendix.

Following Fathi \cite{Fat97a,Fat-b}, we study Aubry-Mather theory for contact Hamiltonian system \eqref{c} in the viewpoint of weak KAM theory. If $H(x,u,p)$ satisfies (H1)-(H3) and equation \eqref{hj} admits backward weak KAM solutions, then the  backward weak KAM solution is unique. See Proposition \ref{Un} in Section 2 for details.  
   By \cite{SWY},  backward weak KAM solutions and viscosity solutions are the same.
So, if $H(x,u,p)$ satisfies (H1)-(H3) and (A), by Theorem \ref{Ad} (2) equation (\ref{hj}) admits a {\em unique} backward weak KAM solution.
We use $\mathcal{S}_-$ (resp. $\mathcal{S}_+$) to denote the set of backward (resp. forward) weak KAM solutions of equation \eqref{hj}. Note that $\mathcal{S}_+$ is nonempty and may be not a singleton. We will explain this later. See Section 2 for definitions and properties of weak KAM solutions.

From now on, unless otherwise stated, we always assume (H1)-(H3) and (A). 
Denote by $u_-$ the unique backward weak KAM solution of equation \eqref{hj}, by $v_+$ an arbitrary forward weak KAM solution of equation \eqref{hj}.

We define a subset of $T^*M\times\R$ associated with $u_-$ by
\[
G_{u_-}:=\mathrm{cl}\Big(\big\{(x,u,p): x\ \text{is a point of differentiability of}\ u_-,\  u=u_-(x),\ p=Du_-(x)\big\}\Big),
\]
where $\mathrm{cl}(A)$ denotes the closure of $A\subset T^*M\times \R$.
  Similarly, for each $v_+\in \mathcal{S}_+$,  define a subset of $T^*M\times\R$ associated with $v_+$ by
\[
G_{v_+}:=\mathrm{cl}\Big(\big\{(x,v,p): x\ \text{is a point of differentiability of}\ v_+,\  v=v_+(x),\ p=Dv_+(x)\big\}\Big).
\]

It is a fact that both $u_-\in \mathcal{S}_-$  and $v_+\in \mathcal{S}_+$ are Lipschitz continuous. By the analogy of \cite{Be2}, $G_{u_-}$ and $G_{v_+}$ can be viewed as  {\it Legendrian pseudographs}. Let $\Phi_t$ denote the local flow of (\ref{c}) generated by $H(x,u,p)$.

\begin{Result}\label{one}
	The contact vector field generates a semi-flow $\Phi_{t}$ $(t\leq 0)$ on $G_{u_-}$ and a semi-flow $\Phi_t$ $(t\geq 0)$ on $G_{v_+}$. Moreover, for each $(x,u,p)\in G_{u_-}$, we have $H(x,u,p)=0$.
\end{Result}
Define
\[\tilde{\Sigma}:=\bigcap_{t\geq 0}\Phi_{-t}(G_{u_-})\quad \text{and} \quad \Sigma:=\pi\tilde{\Sigma},
\]
where $\pi:T^*M\times\R\rightarrow M$ denotes the orthogonal projection. 
It is a fact that $\tilde{\Sigma}$ is a non-empty, compact and $\Phi_t$-invariant subset of  $T^*M\times\R$.

In \cite{WWY1} we introduced two solution semigroups associated with contact Hamiltonian system \eqref{c}, denoted by $\{T^-_t\}_{t\geq 0}$ (resp. $\{T^+_t\}_{t\geq 0}$), called backward (resp. forward) solution semigroup, using which we can obtain a special   pair of weak KAM solutions.

\begin{Result}\label{pair11}
The uniform limit $\lim_{t\rightarrow +\infty}T^+_tu_-$ exists. Let $u_+=\lim_{t\rightarrow +\infty}T^+_tu_-$. Then 
\begin{itemize}
	\item $u_+\in \mathcal{S}_+$ and  $u_-=\lim_{t\rightarrow +\infty}T^-_tu_+$;
	\item $u_-\geq u_+$ and $u_-(x)=u_+(x)$ for each $x\in \Sigma$;
	\item $u_+$ is the maximal forward weak KAM solution, i.e.,  
\[
u_+(x)=\sup_{v_+\in \mathcal{S}_+}v_+(x),\quad  \forall x\in M. 
\]  
\end{itemize} 
\end{Result}
This result guarantees the non-emptiness of $\mathcal{S}_+$. From now on, we use $u_+$ to denote the maximal forward weak KAM solution. As mentioned above, generally speaking, $\mathcal{S}_+$ is not a singleton. See the following example.
\begin{example}\label{ex}
\begin{equation}\label{conterex}
u+\frac{1}{2}|Du|^2=0,\quad x\in\mathbb{T},
\end{equation}
where $\mathbb{T}:=(-\frac 12,\frac 12]$ denotes the unit circle. 
Let $u_1$ be the even 1-periodic extension of $u(x)=-\frac 12 x^2$ in $[0,\frac 12]$. 
Then both $u_1$ and $u_2\equiv 0$ are forward weak KAM solutions of equation \eqref{conterex}.
\end{example}

For each $v_+\in \mathcal{S}_+$, we define
\[\mathcal{I}_{v_+}:=\{x\in M\ |\ u_{-}(x)=v_{+}(x)\}.\]
By Main Result \ref{pair11}, it is clear that $\mathcal{I}_{u_+}$ is non-empty. The non-emptiness of $\mathcal{I}_{v_+}$ is a consequence of Corollary \ref{omeggaa}.
We will verify that both $u_-$ and $v_+$ are differentiable at $x\in\mathcal{I}_{v_+} $ and with the same derivative. Thus, one can define
\[
\tilde{\mathcal{I}}_{v_+}:=\{(x,u,p):\ x\in \mathcal{I}_{v_+},\ u=u_-(x)=v_+(x),\ p=Du_-(x)=Dv_+(x) \}.
\]

For the regularity of weak KAM solutions, we have 
\begin{Result}\label{ten}
$v_+$ and $u_-$ are of class $C^{1,1}$ on $\mathcal{I}_{v_+}$.
\end{Result}

Following Mather and Ma\~n\'e \cite{CDI,M1,Mvc,Mn3}, we define globally minimizing orbits and static orbits for contact Hamiltonian system \eqref{c} by using the implicit action functions introduced in \cite{WWY}. Aubry set $\tilde{\mathcal{A}}$ can be defined as the set of all static orbits.
We call $\mathcal{A}:=\pi\tilde{\mathcal{A}}$  the projected Aubry set, where $\pi:T^*M\times\R\rightarrow M$ denotes the orthogonal projection.

A contact counterpart of Mather's graph theorem \cite{M1} is the following result.
\begin{Result}\label{aubrys}
The projection $\pi:T^*M\times\R\rightarrow M$ induces a bi-Lipschitz homeomorphism from $\tilde{\mathcal{A}}$ to $\mathcal{A}$. Moreover, $\tilde{\mathcal{A}}$ is compact, flow $\Phi_t$-invariant and 
\[\tilde{\mathcal{A}}=\tilde{\Sigma}=\tilde{\mathcal{I}}_{u_+}=G_{u_-}\cap G_{u_+}.
\]
\end{Result}

So, Aubry set is non-empty and compact. 
In view of Main Results \ref{ten} and \ref{aubrys},  $u_-$ and $u_+$ are of class $C^{1,1}$ on $\mathcal{A}$. By a result of Krylov and Bogoliubov \cite{KB}, there exist Borel $\Phi_t$-invariant probability measures on $\tilde{\mathcal{A}}$. We call these measures Mather measures and denote by $\mathfrak{M}$ the set of Mather measures. Mather set of contact Hamiltonian system (\ref{c}) is defined by
\[\tilde{\mathcal{M}}=\mathrm{cl}\left(\bigcup_{\mu\in \mathfrak{M}}\text{supp}(\mu)\right),\]
where $\text{supp}(\mu)$ denotes the support of $\mu$. Since the set of recurrent points is dense in the supports of invariant measures, we have
\[\tilde{\mathcal{M}}=\mathrm{cl}\left(\text{Rec}\left(\Phi_t\big|_{\tilde{\mathcal{A}}}\right)\right),\]
where $\text{Rec}(S)$ denotes the set of recurrent points in $S$.
Based on Main Result  \ref{one} and Main Result \ref{aubrys}, one can obtain directly
\begin{corollary}\label{periooo}
If $M=\mathbb{T}$ or $M=\mathbb{S}^2$, then $\tilde{\mathcal{M}}$ is either a set of fixed points or a periodic orbit of $\Phi_t$.
\end{corollary}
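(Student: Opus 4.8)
The plan is to transfer the contact dynamics on $\tilde{\mathcal{A}}$ to a Lipschitz flow on the surface $M$ via the graph theorem and then to read off the recurrent behaviour from Poincar\'e--Bendixson theory. By Main Result~\ref{aubrys}, $\pi|_{\tilde{\mathcal{A}}}\colon\tilde{\mathcal{A}}\to\mathcal{A}:=\pi\tilde{\mathcal{A}}\subseteq M$ is a bi-Lipschitz homeomorphism and $\tilde{\mathcal{A}}\subseteq G_{u_-}\cap G_{u_+}$; together with Main Result~\ref{ten} and the identity $\mathcal{A}=\mathcal{I}_{u_+}$ (again Main Result~\ref{aubrys}), every point of $\tilde{\mathcal{A}}$ has the form $(x,u_-(x),Du_-(x))$ with $x\in\mathcal{A}$ and $u_-$ of class $C^{1,1}$ on $\mathcal{A}$. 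Conjugating $\Phi_t|_{\tilde{\mathcal{A}}}$ by $\pi$ yields a flow $\psi_t$ on $\mathcal{A}$ whose generator at $x\in\mathcal{A}$ is $V(x):=\frac{\partial H}{\partial p}\big(x,u_-(x),Du_-(x)\big)$, which is Lipschitz on $\mathcal{A}$ since $Du_-$ is and $H\in C^{3}$. Extending $V$ to a Lipschitz vector field $\widetilde V$ on $M$, uniqueness for the Lipschitz ODE shows that $\mathcal{A}$ is invariant under the flow $\widetilde\psi_t$ of $\widetilde V$ and $\widetilde\psi_t|_{\mathcal{A}}=\psi_t$; moreover a point $x\in\mathcal{A}$ with $\widetilde V(x)=0$ corresponds to a fixed point of $\Phi_t$, because then the whole orbit $\Phi_t(x,u_-(x),Du_-(x))$ sits over $x$. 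Thus $\tilde{\mathcal{M}}=\mathrm{cl}\big(\mathrm{Rec}(\Phi_t|_{\tilde{\mathcal{A}}})\big)$ is carried by $\pi$ onto $\mathrm{cl}\big(\{x\in\mathcal{A}:x\text{ is recurrent for }\widetilde\psi_t\}\big)$, and the problem reduces to describing the recurrent points lying in $\mathcal{A}$ of a Lipschitz flow on $M\in\{\mathbb{T},\mathbb{S}^2\}$. I would also record that $\tilde{\mathcal{A}}=\tilde\Sigma=\bigcap_{t\ge0}\Phi_{-t}(G_{u_-})$ is connected, being a nested intersection of the connected sets $\Phi_{-t}(G_{u_-})$ (each a homeomorphic copy of the connected Legendrian pseudograph $G_{u_-}$, nested by the semi-flow property of Main Result~\ref{one}); hence $\mathcal{A}$ is connected.

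For $M=\mathbb{T}$ the analysis is elementary. If $\widetilde V$ vanishes somewhere on $\mathbb{T}$, the flow is gradient-like on each complementary arc, so every $\omega$- and $\alpha$-limit set is a zero of $\widetilde V$; since $\tilde{\mathcal{A}}\neq\varnothing$ is invariant, $\mathcal{A}$ contains a zero of $\widetilde V$, and every recurrent point of $\widetilde\psi_t$ in $\mathcal{A}$ is such a zero, so $\tilde{\mathcal{M}}$ is a set of fixed points of $\Phi_t$ (the fixed-point set being closed). If $\widetilde V$ has no zero on $\mathbb{T}$, every orbit of $\widetilde\psi_t$ is periodic and exhausts $\mathbb{T}$, whence $\mathcal{A}=\mathbb{T}$ and $\tilde{\mathcal{M}}=\tilde{\mathcal{A}}$ is a single periodic orbit of $\Phi_t$.

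For $M=\mathbb{S}^2$ I would invoke the Poincar\'e--Bendixson theorem. If $x\in\mathcal{A}$ is recurrent and not a fixed point, then $\omega(x)\subseteq\mathcal{A}$ contains no fixed point --- otherwise $\omega(x)$ would be a cycle of fixed points and connecting orbits, and $x$, lying on a regular connecting orbit, would have $\omega(x)$ equal to a single fixed point, contradicting $x\in\omega(x)$ --- hence $\omega(x)$ is a periodic orbit, necessarily the orbit of $x$. So the recurrent points of $\widetilde\psi_t$ in $\mathcal{A}$ are fixed points of $\Phi_t$ together with points of periodic orbits contained in $\mathcal{A}$. The remaining and delicate step, which I expect to be the main obstacle, is to rule out coexistence of the two types, i.e.\ to prove that if $\mathcal{A}$ contains a periodic orbit $\Gamma$ then $\mathrm{Rec}(\widetilde\psi_t)\cap\mathcal{A}=\Gamma$. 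This should follow from a case analysis exploiting that $\Gamma$ separates $\mathbb{S}^2$ into two open disks, that a planar periodic orbit cannot be simultaneously attracting and repelling from the same side, the connectedness of $\mathcal{A}$, and the Poincar\'e--Bendixson structure of $\widetilde\psi_t$ inside each disk (the volume-contracting character of the contact flow, $\mathrm{div}\,X=-2\,\partial H/\partial u<0$, may also be brought in to control the side-stabilities of $\Gamma$). Granting this, $\mathrm{Rec}(\widetilde\psi_t)\cap\mathcal{A}$ is either contained in the closed fixed-point set, so $\tilde{\mathcal{M}}$ is a set of fixed points, or equals a single periodic orbit, so $\tilde{\mathcal{M}}$ is a periodic orbit of $\Phi_t$, which completes the proof.
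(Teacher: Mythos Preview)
The paper does not prove this corollary; it only asserts that it follows ``directly'' from Main Results~\ref{one} and~\ref{aubrys}. Your strategy of conjugating $\Phi_t|_{\tilde{\mathcal{A}}}$ to a Lipschitz flow on $\mathcal{A}\subset M$ via the graph theorem and then invoking Poincar\'e--Bendixson is exactly the intended mechanism, and your treatment of $M=\mathbb{T}$ is correct.

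That said, your proposal is explicitly incomplete: for $M=\mathbb{S}^2$ you isolate the right obstruction (ruling out two periodic orbits, or a periodic orbit coexisting with fixed points, inside $\mathrm{Rec}(\psi_t)\cap\mathcal{A}$) but do not resolve it, writing ``Granting this''. The tools you suggest do not obviously close the gap. The divergence of the contact vector field is $-(n+1)\,\partial H/\partial u$ (not $-2\,\partial H/\partial u$), and this contracts volume in $T^*M\times\mathbb{R}$; there is no reason the \emph{projected} flow on $\mathcal{A}\subset\mathbb{S}^2$ should contract area, so this says nothing direct about side-stabilities of a periodic $\Gamma\subset\mathcal{A}$. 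Your connectedness claim for $\tilde{\mathcal{A}}$ rests on $G_{u_-}$ being connected, which is unjustified and in fact can fail: for a semiconcave function with corners the reachable-gradient set at a corner is discrete, and the smooth arcs of the pseudograph need not link up (e.g.\ $u(x)=-|x|$ on $\mathbb{T}$ has $G_u$ equal to two disjoint arcs). Even granting connectedness of $\mathcal{A}$, it is unclear how to exclude two nested periodic orbits with the annulus between them also lying in $\mathcal{A}$.

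If the corollary is read in the weaker sense ``$\tilde{\mathcal{M}}$ is a union of fixed points and periodic orbits'' (which is consistent with the paper's own wording ``$\tilde{\mathcal{M}}$ consists of periodic orbits'' in Example~1.6), then your Poincar\'e--Bendixson argument already suffices and the uniqueness step is superfluous. If it is meant literally, the gap you flag is genuine, and neither your proposal nor the paper's one-line claim fills it.
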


\begin{example}[The dissipative pendulum]
Consider
	\[H(x,u,p):=\frac{1}{2}(p-p_0)^2+\cos x+u, \ \ p_0\notin [-1,1],\ \ (x,p,u)\in T^*\mathbb{T}\times\R.\]
The contact Hamilton's equations read
\begin{equation}\label{disppe}
\begin{cases}
\dot{x}=p-p_0,\\
\dot{p}=\sin x -p,\\
\dot{u}=\frac{1}{2}(p^2-p_0^2)-\cos x-u.
\end{cases}
\end{equation}
Since $p_0\notin [-1,1]$, then $\dot{x}$ and $\dot{p}$ do not vanish simultaneously. Hence, $H(x,u,p)$ does not admit fixed points. In this case, $H$ is admissible and $\tilde{\mathcal{M}}$ consists of periodic orbits of (\ref{disppe}).
\end{example}

For any $v_+\in \mathcal{S}_+$, we call
\[B_{v_+}(x):=u_-(x)-v_+(x)\]
the barrier function associated with $v_+$. Let $L(x,u,\dot{x})$ be defined by 
\[
L(x,u, \dot{x}):=\sup_{p\in T^*_xM}\{\langle \dot{x},p\rangle-H(x,u,p)\}.
\]
Then $L(x,u, \dot{x})$ and $H(x,u,p)$ are Legendre transforms of each other, depending on conjugate variables $\dot{x}$ and $p$ respectively.

\begin{Result}\label{lyap}
Given $x_0\in M$, let $\gm:[0,+\infty)\rightarrow M$ be a $(v_+, L, 0)$-calibrated curve\footnote{See the precise statements for calibrated curves in Definition \ref{bwkam}.} with $\gm(0)=x_0$.
Then $B_{v_+}(\gm(t))$ is non-negative and decreasing on $[0,+\infty)$. In particular, it is  strictly decreasing if $x_0\in M\backslash \mathcal{A}$.
\end{Result}

By virtue of Main Result \ref{aubrys} and Main Result \ref{lyap}, we obtain the following result on asymptotic behavior of calibrated curves directly.
\begin{corollary}\label{omeggaa}
Given $x_0\in M$, we have
\begin{itemize}
\item [(1)] let $\xi:(-\infty,0]\rightarrow M$ be a $(u_-, L, 0)$-calibrated curve with $\xi(0)=x_0$. Let  $u_0:=u_-(x_0)$, $p_0:=\frac{\partial L}{\partial \dot{x}}(x_0,u_0,\dot{\xi}(0)_-)$, where $\dot{\xi}(0)_-$ denotes the left derivative of $\xi(t)$ at $t=0$. Let $\alpha(x_0,u_0,p_0)$ be the $\alpha$-limit set of $(x_0,u_0,p_0)$. Then
\[\alpha(x_0,u_0,p_0)\subset \tilde{\mathcal{A}};\]
\item [(2)]
let $\eta:[0,+\infty)\rightarrow M$ be a $(v_+, L, 0)$-calibrated curve with $\eta(0)=x_0$. Let  $v_0:=v_+(x_0)$, $p_0:=\frac{\partial L}{\partial \dot{x}}(x_0,v_0,\dot{\eta}(0)_+)$, where $\dot{\eta}(0)_+$ denotes the right derivative of $\eta(t)$ at $t=0$. Let $\omega(x_0,v_0,p_0)$ be the $\omega$-limit set of $(x_0,v_0,p_0)$. Then
\[\omega(x_0,v_0,p_0)\subset \tilde{\mathcal{I}}_{v_+}\subset \tilde{\mathcal{A}},\]
\end{itemize}
where  $\omega(x_0,u_0,p_0)$ (resp. $\alpha(x_0,u_0,p_0)$) denotes the $\omega$ (resp. $\alpha$)-limit  set for $(x_0,u_0,p_0)$.
\end{corollary}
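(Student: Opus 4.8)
The plan is to lift each calibrated curve to an orbit of the contact flow $\Phi_t$ and then play the $\Phi_t$-invariance of its limit set against the identities $\tilde{\mathcal{A}}=\tilde{\Sigma}=G_{u_-}\cap G_{u_+}$ of Main Result~\ref{aubrys} and the barrier monotonicity of Main Result~\ref{lyap}. For (1): by differentiability of $u_-$ along calibrated curves, $\xi$ lifts to the orbit $t\mapsto(\xi(t),u_-(\xi(t)),Du_-(\xi(t)))=\Phi_t(x_0,u_0,p_0)$, $t\le 0$, which lies in $G_{u_-}$ by Main Result~\ref{one}; as $M$ is compact and $u_-$ Lipschitz this orbit is bounded, so $\alpha(x_0,u_0,p_0)$ is non-empty, compact, $\Phi_t$-invariant and contained in $G_{u_-}$. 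For $q\in\alpha(x_0,u_0,p_0)$ and $t\ge 0$, writing $\Phi_t(q)=\lim_n\Phi_{t+t_n}(x_0,u_0,p_0)$ with $t_n\to-\infty$ gives $\Phi_t(q)\in G_{u_-}$ (eventually $t+t_n\le 0$ and $G_{u_-}$ is closed), hence $q\in\bigcap_{t\ge 0}\Phi_{-t}(G_{u_-})=\tilde{\Sigma}=\tilde{\mathcal{A}}$. This settles (1).

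For (2): lift $\eta$ to the forward orbit $\Phi_t(x_0,v_0,p_0)\in G_{v_+}$, $t\ge 0$, again bounded, so $\omega(x_0,v_0,p_0)$ is non-empty, compact, $\Phi_t$-invariant and contained in $G_{v_+}$. By Main Result~\ref{lyap}, $t\mapsto B_{v_+}(\eta(t))$ is non-negative and decreasing, hence converges to some $\ell\ge 0$. For $q=(\bar x,\bar v,\bar p)\in\omega(x_0,v_0,p_0)$, writing $q=\lim_n\Phi_{t_n}(x_0,v_0,p_0)$ with $t_n\to+\infty$, continuity of $v_+$ gives $\bar v=v_+(\bar x)$, while $\pi\Phi_s(q)=\lim_n\eta(t_n+s)$ (with $t_n+s\to+\infty$) forces $B_{v_+}(\pi\Phi_s(q))=\ell$ for every $s$; so $B_{v_+}$ is constant along the projected orbit of $q$, which on $[0,+\infty)$ is a $(v_+,L,0)$-calibrated curve (projections of orbits on $G_{v_+}$ are $v_+$-calibrated, cf.\ Main Result~\ref{one} and Definition~\ref{bwkam}). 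Not being strictly decreasing, Main Result~\ref{lyap} gives $\bar x\in\mathcal{A}$. The remaining \emph{upgrade} to $q\in\tilde{\mathcal{I}}_{v_+}$ (so in particular $\ell=0$) uses the proof of Main Result~\ref{lyap}: convergence of $B_{v_+}\circ\eta$ makes its signed derivative integrable, so along a subsequence the quantity controlling the strict decrease tends to $0$, producing a limit point in $\tilde{\mathcal{I}}_{v_+}$ at which $B_{v_+}=0$, whence $\ell=0$; then $v_+(\bar x)=u_-(\bar x)$ and $Dv_+(\bar x)=Du_-(\bar x)$, i.e.\ $q\in\tilde{\mathcal{I}}_{v_+}$. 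Finally $\tilde{\mathcal{I}}_{v_+}\subset\tilde{\mathcal{A}}$: for $x\in\mathcal{I}_{v_+}$, glue a $u_-$-calibrated curve on $(-\infty,0]$ ending at $x$ to a $v_+$-calibrated curve on $[0,+\infty)$ starting at $x$ (they match, since $u_-$ and $v_+$ are differentiable at $x$ with the same derivative), obtaining a bi-infinite curve whose lift is a static orbit through $(x,u_-(x),Du_-(x))$.

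The main obstacle is precisely this \emph{upgrade} step in (2). Unlike the classical setting, $B_{v_+}$ need not vanish on all of $\mathcal{A}$ --- in Example~\ref{ex} one has $\mathcal{A}=\mathbb{T}$ but $\mathcal{I}_{u_1}=\{0\}$ --- so $\bar x\in\mathcal{A}$ alone is insufficient, and one must show the calibrated curve genuinely accumulates on $\mathcal{I}_{v_+}$; this needs a quantitative form of the barrier decay underlying Main Result~\ref{lyap}, together with $\tilde{\mathcal{I}}_{v_+}\subset\tilde{\mathcal{A}}$. Also requiring care is the assertion that the projected orbit through an $\omega$-limit point is a bona fide $(v_+,L,0)$-calibrated curve, which rests on the semiflow structure of $G_{v_+}$ (Main Result~\ref{one}) and stability of the calibration identity under limits. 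By contrast, part (1) and the reduction $\pi(\omega)\subset\mathcal{A}$ in (2) are routine once these structural facts are available.
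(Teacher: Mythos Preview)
Your treatment of part~(1) and the set-up for part~(2) --- lifting to an orbit on $G_{v_+}$, using Main Result~\ref{lyap} to get $B_{v_+}(\eta(t))\downarrow\ell\ge 0$, and showing $B_{v_+}\equiv\ell$ along the limit orbit $\bar x(\cdot)$, which is itself $(v_+,L,0)$-calibrated --- match the paper's proof exactly.

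The divergence is in the ``upgrade'' to $\ell=0$. Your integrability argument can be made to work (from $B_{v_+}(\eta(t'))-B_{v_+}(\eta(t))\le\int_t^{t'}Q(s)\,ds\le 0$ with $Q(s):=L(\eta,u_-(\eta),\dot\eta)-L(\eta,v_+(\eta),\dot\eta)$ one gets $Q\in L^1$, so $Q(t_n)\to 0$ along a subsequence, and then (L3) plus compactness force $B_{v_+}(\eta(t_n))\to 0$), but it is more labor than needed, and your detour through $\bar x\in\mathcal{A}$ is superfluous. The paper simply reruns the computation underlying Main Result~\ref{lyap} on the limit orbit itself: if $\ell>0$ then $u_-(\bar x(s))>v_+(\bar x(s))$ for every $s$, so (L3) gives
\[
v_+(\bar x(T))-v_+(\bar x(0))=\int_0^T L(\bar x,v_+(\bar x),\dot{\bar x})\,ds>\int_0^T L(\bar x,u_-(\bar x),\dot{\bar x})\,ds\ge u_-(\bar x(T))-u_-(\bar x(0)),
\]
i.e.\ $B_{v_+}(\bar x(0))>B_{v_+}(\bar x(T))$, contradicting constancy. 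In other words, the strict-decrease clause of Main Result~\ref{lyap} is really driven by the hypothesis $B_{v_+}>0$, not by $x\notin\mathcal{A}$; once you see this, $\ell=0$ is immediate and the subtlety you flag with Example~\ref{ex} dissolves.

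For $\tilde{\mathcal{I}}_{v_+}\subset\tilde{\mathcal{A}}$ the paper is also more economical: from $v_+\le u_+\le u_-$ one gets $\mathcal{I}_{v_+}\subset\mathcal{I}_{u_+}$, and then Lemma~\ref{iinv} (via Remark~\ref{rem4.10}) matches the derivatives, so $\tilde{\mathcal{I}}_{v_+}\subset\tilde{\mathcal{I}}_{u_+}=\tilde{\mathcal{A}}$ by Main Result~\ref{aubrys}. Your gluing construction is morally the same but would oblige you to verify the static condition~\eqref{3-3} directly, essentially reproving a piece of Main Result~\ref{aubrys}.
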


Based on the above results, in a forthcoming paper \cite{WWY2}, we 
will study the existence, regularity and representation formula for viscosity solutions of stationary Hamilton-Jacobi equation $\bar{H}(x,u,Du)=0$, and discuss the long time behavior of viscosity solutions of  evolutionary Hamilton-Jacobi equation $w_t+\bar{H}(x,w,w_x)=0$, where $\bar{H}$ satisfies Tonelli conditions with respect to the third argument and is strictly decreasing with respect to the second argument.

\subsection{Notation and Terminology}
At the end of this section, we introduce the notations and some basic concepts used in this paper.\\[2mm]
$\bullet$ We choose, once and for all, a $C^\infty$ Riemannian metric $g$ on $M$. It
is classical that there is a canonical way to associate to it a Riemannian metric on
$TM$ and $T^*M$, respectively. Denote by $d(\cdot,\cdot)$ the distance
function defined by $g$ on $M$. We use the same symbol $\|\cdot\|_x$ to denote the
norms induced by the Riemannian metrics on $T_xM$ and $T^*_xM$ for $x\in M$, and by
$\langle \cdot,\cdot\rangle_x$ the canonical pairing between the tangent space $T_xM$ and the cotangent space $T^*_xM$. $
\mathrm{diam}(M)$ denotes the diameter of $M$.\\[3mm]
$\bullet$ $\mathbb{R}^n=n$-dimensional real Euclidean space, $\mathbb{R}=
\mathbb{R}^1$, $\mathbb{R}_+=[0,+\infty)$ and $\mathbb{R}_-=(-\infty,0]$. Denote 
by $x\cdot y$ the Euclidean scalar product, and by $\|\cdot\|$ the 
usual norm in $\mathbb{R}^n$. \\[3mm]
$\bullet$ $C(M,\mathbb{R})$ stands for  the space of continuous functions on $M$, $\|\cdot\|_\infty$ denotes the supremum norm on it.\\[3mm] 
$\bullet$ Let $\Omega\subset \mathbb{R}^n$ be an open set. We say that a function $u:\Omega\to \mathbb{R}$ is semiconcave with linear modulus if it is continuous in $\Omega$ and there exists $C\geq 0$ such that
\[
u(x+h)+u(x-h)-2u(x)\leq C\|h\|^2
\]
for all $x$, $h\in\mathbb{R}^n$ such that $[x-h, x+h]\subset \Omega$, where $[x-h, x+h]$ denotes the segment with endpoints $x-h$ and $x+h$.  We say that a function $u:\Omega\to \mathbb{R}$ is semiconvex  if $-u$ is semiconcave.\\[3mm]
$\bullet$ Let $\Omega\subset \mathbb{R}^n$ be an open set. Let $u:\Omega\rightarrow\R$ be locally Lipschitz. A vector $p\in \R^n$ is called a reachable gradient of $u$ at $x\in \Omega$ if a sequence $\{x_n\}\subset \Omega\backslash\{x\}$ exists such that $u$ is differentiable at $x_k$ for each $k\in \mathbb{N}$, and $\lim_{k\rightarrow+\infty}x_k=x$, $\lim_{k\rightarrow+\infty}Du(x_k)=p$. We use $D^*u(x)$ to denote the set of all reachable gradients of $u$ at $x$.

\medskip

\noindent {\em Outline of the paper}

\medskip

In Section 2, we introduce some  preliminaries. We first give notions of static orbits, Aubry set and Mather set in Section 3. In Section 4 we give the proofs of Main Results 1-5.

%
%


\section{Preliminaries}

In this section we recall the definitions and some basic properties of implicit action functions, solution semigroups which come from implicit variational principles introduced in \cite{WWY} for contact Hamilton's equations (\ref{c}).  We refer the readers to \cite{CCY,ZC} for an equivalent formulation of the implicit variational principle, and its applications to vanishing contact structure for viscosity solutions of the corresponding Hamilton-Jacobi equation.

The contact Lagrangian $L(x,u, \dot{x})$ associated to $H(x,u,p)$ is defined by
\[
L(x,u, \dot{x}):=\sup_{p\in T^*_xM}\{\langle \dot{x},p\rangle-H(x,u,p)\}.
\]
By (H1)-(H3), we have:
\begin{itemize}
\item [\textbf{(L1)}] {\it Strict convexity}:  the Hessian  $\frac{\partial^2 L}{\partial {\dot{x}}^2} (x,u,\dot{x})$ is positive definite for all $(x,u,\dot{x})\in TM\times\R$;
    \item [\textbf{(L2)}] {\it Superlinearity}: for every $(x,u)\in M\times\R$, $L(x,u,\dot{x})$ is superlinear in $\dot{x}$;
\item [\textbf{(L3)}] {\it Moderate decreasing}: there is a constant $\lambda>0$ such that for every $(x,u,\dot{x})\in TM\times\R$,
                        \begin{equation*}
                        -\lambda\leq \frac{\partial L}{\partial u}(x,u,\dot{x})<0.
                        \end{equation*}
\end{itemize}

Some  results stated in the following still hold under weaker conditions than (H1)-(H3).
Unless otherwise stated, from now on to the end of Section 2, we always assume that $H$ satisfies (H1)-(H3) for the sake of simplicity.

\subsection{Implicit variational principles}

Recall implicit variational principles introduced in \cite{WWY} for contact Hamilton's equations (\ref{c}).
\begin{theorem}\label{IVP}
	For any given $x_0\in M$, $u_0\in\mathbb{R}$, there exist two continuous functions $h_{x_0,u_0}(x,t)$ and $h^{x_0,u_0}(x,t)$ defined on $M\times(0,+\infty)$ satisfying	
\begin{equation}\label{baacf1}
h_{x_0,u_0}(x,t)=u_0+\inf_{\substack{\gamma(0)=x_0 \\  \gamma(t)=x} }\int_0^tL\big(\gamma(\tau),h_{x_0,u_0}(\gamma(\tau),\tau),\dot{\gamma}(\tau)\big)d\tau,
\end{equation}
\begin{align}\label{2-3}
h^{x_0,u_0}(x,t)=u_0-\inf_{\substack{\gamma(t)=x_0 \\  \gamma(0)=x } }\int_0^tL\big(\gamma(\tau),h^{x_0,u_0}(\gamma(\tau),t-\tau),\dot{\gamma}(\tau)\big)d\tau,
\end{align}
where the infimums are taken among the Lipschitz continuous curves $\gamma:[0,t]\rightarrow M$.
Moreover, the infimums in (\ref{baacf1}) and \eqref{2-3} can be achieved. 
If $\gamma_1$ and $\gamma_2$ are curves achieving the infimums \eqref{baacf1} and \eqref{2-3} respectively, then $\gamma_1$ and $\gamma_2$ are of class $C^1$.
Let 
\begin{align*}
x_1(s)&:=\gamma_1(s),\quad u_1(s):=h_{x_0,u_0}(\gamma_1(s),s),\,\,\,\qquad  p_1(s):=\frac{\partial L}{\partial \dot{x}}(\gamma_1(s),u_1(s),\dot{\gamma}_1(s)),\\
x_2(s)&:=\gamma_2(s),\quad u_2(s):=h^{x_0,u_0}(\gamma_1(s),t-s),\quad   p_2(s):=\frac{\partial L}{\partial \dot{x}}(\gamma_2(s),u_2(s),\dot{\gamma}_2(s)).
\end{align*}
Then $(x_1(s),u_1(s),p_1(s))$ and $(x_2(s),u_2(s),p_2(s))$ satisfy equations \eqref{c} with 
\begin{align*}
x_1(0)=x_0, \quad x_1(t)=x, \quad \lim_{s\rightarrow 0^+}u_1(s)=u_0,\\
x_2(0)=x, \quad x_2(t)=x_0, \quad \lim_{s\rightarrow t^-}u_2(s)=u_0.
\end{align*}
\end{theorem}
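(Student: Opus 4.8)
The plan is to construct $h_{x_0,u_0}$ by an ODE--constrained variational method and then verify a posteriori that it solves the implicit equation \eqref{baacf1}; the function $h^{x_0,u_0}$ is obtained symmetrically by reversing time. Fix $x_0\in M$, $u_0\in\R$, and for an absolutely continuous curve $\gamma:[0,t]\to M$ with $\gamma(0)=x_0$ let $u_\gamma:[0,t]\to\R$ be the unique Carath\'eodory solution of $\dot u(\tau)=L(\gamma(\tau),u(\tau),\dot\gamma(\tau))$, $u(0)=u_0$; existence and uniqueness follow from (L3), which makes $L$ Lipschitz in $u$ with constant $\lambda$, together with the local integrability of $\tau\mapsto L(\gamma(\tau),0,\dot\gamma(\tau))$. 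Then set
\[
h_{x_0,u_0}(x,t):=\inf\{\,u_\gamma(t)\ :\ \gamma(0)=x_0,\ \gamma(t)=x\,\}.
\]
First I would record the elementary comparison principle for this ODE: along a fixed $\gamma$, if $v_1(\tau_0)\le v_2(\tau_0)$ then $v_1\le v_2$ on $[\tau_0,t]$, with the Gronwall estimate $v_2(\tau)-v_1(\tau)\le e^{\lambda(\tau-\tau_0)}(v_2(\tau_0)-v_1(\tau_0))$; this is the tool that lets one transfer information between "$L$ evaluated along $u_\gamma$" and "$L$ evaluated along $h$". Using (L2) (superlinearity) and (L3) one gets a priori bounds showing $h_{x_0,u_0}$ is finite and, via a Tonelli-type equi-Lipschitz estimate on minimizing sequences, continuous on $M\times(0,+\infty)$ with $h_{x_0,u_0}(\gamma(\tau),\tau)\to u_0$ as $\tau\to0^+$.

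Next I would prove existence of minimizers by the direct method. For fixed $(x,t)$, a minimizing sequence $\{\gamma_n\}$ is bounded in $H^1$ by superlinearity and (L3), so after passing to a subsequence $\gamma_n\rightharpoonup\gamma$ weakly in $H^1$ and uniformly, and the solutions $u_{\gamma_n}$ converge uniformly to some $\bar u$; applying the classical Tonelli lower semicontinuity theorem to the continuous, $\dot x$--convex integrand $L$ with the (now uniformly convergent) extra argument $u_{\gamma_n}$ gives $\bar u(\tau)-\bar u(\sigma)\ge\int_\sigma^\tau L(\gamma,\bar u,\dot\gamma)$, hence $\bar u\ge u_\gamma$ by the comparison principle, and therefore $u_\gamma(t)\le\bar u(t)=h_{x_0,u_0}(x,t)$, so $\gamma$ is a minimizer. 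The regularity claims then follow as in classical Tonelli theory: a minimizer has no corners by the du Bois-Reymond/Erdmann argument, (L1) upgrades it to $C^1$, and a bootstrap using that $H$ (hence $L$) is $C^3$ yields enough regularity to write down the equations. Defining $u_1(s)=h_{x_0,u_0}(\gamma_1(s),s)$ (which equals $u_{\gamma_1}(s)$, see below) and $p_1(s)=\frac{\partial L}{\partial\dot x}(\gamma_1(s),u_1(s),\dot\gamma_1(s))$, the contact Euler--Lagrange equation satisfied by the minimizer becomes, under the Legendre transform, exactly the system \eqref{c}, with the stated endpoint and limit conditions built into the construction.

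The heart of the matter is the implicit identity \eqref{baacf1}, and here I would argue in two directions. For an arbitrary admissible $\gamma$, set $\phi(\tau)=h_{x_0,u_0}(\gamma(\tau),\tau)$; a sub-optimality argument (concatenate a near-optimal curve on $[0,\tau']$ with $\gamma|_{[\tau',\tau]}$ and propagate the gain by the comparison principle) shows $\phi(\tau)\le v(\tau)$ whenever $v$ solves $\dot v=L(\gamma,v,\dot\gamma)$ on $[\tau',\tau]$ with $v(\tau')=\phi(\tau')$, and a Dini-derivative argument then upgrades this to $\psi(t)\ge\phi(t)$, where $\psi(\tau)=u_0+\int_0^\tau L(\gamma(s),\phi(s),\dot\gamma(s))\,ds$; that is,
\[
u_0+\int_0^tL\big(\gamma(\tau),h_{x_0,u_0}(\gamma(\tau),\tau),\dot\gamma(\tau)\big)\,d\tau\ \ge\ h_{x_0,u_0}(x,t)
\]
for every curve $\gamma$ from $x_0$ to $x$. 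Conversely, along a minimizer $\gamma_1$ one must have $u_{\gamma_1}(\tau)=h_{x_0,u_0}(\gamma_1(\tau),\tau)$ for all $\tau$: the inequality $h_{x_0,u_0}(\gamma_1(\tau),\tau)\le u_{\gamma_1}(\tau)$ is trivial, and strict inequality at some $\tau_0$ would let one splice in a strictly better curve on $[0,\tau_0]$ and, by the comparison principle, beat $\gamma_1$ at time $t$, contradicting minimality. Hence $\phi_1(\tau):=h_{x_0,u_0}(\gamma_1(\tau),\tau)$ satisfies $\dot\phi_1=L(\gamma_1,\phi_1,\dot\gamma_1)$ exactly, so the integral above evaluated at $\gamma_1$ equals $\phi_1(t)=h_{x_0,u_0}(x,t)$; the infimum in \eqref{baacf1} is therefore attained at $\gamma_1$ and equals $h_{x_0,u_0}(x,t)$. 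The identity \eqref{2-3} for $h^{x_0,u_0}$ is proved by the same scheme after the time reversal $\tau\mapsto t-\tau$, the overall minus sign accounting for the fact that the competing curves run from $x$ at time $0$ to $x_0$ at time $t$.

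I expect the main obstacle to be precisely this last, circular--looking step: because the cost functional depends on the very function $h$ being defined, one cannot invoke classical lower semicontinuity or Bellman's principle directly, and the construction must instead be routed through the ODE reformulation, with the comparison principle and Gronwall estimates converting the implicit functional equation into ordinary differential inequalities. A secondary technical point is making the sub-optimality inequality $\psi(t)\ge\phi(t)$ rigorous when $\gamma$ is merely Lipschitz (handled by a Dini--derivative monotonicity argument rather than pointwise differentiation), and ensuring that all the a priori estimates are uniform enough to yield joint continuity of $h_{x_0,u_0}$ on $M\times(0,+\infty)$ and the $C^1$ regularity (and the validity of \eqref{c}) for the minimizing curves.
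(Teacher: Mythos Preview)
The paper does not prove this theorem: it is stated in Section~2.1 as a recollection of the implicit variational principle established in the authors' earlier work \cite{WWY} (and \cite{WWY1}), so there is no ``paper's own proof'' to compare against. Your outline is essentially the approach taken in \cite{WWY}: the circularity of \eqref{baacf1} is broken by first defining, for each admissible curve $\gamma$, the Carath\'eodory solution $u_\gamma$ of $\dot u=L(\gamma,u,\dot\gamma)$, $u(0)=u_0$ (global existence from $|\partial L/\partial u|\le\lambda$ and Gronwall), setting $h_{x_0,u_0}(x,t)=\inf_\gamma u_\gamma(t)$, and then verifying the implicit identity a~posteriori via the comparison principle and a dynamic-programming/splicing argument. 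The contact Euler--Lagrange equations (hence \eqref{c} after the Legendre transform) come from a first-variation computation on the functional $\gamma\mapsto u_\gamma(t)$, exactly as you indicate.

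Two small technical remarks. First, for a general Tonelli $L$ the direct method does not give $H^1$ bounds on a minimizing sequence but only equi-absolute continuity (Tonelli's theorem) via superlinearity; you should phrase the compactness step accordingly. Second, in your proof of the inequality $\psi(t)\ge\phi(t)$ for arbitrary Lipschitz $\gamma$, the cleanest route is the one you hint at: use the Markov-type sub-optimality $h_{x_0,u_0}(\gamma(\tau),\tau)\le h_{\gamma(\tau'),h_{x_0,u_0}(\gamma(\tau'),\tau')}(\gamma(\tau),\tau-\tau')$, rewrite the right side via the ODE along $\gamma|_{[\tau',\tau]}$, and pass to the limit; this avoids differentiating $\phi$ directly. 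With those adjustments your plan is correct and matches the argument in the cited reference.
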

We call $h_{x_0,u_0}(x,t)$ (resp. $h^{x_0,u_0}(x,t)$) a forward (resp. backward) implicit action function associated with $L$
and the curves achieving the infimums in (\ref{baacf1}) (resp. \eqref{2-3}) minimizers of $h_{x_0,u_0}(x,t)$ (resp. $h^{x_0,u_0}(x,t)$). 
The relation between forward and backward implicit action functions is as follows: for any given $x_0$, $x\in M$, $u_0$, $u\in\mathbb{R}$ and $t>0$,  $h_{x_0,u_0}(x,t)=u$ if and only if $h^{x,u}(x_0,t)=u_0$.

\subsection{Implicit action functions}
We now collect some basic properties of the implicit action functions. See \cite{WWY,WWY1} for these properties.

\medskip

\noindent $\bullet$ Properties forward  implicit action function $h_{x_0,u_0}(x,t)$.	
	\begin{itemize}
		\item [(1)] ({\it Monotonicity}).
		Given $x_0\in M$, $u_0$, $u_1$, $u_2\in\mathbb{R}$, contact Lagrangians $L_1$ and $L_2$ satisfying (L1)-(L3), 		\begin{itemize}
			\item [(i)]
			if $u_1<u_2$, then $h_{x_0,u_1}(x,t)<h_{x_0,u_2}(x,t)$, for all $(x,t)\in M\times (0,+\infty)$;
			\item [(ii)]if $L_1<L_2$, then $h^{L_1}_{x_0,u_0}(x,t)<h^{L_2}_{x_0,u_0}(x,t)$, for all $(x,t)\in M\times (0,+\infty)$, where $h^{L_i}_{x_0,u_0}(x,t)$ denotes the forward implicit action function associated with $L_i$, $i=1,2$.
		\end{itemize}
		\item [(2)] ({\it Minimality}).
		Given $x_0$, $x\in M$, $u_0\in\mathbb{R}$ and $t>0$, let
		$S^{x,t}_{x_0,u_0}$ be the set of the solutions $(x(s),u(s),p(s))$ of (\ref{c}) on $[0,t]$ with $x(0)=x_0$, $x(t)=x$, $u(0)=u_0$.
		Then
		\[
		h_{x_0,u_0}(x,t)=\inf\{u(t): (x(s),u(s),p(s))\in S^{x,t}_{x_0,u_0}\}, \quad \forall (x,t)\in M\times(0,+\infty).
		\]
		\item [(3)] ({\it Lipschitz continuity}).
		The function $(x_0,u_0,x,t)\mapsto h_{x_0,u_0}(x,t)$ is locally Lipschitz  continuous on $M\times\R\times M\times(0,+\infty)$.
		\item [(4)] ({\it Markov property}).
		Given $x_0\in M$, $u_0\in\mathbb{R}$, we have
		\[
		h_{x_0,u_0}(x,t+s)=\inf_{y\in M}h_{y,h_{x_0,u_0}(y,t)}(x,s)
		\]
		for all  $s$, $t>0$ and all $x\in M$. Moreover, the infimum is attained at $y$ if and only if there exists a minimizer $\gamma$ of $h_{x_0,u_0}(x,t+s)$ with $\gamma(t)=y$.
		\item [(5)] ({\it Reversibility}).
		Given $x_0$, $x\in M$ and $t>0$, for each $u\in \mathbb{R}$, there exists a unique $u_0\in \mathbb{R}$ such that
		\[
		h_{x_0,u_0}(x,t)=u.
		\]
	\end{itemize}

\medskip

\noindent $\bullet$ Properties of backward implicit action function $h^{x_0,u_0}(x,t)$.

\begin{itemize}
\item [(1)]({\it Monotonicity}).
Given $x_0\in M$ and $u_1$, $u_2\in\mathbb{R}$, contact Lagrangians $L_1$, $L_2$ satisfying (L1)-(L3),
\begin{itemize}
\item [(i)]
if $u_1<u_2$, then $h^{x_0,u_1}(x,t)<h^{x_0,u_2}(x,t)$, for all $(x,t)\in M\times (0,+\infty)$;
\item [(ii)]if $L_1<L_2$, then $h_{L_1}^{x_0,u_0}(x,t)<h_{L_2}^{x_0,u_0}(x,t)$, for all $(x,t)\in M\times (0,+\infty)$, where $h_{L_i}^{x_0,u_0}(x,t)$ denotes the backward implicit action function associated with $L_i$, $i=1,2$.
\end{itemize}
\item [(2)] ({\it Maximality}).
Given $x_0$, $x\in M$, $u_0\in\mathbb{R}$ and $t>0$, let
$S_{x,t}^{x_0,u_0}$ be the set of the solutions $(x(s),u(s),p(s))$ of  (\ref{c}) on $[0,t]$ with $x(0)=x$, $x(t)=x_0$, $u(t)=u_0$.
Then
\[
h^{x_0,u_0}(x,t)=\sup\{u(0): (x(s),u(s),p(s))\in S_{x,t}^{x_0,u_0}\}, \quad \forall (x,t)\in M\times(0,+\infty).
\]
\item [(3)] ({\it Lipschitz continuity}).
The function $(x_0,u_0,x,t)\mapsto h^{x_0,u_0}(x,t)$ is locally Lipschitz  continuous on $M\times\R\times M\times(0,+\infty)$.
\item [(4)] ({\it Markov property}).
Given $x_0\in M$, $u_0\in\mathbb{R}$, we have
\[
h^{x_0,u_0}(x,t+s)=\sup_{y\in M}h^{y,h^{x_0,u_0}(y,t)}(x,s)
\]
for all  $s$, $t>0$ and all $x\in M$. Moreover, the supremum is attained at $y$ if and only if there exists a minimizer $\gamma$ of $h^{x_0,u_0}(x,t+s)$, such that $\gamma(t)=y$.
\item [(5)] ({\it Reversibility}).
Given $x_0$, $x\in M$, and $t>0$, for each $u\in \mathbb{R}$, there exists a unique $u_0\in \mathbb{R}$ such that
\[
h^{x_0,u_0}(x,t)=u.
\]
\end{itemize}

%
%

\subsection{Solution semigroups}
Let us recall two  semigroups of operators introduced in \cite{WWY1}.  Define a family of nonlinear operators $\{T^-_t\}_{t\geq 0}$ from $C(M,\mathbb{R})$ to itself as follows. For each $\varphi\in C(M,\mathbb{R})$, denote by $(x,t)\mapsto T^-_t\varphi(x)$ the unique continuous function on $ (x,t)\in M\times[0,+\infty)$ such that

\[
T^-_t\varphi(x)=\inf_{\gamma}\left\{\varphi(\gamma(0))+\int_0^tL(\gamma(\tau),T^-_\tau\varphi(\gamma(\tau)),\dot{\gamma}(\tau))d\tau\right\},
\]
where the infimum is taken among the absolutely continuous curves $\gamma:[0,t]\to M$ with $\gamma(t)=x$.  Let $\gamma$ be a curve achieving the infimum, and $x(s):=\gamma(s)$, $u(s):=T_t^-\varphi(x(s))$, $p(s):=\frac{\partial L}{\partial \dot{x}}(x(s),u(s),\dot{x}(s))$.
Then $(x(s),u(s),p(s))$ satisfies equations (\ref{c}) with $x(t)=x$.

In \cite{WWY1} we proved that $\{T^-_t\}_{t\geq 0}$ is a semigroup of operators and the function $(x,t)\mapsto T^-_t\varphi(x)$ is a viscosity solution of $w_t+H(x,w,w_x)=0$ with $w(x,0)=\varphi(x)$. Thus, we call $\{T^-_t\}_{t\geq 0}$ the \emph{backward solution semigroup}.

Similarly, one can define another semigroup of operators $\{T^+_t\}_{t\geq 0}$, called the \emph{forward solution semigroup}, by

\begin{equation*}\label{fixufor}
T^+_t\varphi(x)=\sup_{\gamma}\left\{\varphi(\gamma(t))-\int_0^tL(\gamma(\tau),T^+_{t-\tau}\varphi(\gamma(\tau)),\dot{\gamma}(\tau))d\tau\right\},
\end{equation*}
where the infimum is taken among the absolutely continuous curves $\gamma:[0,t]\to M$ with $\gamma(0)=x$. Let $\gamma$ be a curve achieving the infimum, and $x(s):=\gamma(s)$, $u(s):=T_{t-s}^+\varphi(x(s))$, $p(s):=\frac{\partial L}{\partial \dot{x}}(x(s),u(s),\dot{x}(s))$.
Then $(x(s),u(s),p(s))$ satisfies equations (\ref{c}) with $x(0)=x$.

We now collect several basic properties of the semigroups. See \cite{SWY,WWY1} for details.

\begin{proposition}\label{pr4.1}
	Given $\varphi$, $\psi\in C(M,\mathbb{R})$,  we have
	\begin{itemize}
	\item [(1)]({Monotonicity}). If $\psi<\varphi$, then $T^{\pm}_t\psi< T^{\pm}_t\varphi$, \quad $\forall t\geq 0$.
	\item [(2)]({Local Lipschitz continuity}). The function $(x,t)\mapsto T^{\pm}_t\varphi(x)$ is locally Lipschitz on $M\times (0,+\infty)$.
	\item[(3)]({1-Lipschitz continuity of $T_t^-$}). $\|T_t^-\varphi-T_t^-\psi\|_\infty\leq \|\varphi-\psi\|_\infty$, \quad $\forall t\geq 0$.
         \item[(4)]($e^{\lambda t}$-Lipschitz continuity  of $T_t^+$). $\|T_t^+\varphi-T_t^+\psi\|_\infty\leq e^{\lambda t}\|\varphi-\psi\|_\infty$, \quad $\forall t\geq 0$.
	\item[(5)] ({Continuity at the origin}). $\lim_{t\rightarrow0^+}T_t^{\pm}\varphi=\varphi$.
	\end{itemize}
\end{proposition}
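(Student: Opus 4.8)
The plan is to reduce every item of Proposition~\ref{pr4.1} to the properties of the implicit action functions recorded in Section~2.2, through the representation formulas
\[
T^-_t\varphi(x)=\inf_{y\in M}h_{y,\varphi(y)}(x,t),\qquad T^+_t\varphi(x)=\sup_{y\in M}h^{y,\varphi(y)}(x,t)\qquad(t>0),
\]
which are a direct consequence of the implicit variational principle (Theorem~\ref{IVP}) together with the Markov property; the infimum and supremum are attained because $M$ is compact and $y\mapsto h_{y,\varphi(y)}(x,t)$ is continuous. The single auxiliary estimate I would establish first is a quantitative monotonicity of the action functions in their initial value: for $c\ge 0$ and $t>0$,
\[
ce^{-\lambda t}\le h_{y,u_0+c}(x,t)-h_{y,u_0}(x,t)\le c,\qquad c\le h^{y,u_0+c}(x,t)-h^{y,u_0}(x,t)\le ce^{\lambda t}.
\]
The first pair comes from a Gronwall comparison for the $u$-component along a minimizing curve: for a fixed curve $\gamma$ from $y$ to $x$, the value is $v_\gamma(t)$ where $\dot v_\gamma=L(\gamma,v_\gamma,\dot\gamma)$ with the prescribed initial value, and $h_{y,u_0}(x,t)=\inf_\gamma v_\gamma(t)$; the difference $\delta$ of two such solutions with initial data $c$ apart satisfies, by the mean value theorem and (L3), $-\lambda\delta\le\dot\delta<0$ while $\delta>0$ (which persists by uniqueness), so $ce^{-\lambda\tau}\le\delta(\tau)\le c$, and one passes to the infimum over $\gamma$ (using a minimizer of the larger problem for the lower bound). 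The second pair then follows from the duality $h_{x_0,u_0}(x,t)=u\Leftrightarrow h^{x,u}(x_0,t)=u_0$: the backward action function is, in the superscript variable, the inverse of a map whose difference quotients lie in $[e^{-\lambda t},1]$, so its own difference quotients lie in $[1,e^{\lambda t}]$.

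Granting this, the individual items are short. For (1): if $\psi<\varphi$ on the compact $M$ then $\psi\le\varphi-\eps$ for some $\eps>0$, so $h_{y,\varphi(y)}(x,t)-h_{y,\psi(y)}(x,t)\ge\eps e^{-\lambda t}>0$ and $h^{y,\varphi(y)}(x,t)-h^{y,\psi(y)}(x,t)\ge\eps>0$ for every $y$, and passing to the infimum (resp.\ supremum) gives $T^-_t\varphi(x)\ge T^-_t\psi(x)+\eps e^{-\lambda t}$ and $T^+_t\varphi(x)\ge T^+_t\psi(x)+\eps$ (the case $t=0$ being trivial). For (5): the upper bound $T^-_t\varphi(x)\le h_{x,\varphi(x)}(x,t)$ tends to $\varphi(x)$ as $t\to0^+$ since along the constant curve at $x$ the $u$-component returns to $\varphi(x)$; for the lower bound one uses superlinearity (L2) — from $L(z,w,\dot z)\ge R|\dot z|-C_R$ for every $R>0$ one sees that a curve realizing (nearly) $T^-_t\varphi(x)$ must have $\gamma(0)$ within $O(t)$ of $x$, and then the implicit variational equation and the continuity of $\varphi$ force $T^-_t\varphi(x)\to\varphi(x)$; the argument for $T^+_t$ is symmetric. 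Item (2) is immediate from the local Lipschitz continuity of $(x_0,u_0,x,t)\mapsto h_{x_0,u_0}(x,t)$: on any compact subset of $M\times(0,+\infty)$ this map has a Lipschitz constant uniform over the compact set $\{(y,\varphi(y)):y\in M\}$, and an infimum or supremum of a family of functions sharing a Lipschitz constant is Lipschitz with that constant.

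For (3) and (4) put $c=\|\varphi-\psi\|_\infty$. Using the representation formula, monotonicity, and the estimates above,
\[
T^-_t\varphi(x)\le\inf_{y}h_{y,\psi(y)+c}(x,t)\le\inf_{y}\bigl(h_{y,\psi(y)}(x,t)+c\bigr)=T^-_t\psi(x)+c,
\]
and symmetrically, so $\|T^-_t\varphi-T^-_t\psi\|_\infty\le c$; the identical computation with $h^{y,\cdot}$ produces the factor $e^{\lambda t}$ in place of $1$, giving $\|T^+_t\varphi-T^+_t\psi\|_\infty\le e^{\lambda t}c$. (Alternatively, one may argue via viscosity theory: $(x,t)\mapsto T^-_t\psi(x)+c$ is a viscosity supersolution of $w_t+H(x,w,w_x)=0$ because $\partial_uH\ge0$, and the comparison principle yields $T^-_t\varphi\le T^-_t\psi+c$; for $T^+_t$ one passes to the time-reversed equation, whose Hamiltonian is non-increasing in $u$, and the $e^{\lambda t}$ loss is the resulting Gronwall factor.) The one genuinely delicate point is the Gronwall comparison behind the quantitative monotonicity estimates — in particular the sharpness of the constants $1$ and $e^{\lambda t}$ — which is precisely where the two-sided bound in (H3) is used; everything else is routine given the implicit variational principle and the compactness of $M$, and the full details can be found in \cite{SWY,WWY1}.
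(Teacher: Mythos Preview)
The paper does not actually prove Proposition~\ref{pr4.1}; it merely records these properties and sends the reader to \cite{SWY,WWY1}. Your sketch is correct and is exactly the argument carried out in those references: one passes through the representation formulas $T^-_t\varphi(x)=\inf_y h_{y,\varphi(y)}(x,t)$ and $T^+_t\varphi(x)=\sup_y h^{y,\varphi(y)}(x,t)$ (which the paper states separately as Proposition~\ref{pr4.2}), and the core estimate is the Gronwall comparison for the $u$-component along a minimizing curve, giving the two-sided bounds on $h_{y,u_0+c}-h_{y,u_0}$ and (by the duality/reversibility relation) on $h^{y,u_0+c}-h^{y,u_0}$, from which (1), (3), (4) drop out; (2) and (5) then follow from the Lipschitz regularity of the action functions and superlinearity, as you indicate.
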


\begin{remark}
	It is notable that Proposition \ref{pr4.1} (3) is proved in \cite{SWY} under assumptions (H1), (H2) and (H3'). In fact, under assumptions (H1)-(H3), we have
	\[
	\|T_t^-\varphi-T_t^-\psi\|_\infty<\|\varphi-\psi\|_\infty, \quad \forall t> 0,\quad \forall\varphi\neq \psi.
	\]
We will prove the above inequality in Proposition	\ref{Un}.
	\end{remark}

See the following proposition for the relationship between solution semigroups and implicit action functions.
\begin{proposition}\label{pr4.2}
	Given any $\varphi\in C(M,\mathbb{R})$, $x_0\in M$ and $u_0\in\mathbb{R}$, we have
	\begin{itemize}
		\item [(1)] $
	T^{-}_t\varphi(x)=\inf_{y\in M}h_{y,\varphi(y)}(x,t)$, \,
	$T^{+}_t\varphi(x)=\sup_{y\in M}h^{y,\varphi(y)}(x,t)$,\,
	$\forall (x,t)\in M\times(0,+\infty).$
	\item [(2)] $T^-_sh_{x_0,u_0}(x,t)=h_{x_0,u_0}(x,t+s)$,\, $T^+_sh^{x_0,u_0}(x,t)=h^{x_0,u_0}(x,t+s)$,\, $\forall s, t>0$, $\forall x\in M.$
	\end{itemize}	
\end{proposition}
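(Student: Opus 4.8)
The plan is to prove the two families of identities by combining the variational characterizations of the solution semigroups with the Markov property of the implicit action functions. For part (1), consider $T^-_t\varphi(x)=\inf_\gamma\{\varphi(\gamma(0))+\int_0^tL(\gamma,T^-_\tau\varphi(\gamma),\dot\gamma)d\tau\}$ over curves $\gamma$ with $\gamma(t)=x$. I would split the infimum over curves by first fixing the starting point $y=\gamma(0)$; the inner infimum over curves from $y$ to $x$ then produces $h_{y,\varphi(y)}(x,t)$ once one checks that $\tau\mapsto T^-_\tau\varphi(\gamma(\tau))$ and $\tau\mapsto h_{y,\varphi(y)}(\gamma(\tau),\tau)$ solve the same constrained relation along the minimizing curve. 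This matching follows from the uniqueness built into the implicit variational principle (Theorem \ref{IVP}) together with the defining equation \eqref{baacf1}: both functions satisfy the same fixed-point equation with the same boundary data $\varphi(y)$ at $\tau=0$, so they coincide along the curve. Hence $T^-_t\varphi(x)=\inf_{y\in M}h_{y,\varphi(y)}(x,t)$. The forward identity $T^+_t\varphi(x)=\sup_{y\in M}h^{y,\varphi(y)}(x,t)$ is dual: one uses the $\sup$-formulation of $T^+_t$, fixes the endpoint $y=\gamma(t)$, and invokes \eqref{2-3} together with the maximality property of $h^{y,\varphi(y)}$ in the same way, being careful that all the infima/suprema are attained (which is guaranteed by Theorem \ref{IVP} and the Tonelli conditions).

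For part (2), the identity $T^-_sh_{x_0,u_0}(x,t)=h_{x_0,u_0}(x,t+s)$ should follow by applying part (1) with $\varphi(\cdot)=h_{x_0,u_0}(\cdot,t)$, which gives $T^-_sh_{x_0,u_0}(x,t)=\inf_{y\in M}h_{y,h_{x_0,u_0}(y,t)}(x,s)$, and then recognizing the right-hand side as exactly the Markov property (property (4) of $h_{x_0,u_0}$): $h_{x_0,u_0}(x,t+s)=\inf_{y\in M}h_{y,h_{x_0,u_0}(y,t)}(x,s)$. One small point to address is that part (1) is stated for $\varphi\in C(M,\R)$, so I need $x\mapsto h_{x_0,u_0}(x,t)$ to be continuous for fixed $t>0$, which is exactly the Lipschitz continuity property (3) of $h_{x_0,u_0}$. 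The forward statement $T^+_sh^{x_0,u_0}(x,t)=h^{x_0,u_0}(x,t+s)$ is proved symmetrically using the forward part of (1) and the Markov property (4) of $h^{x_0,u_0}$.

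The main obstacle, and the step deserving the most care, is the matching argument in part (1): showing that along a minimizing curve $\gamma$, the value $T^-_\tau\varphi(\gamma(\tau))$ agrees with $h_{\gamma(0),\varphi(\gamma(0))}(\gamma(\tau),\tau)$ for all $\tau\in(0,t]$, so that the nonlinear (implicit) dependence of $L$ on its second argument is handled consistently on both sides. The clean way to do this is to argue by the uniqueness in the implicit variational principle: restrict a minimizer $\gamma$ realizing $T^-_t\varphi(x)$ to $[0,\tau]$, observe it must realize the corresponding infimum defining $h_{\gamma(0),\varphi(\gamma(0))}(\gamma(\tau),\tau)$ (otherwise one could strictly lower $T^-_t\varphi(x)$, using the monotonicity property (1)(i) of $h_{x_0,u_0}$ in $u_0$ to propagate the improvement forward), and conversely that concatenating an optimal curve for $h_{y,\varphi(y)}(x,t)$ gives an admissible competitor for $T^-_t\varphi(x)$. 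Once both inequalities are in place the identity follows. Everything else is bookkeeping with the properties already collected in Section 2.
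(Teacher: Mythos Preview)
The paper does not give its own proof of this proposition; it is recorded as a known fact with the pointer ``See \cite{SWY,WWY1} for details.'' So there is nothing in the paper to compare your argument against directly, and I will comment on the proposal on its own terms.

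Part (2) is fine: once (1) is in hand, applying it with $\varphi=h_{x_0,u_0}(\cdot,t)$ and invoking the Markov property (4) of $h_{x_0,u_0}$ (resp.\ $h^{x_0,u_0}$) is exactly right, and the continuity of $x\mapsto h_{x_0,u_0}(x,t)$ needed to feed into (1) is indeed the Lipschitz property (3).

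For part (1), the overall architecture is correct, but the justification of the ``matching'' step is not. You write that $\tau\mapsto T^-_\tau\varphi(\gamma(\tau))$ and $\tau\mapsto h_{y,\varphi(y)}(\gamma(\tau),\tau)$ ``satisfy the same fixed-point equation with the same boundary data'', and then appeal to the uniqueness in Theorem~\ref{IVP}. That is not literally true: the fixed-point relations defining $T^-_\tau\varphi$ and $h_{y,\varphi(y)}(\cdot,\tau)$ are different variational problems (infimum over all curves ending at the point vs.\ infimum over curves with prescribed initial point $y$), so the uniqueness clause of Theorem~\ref{IVP} does not identify them. Your concatenation argument at the end has a related circularity: to conclude that the restriction of a $T^-_t\varphi$-minimizer to $[0,\tau]$ realizes the infimum in \eqref{baacf1}, you would already need $T^-_\sigma\varphi(\gamma(\sigma))=h_{y,\varphi(y)}(\gamma(\sigma),\sigma)$ on $[0,\tau]$, since that is what appears as the second argument of $L$ in \eqref{baacf1}.

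The clean fix is to prove the two inequalities separately by a Gronwall comparison, in the spirit of the arguments used later in the paper (e.g.\ Lemma~\ref{ufixplus} and Lemma~\ref{iinv}). For $T^-_t\varphi(x)\le h_{y,\varphi(y)}(x,t)$: take a minimizer $\gamma$ of $h_{y,\varphi(y)}(x,t)$, set $u(\tau)=T^-_\tau\varphi(\gamma(\tau))$ and $v(\tau)=h_{y,\varphi(y)}(\gamma(\tau),\tau)$; the semigroup property gives $u(\tau)\le u(\tau_0)+\int_{\tau_0}^\tau L(\gamma,u,\dot\gamma)\,d\sigma$ while $v(\tau)=v(\tau_0)+\int_{\tau_0}^\tau L(\gamma,v,\dot\gamma)\,d\sigma$, and the bound $|\partial L/\partial u|\le\lambda$ plus Gronwall force $u\le v$. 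The reverse inequality $T^-_t\varphi(x)\ge \inf_y h_{y,\varphi(y)}(x,t)$ is obtained symmetrically along a minimizer of $T^-_t\varphi(x)$. This replaces the appeal to ``uniqueness in Theorem~\ref{IVP}'' by an explicit comparison and removes the circularity.
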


\subsection{Weak KAM solutions}
Following Fathi \cite{Fat-b}, one can define weak KAM solutions of  equation \eqref{hj}.

\begin{definition}\label{bwkam}
	A function $u\in C(M,\mathbb{R})$ is called a backward weak KAM solution of \eqref{hj} if
	\begin{itemize}
		\item [(i)] for each continuous piecewise $C^1$ curve $\gamma:[t_1,t_2]\rightarrow M$, we have
		\[
		u(\gamma(t_2))-u(\gamma(t_1))\leq\int_{t_1}^{t_2}L(\gamma(s),u(\gamma(s)),\dot{\gamma}(s))ds;
		\]
		\item [(ii)] for each $x\in M$, there exists a $C^1$ curve $\gamma:(-\infty,0]\rightarrow M$ with $\gamma(0)=x$ such that
		\begin{align}\label{cali1}
		u(x)-u(\gamma(t))=\int^{0}_{t}L(\gamma(s),u(\gamma(s)),\dot{\gamma}(s))ds, \quad \forall t<0.
		\end{align}
	\end{itemize}
Similarly, 	a function $u\in C(M,\mathbb{R})$ is called a forward weak KAM solution of of \eqref{hj} if it satisfies (i) and  
for each $x\in M$, there exists a $C^1$ curve $\gamma:[0,+\infty)\rightarrow M$ with $\gamma(0)=x$ such that
		\begin{align}\label{cali2}
		u(\gm(t))-u(x)=\int_{0}^{t}L(\gamma(s),u(\gamma(s)),\dot{\gamma}(s))ds,\quad \forall t>0.
		\end{align}
	We denote by $\mathcal{S}_-$ (resp. $\mathcal{S}_+$) the set of backward (resp. forward) weak KAM solutions. By the analogy of \cite{Fat-b}, (i) of Definition \ref{bwkam} reads that $u$  is dominated by $L$, denoted by $u\prec L$. The curves in \eqref{cali1} and \eqref{cali2} are called  $(u,L,0)$-calibrated curves. 
\end{definition}

By definitions of weak KAM solutions and $\{T^{\pm}_t\}_{t\geq 0}$, there holds

\begin{proposition}\label{pr4.5}
Backward weak KAM solutions and  viscosity solutions  of  equation \eqref{hj} are the same. Moreover,
\begin{itemize}
\item
 [(i)] $u\in\mathcal{S}_-$ if and only if $T^-_tu=u$ for all $t\geq 0$;
 \item [(ii)] $u\in\mathcal{S}_+$ if and only if $T^+_tu=u$ for all $t\geq 0$.
 \end{itemize}
\end{proposition}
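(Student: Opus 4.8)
The first assertion, that backward weak KAM solutions of \eqref{hj} are exactly its viscosity solutions, is the content of \cite{SWY}; so the task is to prove the semigroup characterizations (i) and (ii). The plan is to establish each biconditional by proving the two implications separately, handling the ``$-$'' and ``$+$'' cases by the same argument. For (i) there is in fact a quick route: a function $u$ is a viscosity solution of \eqref{hj} if and only if $(x,t)\mapsto u(x)$ is a time-independent viscosity solution of $w_t+H(x,w,w_x)=0$ with initial datum $u$ — testing from above/below reduces the evolutionary inequalities to the stationary ones because any test function has a critical point in $t$ at a contact point — and then uniqueness for this Cauchy problem, proved alongside the properties of $\{T^-_t\}$ in \cite{WWY1}, forces $T^-_tu=u$; conversely, if $T^-_tu=u$ the same function solves the evolutionary equation and hence the stationary one. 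Since no such PDE is recorded for $\{T^+_t\}$, I would instead give a uniform, self-contained argument covering both (i) and (ii), as follows.

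\medskip

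Assume first $T^{-}_tu=u$ (resp. $T^{+}_tu=u$) for all $t\ge 0$; I claim $u\in\mathcal S_-$ (resp. $u\in\mathcal S_+$). Domination $u\prec L$ is obtained by inserting an arbitrary continuous piecewise $C^1$ curve as a competitor into the variational formula defining $T^{-}_{t}u$ (resp. $T^{+}_{t}u$): since $T^{-}_\tau u=u$ (resp. $T^{+}_\tau u=u$), the Lagrangian is then evaluated along $u$ composed with the curve, and the defining inequality becomes exactly domination. For the calibrated curve through a fixed $x$, for each $n\in\mathbb N$ take a minimizer $\gamma_n:[0,n]\to M$ of $T^{-}_nu$ with $\gamma_n(n)=x$ (resp. a maximizer of $T^{+}_nu$ with $\gamma_n(0)=x$); such curves exist, are $C^1$, and lift to orbits of \eqref{c}, and combining $T^{-}_\tau u=u$ (resp. $T^{+}_\tau u=u$) with $u\prec L$ one checks that $\gamma_n$ is $(u,L,0)$-calibrated on $[0,n]$. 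The initial momenta $\frac{\partial L}{\partial\dot x}(x,u(x),\dot\gamma_n)$ at the endpoint $x$ are uniformly bounded by the standard a priori speed bounds for minimizers, so along a subsequence they converge and, by continuous dependence of $\Phi_t$ on initial conditions together with compactness of $M$, the corresponding orbits converge uniformly on compact time intervals to an orbit defined on all of $(-\infty,0]$ (resp. $[0,+\infty)$); passing to the limit in the calibration identity, using continuity of $u$, yields the desired $(u,L,0)$-calibrated curve, so $u\in\mathcal S_-$ (resp. $\mathcal S_+$).

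\medskip

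For the converse, let $u\in\mathcal S_-$; by Proposition \ref{pr4.2}(1), $T^{-}_tu(x)=\inf_{y\in M}h_{y,u(y)}(x,t)$, and we must show this equals $u(x)$. The inequality $T^{-}_tu(x)\le u(x)$: take the $(u,L,0)$-calibrated curve of $u$ arriving at $x$ after time $t$; it lifts to an orbit of \eqref{c} whose $u$-coordinate is $u$ along the curve, so the minimality property of $h_{\gamma(0),u(\gamma(0))}$ gives $h_{\gamma(0),u(\gamma(0))}(x,t)\le u(x)$. The inequality $T^{-}_tu(x)\ge u(x)$: fix $y$, let $\sigma:[0,t]\to M$ be a minimizer of $h_{y,u(y)}(x,t)$ with lift $U(s):=h_{y,u(y)}(\sigma(s),s)$, so $U(0)=u(y)$, $U(t)=h_{y,u(y)}(x,t)$, and $\dot U(s)=L(\sigma(s),U(s),\dot\sigma(s))$. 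Set $\phi(s):=U(s)-u(\sigma(s))$; then $\phi(0)=0$, and since $u\prec L$ gives $\frac{d}{ds}u(\sigma(s))\le L(\sigma(s),u(\sigma(s)),\dot\sigma(s))$ for a.e.\ $s$, the mean value theorem gives $\dot\phi(s)\ge a(s)\phi(s)$ a.e., where $a(s)=\frac{\partial L}{\partial u}(\sigma(s),\theta(s),\dot\sigma(s))\in[-\lambda,0)$ by (L3). Since $s\mapsto\phi(s)\exp\big(-\int_0^s a(\tau)d\tau\big)$ is then nondecreasing and vanishes at $s=0$, we get $\phi\ge 0$, hence $h_{y,u(y)}(x,t)=U(t)\ge u(x)$; taking the infimum over $y$ proves (i). For (ii) one repeats the argument with $T^{+}_tu(x)=\sup_{y\in M}h^{y,u(y)}(x,t)$: the inequality ``$\ge$'' comes from the maximality property of $h^{y,u(y)}$ applied to the forward calibrated curve of $u$, and ``$\le$'' from the same Gronwall comparison, now performed along a maximizer of $h^{y,u(y)}(x,t)$ with $\phi(t)=0$ and the monotone quantity read from $t$ downward.

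\medskip

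The step I expect to be the real obstacle is the last one — the inequalities $T^{-}_tu\ge u$ and $T^{+}_tu\le u$ for a weak KAM solution $u$: when $L$ does not depend on $u$ these are immediate from domination, but here the $u$-dependence of $L$ breaks the naive comparison and one must control the discrepancy by a Gronwall estimate, which is exactly where hypothesis (H3) (equivalently (L3)) is used. The remaining compactness argument producing the bi-infinite calibrated curve is routine Tonelli theory, but it relies on the a priori speed bounds for minimizers and on global existence of the limit orbit, which I would invoke from \cite{WWY,WWY1}.
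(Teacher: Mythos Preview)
The paper does not give its own proof here: it cites \cite{SWY} for the equivalence with viscosity solutions and for (i), and says (ii) is similar. Your quick PDE route for (i) via uniqueness for the Cauchy problem is correct and is essentially what one finds in that reference.

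In your self-contained argument there is a real gap, precisely where you think things are easy. To prove $T^-_tu(x)\le u(x)$ for $u\in\mathcal S_-$, you assert that the calibrated curve ``lifts to an orbit of \eqref{c} whose $u$-coordinate is $u$ along the curve''. This is true but not automatic: the functional that a calibrated curve minimizes has integrand $L(\eta,u(\eta),\dot\eta)$, with the middle slot filled by the \emph{fixed} function $u$ evaluated along $\eta$, not by a Carath\'eodory variable $z$ solving $\dot z=L(\eta,z,\dot\eta)$; the two Euler--Lagrange systems differ. In the paper the orbit property for calibrated curves is Proposition~\ref{inva}, and its proof \emph{uses} $T^-_tu_-=u_-$, so invoking it here is circular. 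The fix is simply to reverse the order of your two inequalities. Prove $T^-_tu\ge u$ first by your Gronwall comparison, which gives $u(z)\le h_{y,u(y)}(z,\tau)$ for all $y,z,\tau$. Then take the calibrated curve $\tilde\gamma:[0,t]\to M$ with $\tilde\gamma(0)=y$, $\tilde\gamma(t)=x$: calibration says $u(x)=u(y)+\int_0^tL(\tilde\gamma,u(\tilde\gamma),\dot{\tilde\gamma})\,d\tau$; since $u(\tilde\gamma(\tau))\le h_{y,u(y)}(\tilde\gamma(\tau),\tau)$ and $L$ is decreasing in $u$ by (L3), the integrand dominates $L\big(\tilde\gamma,h_{y,u(y)}(\tilde\gamma,\cdot),\dot{\tilde\gamma}\big)$, so inserting $\tilde\gamma$ into the defining infimum for $h$ yields $u(x)\ge h_{y,u(y)}(x,t)$ and hence $T^-_tu(x)\le u(x)$. (This is exactly the computation in the proof of Proposition~\ref{inva}, now rendered non-circular.) The same reordering handles (ii). Your compactness/diagonal argument for the other implication is fine; for the momentum bound, the clean justification is that along a calibrated segment $p(s)=Du(\gamma(s))$ by Lemma~\ref{labell}, so the Lipschitz constant of $u$ from Lemma~\ref{ulipp} bounds $\|p\|$ directly.
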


See \cite{SWY} for the proof of (i) of Proposition \ref{pr4.5} and the equivalence between backward weak KAM solutions and viscosity solutions. The proof of (ii) of Proposition \ref{pr4.5} is quite similar to the one of (i) and thus we omit it here. 

\begin{proposition}\label{Un}
	If $\mathcal{S}_-\neq \emptyset$, then $\mathcal{S}_-$ is a singleton. 
\end{proposition}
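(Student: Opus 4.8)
The plan is to show that any two backward weak KAM solutions must coincide, exploiting the \emph{strict} monotonicity $\frac{\partial H}{\partial u}>0$ (equivalently $\frac{\partial L}{\partial u}<0$), which is the essential ingredient distinguishing this from the classical case. Suppose $u_1, u_2\in\mathcal{S}_-$ with $u_1\neq u_2$. By Proposition \ref{pr4.5}(i), $T^-_tu_i=u_i$ for all $t\geq 0$, $i=1,2$. The idea is to prove the strict contraction claimed in the Remark after Proposition \ref{pr4.1}, namely that under (H1)--(H3),
\[
\|T^-_t\varphi-T^-_t\psi\|_\infty<\|\varphi-\psi\|_\infty,\qquad \forall t>0,\ \forall \varphi\neq\psi,
\]
and then apply it to $\varphi=u_1$, $\psi=u_2$: it would give $\|u_1-u_2\|_\infty=\|T^-_tu_1-T^-_tu_2\|_\infty<\|u_1-u_2\|_\infty$, a contradiction unless $u_1=u_2$.

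First I would establish the strict contraction. Fix $t>0$ and $x\in M$, and let $\gamma:[0,t]\to M$ with $\gamma(t)=x$ be a minimizer realizing $T^-_t\varphi(x)$, so that $T^-_t\varphi(x)=\varphi(\gamma(0))+\int_0^tL(\gamma,T^-_\tau\varphi(\gamma),\dot\gamma)\,d\tau$. Using this same curve as a competitor for $T^-_t\psi(x)$ gives
\[
T^-_t\psi(x)-T^-_t\varphi(x)\leq \psi(\gamma(0))-\varphi(\gamma(0))+\int_0^t\Big(L(\gamma,T^-_\tau\psi(\gamma),\dot\gamma)-L(\gamma,T^-_\tau\varphi(\gamma),\dot\gamma)\Big)d\tau.
\]
Writing $w(\tau):=T^-_\tau\psi(\gamma(\tau))-T^-_\tau\varphi(\gamma(\tau))$ and using the mean value theorem together with (L3) (so that $\partial L/\partial u\in[-\lambda,0)$), the integrand is bounded by $\lambda|w(\tau)|$ when $w(\tau)<0$ and is $\leq 0$ when $w(\tau)\geq 0$; combined with $|w(0)|\leq\|\varphi-\psi\|_\infty$ one obtains a Gronwall-type differential inequality for the quantity controlling $w$. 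The key quantitative gain is that the negativity of $\partial L/\partial u$ is \emph{strict} on compact sets, so on any time interval where $w$ is near its maximum the contraction is genuine; iterating (or a direct Gronwall estimate on $\max(0,-w)$ and $\max(0,w)$ separately) yields $T^-_t\psi(x)-T^-_t\varphi(x)< \|\varphi-\psi\|_\infty$, and symmetrically with the roles reversed, whence the strict inequality in the sup norm (here one also uses compactness of $M$ to pass from the pointwise strict bound to a strict bound on the supremum — the sup is attained).

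The main obstacle is precisely this last passage: a pointwise strict inequality does not automatically upgrade to a strict inequality of suprema, so I must extract a \emph{uniform} gap. The clean way is to run the Gronwall argument carefully: one shows that if $d(\tau):=\|T^-_\tau\varphi-T^-_\tau\psi\|_\infty$, then $d$ satisfies $d(t)\leq e^{-\kappa t}\,d(0)$ or at least $d$ is strictly decreasing as long as $d(0)>0$, using that along \emph{any} minimizing curve the Lagrangian difference is controlled by $-\delta|w|$ plus a nonpositive term whenever $w$ has the unfavorable sign, with $\delta>0$ coming from a uniform lower bound on $-\partial L/\partial u$ over the relevant compact region of $TM\times\R$ (the curves and $u$-values stay in a compact set because $M$ is closed, the solutions are uniformly Lipschitz, and $T^-_\tau\varphi$, $T^-_\tau\psi$ are uniformly bounded for $\tau\in[0,t]$). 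Once $d(t)<d(0)$ for all $t>0$ whenever $d(0)>0$ is in hand, applying it to the fixed points $u_1,u_2$ forces $d(0)=\|u_1-u_2\|_\infty=0$, i.e. $u_1=u_2$, proving $\mathcal{S}_-$ is a singleton. I would present the strict-contraction estimate as the heart of the proof and note that it simultaneously establishes the inequality promised in the Remark following Proposition \ref{pr4.1}.
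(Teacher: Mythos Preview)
Your overall strategy is exactly the paper's: reduce uniqueness to the strict contraction
\[
\|T^-_t\varphi-T^-_t\psi\|_\infty<\|\varphi-\psi\|_\infty,\qquad t>0,\ \varphi\neq\psi,
\]
and apply it to two fixed points of $T^-_t$. The gap is in your proof of this inequality. Your direct comparison along a minimizer of $T^-_t\varphi(x)$ produces the difference $w(\tau)=T^-_\tau\psi(\gamma(\tau))-T^-_\tau\varphi(\gamma(\tau))$, whose sign you cannot control. When $w(\tau)<0$ the integrand $\partial_uL\cdot w(\tau)$ is \emph{positive}, not ``nonpositive'' as you write, so your claimed decomposition ``$-\delta|w|$ plus a nonpositive term'' is incorrect in that regime. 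The Gronwall sketch that follows does not close, and the appeal to a uniform $\delta>0$ on compacta, while legitimate, does not by itself repair the sign problem. You recognize the pointwise-to-supremum obstacle but do not actually overcome it.

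The paper sidesteps the sign issue entirely with a short trick you are missing. Set $b=\|\varphi-\psi\|_\infty$ and $\psi^{\pm}=\psi\pm b$. Since $\psi<\psi^+$ \emph{strictly}, monotonicity (Proposition~\ref{pr4.1}(1)) gives $T^-_\tau\psi<T^-_\tau\psi^+$ for every $\tau\geq 0$, so the analogue of $w$ is strictly positive throughout and (L3) makes the integrand strictly negative. Taking a minimizer $\gamma$ of $T^-_t\psi(x)$ one gets, for every $x$,
\[
T^-_t\psi^+(x)\leq \psi^+(\gamma(0))+\int_0^tL(\gamma,T^-_\tau\psi^+(\gamma),\dot\gamma)\,d\tau
<\psi(\gamma(0))+b+\int_0^tL(\gamma,T^-_\tau\psi(\gamma),\dot\gamma)\,d\tau
=T^-_t\psi(x)+b,
\]
whence $T^-_t\varphi\leq T^-_t\psi^+<T^-_t\psi+b$ pointwise; compactness of $M$ then gives the strict sup-norm bound. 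The replacement of $\varphi$ by $\psi+b$ is the device that forces the favorable sign and makes the argument one line.
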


\begin{proof}
By Proposition \ref{pr4.5}, if $u_-$ is a  backward weak KAM solution of equation \eqref{hj}, then $T^-_{t}u_-=u_-$ for all $t\geq 0$. Thus the proposition is an easy consequence of the following property of $T^-_t$: for any $t>0$ and any $\varphi$, $\psi\in C(M,\R)$ with $\varphi\neq\psi$, we have
\begin{equation}\label{A-1}
\|T^-_t\varphi-T^-_t\psi\|_{\infty}<\|\varphi-\psi\|_{\infty}.
\end{equation}
Set $b=\|\varphi-\psi\|_{\infty}$. Then \eqref{A-1} is equivalent to
\begin{equation}
T^-_t\psi-b<T^-_t\varphi<T^-_t\psi+b,\quad \forall t>0.
\end{equation}
Let  $\psi^{\pm}=\psi\pm b$. It is clear that
\begin{equation}\label{A-2}
\psi^{-}\leq\varphi\leq\psi^{+},\quad 
\psi^{-}<\psi<\psi^{+}.
\end{equation}
By Proposition \ref{pr4.1} (1), for any $t>0$ we have
\begin{equation}\label{A-3}
T^-_{t}\psi^{-}\leq T^-_{t}\varphi\leq T^-_{t}\psi^{+},\quad 
T^-_{t}\psi^{-}< T^-_{t}\psi<T^-_{t}\psi^{+}.
\end{equation}
For any given $x\in M$, if $\gamma:[0,t]\rightarrow M$ is a minimizer of $T^-_{t}\psi(x)$ with $\gm(t)=x$, then  (L3) implies
\begin{equation}\label{A-5}
\begin{split}
\int_0^t L(\gm(\tau),T^-_\tau\psi^{+}(\gm(\tau)),\dot{\gm}(\tau))d\tau<\int_0^t L(\gm(\tau),T^-_\tau\psi(\gm(\tau)),\dot{\gm}(\tau))d\tau.
\end{split}
\end{equation}
In view of the definition of the solution semigroup,  \eqref{A-5} leads to
\begin{align*}
T^-_{t}\psi^{+}(x)&\leq\psi^{+}(\gm(0))+\int_0^tL(\gm(\tau),T^-_\tau\psi^{+}(\gm(\tau)),\dot{\gm}(\tau))d\tau\\
&<\psi(\gm(0))+b+\int_0^tL(\gm(\tau),T^-_\tau\psi(\gm(\tau)),\dot{\gm}(\tau))d\tau\\
&=T^-_{t}\psi(x)+b.
\end{align*}
 Combining \eqref{A-3}, we have $T^-_{t}\varphi\leq T^-_{t}\psi^{+}<T^-_{t}\psi+b$. Similarly, one can show that $T^-_{t}\psi-b<T^-_{t}\psi^{-}\leq T^-_{t}\varphi$. This completes the proof.
\end{proof}

Let us recall a crucial convergence result \cite{SWY} of the backward solution semigroup. See \cite{Li} for a proof from PDE aspect.

\begin{proposition}\label{pr4.6}
	Assume (H1), (H2), (H3') and (A). For each $\varphi\in C(M,\mathbb{R})$, the uniform limit $\lim_{t\rightarrow +\infty}T^-_t\varphi(x)$ exists. Let $\varphi_\infty(x)=\lim_{t\rightarrow +\infty}T^-_t\varphi(x)$. Then $\varphi_\infty(x)$ is a backward weak KAM solution of equation \eqref{hj}.
\end{proposition}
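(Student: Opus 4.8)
\medskip
\noindent\emph{Proof idea.}
The plan is to combine an Arzel\`a--Ascoli compactness argument in $C(M,\mathbb{R})$ with an analysis of the $\omega$-limit set of the orbit $t\mapsto T^-_t\varphi$, and then to identify the limit through the variational representation of $T^-_t$. First I would exploit (A): by Theorem \ref{Ad}(2) equation \eqref{hj} has a backward weak KAM solution $\bar u$, so $T^-_t\bar u=\bar u$ for all $t\ge 0$ by Proposition \ref{pr4.5}(i). The $1$-Lipschitz property of $T^-_t$ (Proposition \ref{pr4.1}(3)) then gives $\|T^-_t\varphi-\bar u\|_\infty=\|T^-_t\varphi-T^-_t\bar u\|_\infty\le\|\varphi-\bar u\|_\infty$, so $\{T^-_t\varphi\}_{t\ge 0}$ is uniformly bounded; together with the standard a priori bound on the speed of action minimizers (depending only on $L$ through (L1)--(L3) and on the uniform sup-norm bound just obtained) and Proposition \ref{pr4.1}(2), this yields that $\{T^-_t\varphi\}_{t\ge 1}$ is equi-Lipschitz on $M$. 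Hence $\{T^-_t\varphi:t\ge 1\}$ is relatively compact in $(C(M,\mathbb{R}),\|\cdot\|_\infty)$.

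Next I would study the $\omega$-limit set $\Omega(\varphi):=\{\phi\in C(M,\mathbb{R}):\ T^-_{t_n}\varphi\to\phi\ \text{uniformly for some}\ t_n\to+\infty\}$. By the previous step $\Omega(\varphi)$ is nonempty and compact; it is connected because $t\mapsto T^-_t\varphi$ is a continuous curve inside a compact subset of $C(M,\mathbb{R})$; and it is invariant under every $T^-_s$, since $T^-_{t_n+s}\varphi=T^-_s(T^-_{t_n}\varphi)$ and $T^-_s$ is continuous. The statement thus reduces to showing that $\Omega(\varphi)$ is a single point $\phi$: by invariance such a $\phi$ is fixed by every $T^-_s$, hence a backward weak KAM solution by Proposition \ref{pr4.5}(i), and $T^-_t\varphi\to\phi$ uniformly. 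I note that under the stronger hypothesis (H3) one could finish at once: $t\mapsto\|T^-_t\varphi-\bar u\|_\infty$ is non-increasing, hence converges to some $\ell\ge 0$, so every $\phi\in\Omega(\varphi)$ has $\|\phi-\bar u\|_\infty=\ell$, while the strict contraction valid under (H1)--(H3) (the Remark after Proposition \ref{pr4.1}) forces $\|T^-_s\phi-\bar u\|_\infty<\ell$ for $s>0$ unless $\phi=\bar u$. Under the stated hypothesis (H3') this shortcut is unavailable, so a genuinely different argument is required.

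For (H3') I would pass to the representation $T^-_t\varphi(x)=\inf_{y\in M}h_{y,\varphi(y)}(x,t)$ (Proposition \ref{pr4.2}(1)) and prove that, for each fixed $(y,u_0)$, $h_{y,u_0}(\cdot,t)$ converges uniformly on $M$ as $t\to+\infty$ to a ``contact Peierls barrier'' $h_\infty(y,u_0;\cdot)$. The inputs would be: boundedness of $h_{y,u_0}(\cdot,t)$ on $M\times[\delta,+\infty)$ (Theorem \ref{Ad}(1)); the Markov property (property (4) of $h_{x_0,u_0}$) to reduce the asymptotics to those of minimizing curves; and the fact that minimizers of $h_{y,u_0}(x,t)$ with $t$ large spend all but a bounded portion of their time near the projected Aubry set $\mathcal{A}$, along which the calibration/domination inequalities together with $\partial L/\partial u\le 0$ force both the $u$-component and the action increment to stabilize, so that $\liminf_{t\to+\infty}h_{y,u_0}(x,t)$ is in fact a limit. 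Granting this, one exchanges $\inf_{y\in M}$ with $\lim_{t\to+\infty}$ (legitimate since $y\mapsto h_{y,\varphi(y)}(x,t)$ is equi-Lipschitz uniformly in $t\ge 1$ by the Lipschitz dependence in property (3), and $M$ is compact) to obtain $T^-_t\varphi\to\varphi_\infty:=\inf_{y\in M}h_\infty(y,\varphi(y);\cdot)$ uniformly, and then checks $T^-_s\varphi_\infty=\varphi_\infty$, so that $\varphi_\infty\in\mathcal{S}_-$ by Proposition \ref{pr4.5}(i).

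The main obstacle is precisely the convergence of $h_{y,u_0}(\cdot,t)$ as $t\to+\infty$ under only (H3'): it is the contact counterpart of the weak convergence theorem for the autonomous Lax--Oleinik semigroup (see \cite{Fat-b}) and relies on the fine structure of calibrated minimizers and of the Aubry set (equivalently, on showing that $g_\infty(s):=\lim_{t\to+\infty}\|T^-_{t+s}\varphi-T^-_t\varphi\|_\infty$ vanishes for every $s\ge 0$); it cannot be extracted from the soft monotonicity and non-expansiveness of the semigroup alone. This is carried out in \cite{SWY}, and a purely PDE proof, via half-relaxed limits and a representation of the limit through its values on the Aubry set, is given in \cite{Li}.
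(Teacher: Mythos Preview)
The paper does not give a proof of this proposition at all: it is stated as a recalled result from \cite{SWY}, with \cite{Li} mentioned for a PDE argument. Your proposal is therefore strictly more detailed than the paper's treatment. The compactness step (uniform boundedness via a fixed point $\bar u$ obtained from Theorem~\ref{Ad}(2) together with the $1$-Lipschitz property of $T^-_t$, then equi-Lipschitzness) and the $\omega$-limit set analysis are correct and standard; your observation that under (H3) the strict contraction of Proposition~\ref{Un} finishes the argument instantly is also right. For the genuine (H3') case you correctly isolate the hard point---convergence of $h_{y,u_0}(\cdot,t)$ as $t\to+\infty$, the contact analogue of Fathi's convergence theorem---and you defer to \cite{SWY} and \cite{Li}, which is exactly what the paper does. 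So your write-up is consistent with the paper's handling of the proposition, just more explicit about where the real work lies.
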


Under assumptions (H1), (H2), (H3') and (A), by Propositions \ref{pr4.2} and \ref{pr4.6}, for any given $x_0\in M$, $u_0\in\mathbb{R}$ and $s>0$, we deduce that
\[
\lim_{t\rightarrow +\infty}h_{x_0,u_0}(x,t+s)=\lim_{t\rightarrow +\infty}T^-_th_{x_0,u_0}(x,s)
\]
exists. Thus, we can define a function on $M$ by
\[
h_{x_0,u_0}(x,+\infty):=\lim_{t\rightarrow +\infty}h_{x_0,u_0}(x,t),\quad x\in M.
\]
By Propositions \ref{Un} and \ref{pr4.6}, we have
\begin{proposition}\label{pr4.655}
	Assume (H1)-(H3) and (A).  For each $(x_0,u_0)\in M\times\R$, we have $h_{x_0,u_0}(x,+\infty)=u_-(x)$ for all $x\in M$, i.e.,  $h_{x_0,u_0}(x,+\infty)$ is the unique backward weak KAM solution of equation \eqref{hj}.
\end{proposition}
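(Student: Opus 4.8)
The plan is to reduce the statement to the convergence result Proposition \ref{pr4.6} together with the uniqueness Proposition \ref{Un}. First I would note that (H1)--(H3) imply in particular (H1), (H2) and (H3'), so every result stated under the weaker hypotheses is available here. Fix $(x_0,u_0)\in M\times\R$ and fix any $s>0$. By the Lipschitz continuity property (3) of the forward implicit action function, $\varphi:=h_{x_0,u_0}(\cdot,s)$ belongs to $C(M,\R)$, so Proposition \ref{pr4.6} applies to $\varphi$: the uniform limit $\varphi_\infty=\lim_{t\to+\infty}T^-_t\varphi$ exists and is a backward weak KAM solution of \eqref{hj}.

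Next I would invoke Proposition \ref{pr4.2}(2), namely $T^-_t h_{x_0,u_0}(\cdot,s)=h_{x_0,u_0}(\cdot,t+s)$, to identify $\varphi_\infty$ with the limit $h_{x_0,u_0}(x,+\infty)=\lim_{t\to+\infty}h_{x_0,u_0}(x,t)$ already introduced just above the statement. Concretely,
\[
h_{x_0,u_0}(x,+\infty)=\lim_{t\to+\infty}h_{x_0,u_0}(x,t+s)=\lim_{t\to+\infty}T^-_t\varphi(x)=\varphi_\infty(x),
\]
the convergence being uniform in $x$. In particular this shows $\mathcal{S}_-\neq\emptyset$.

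Finally, Proposition \ref{Un} tells us that $\mathcal{S}_-$ is then a singleton; since $u_-$ denotes the unique element of $\mathcal{S}_-$, we conclude $h_{x_0,u_0}(\cdot,+\infty)=\varphi_\infty=u_-$. Because this argument is valid for every choice of $(x_0,u_0)\in M\times\R$, the limit is independent of the starting point, which is exactly the asserted identity. There is no genuinely hard step here; the only points needing care are the bookkeeping of which hypotheses feed into which cited result (in particular that (H3) gives (H3'), so Proposition \ref{pr4.6} applies, while (H1)--(H3) are what Proposition \ref{Un} uses), and the correct use of the semigroup identity with the time parameter $t+s$ rather than $t$. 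Both are immediate from the properties recalled in Section 2.
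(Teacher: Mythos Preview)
Your argument is correct and matches the paper's own reasoning essentially line for line: the paper also fixes $s>0$, uses Proposition~\ref{pr4.2}(2) to write $h_{x_0,u_0}(x,t+s)=T^-_t h_{x_0,u_0}(x,s)$, applies Proposition~\ref{pr4.6} to obtain the limit as a backward weak KAM solution, and then invokes Proposition~\ref{Un} for uniqueness. Your remarks on the hypothesis bookkeeping (that (H3) implies (H3')) are accurate and are the only point one needs to check.
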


\section{Action minimizing curves, Aubry set and Mather set}
Following Ma\~n\'e \cite{Mn3},
we introduce concepts of globally minimizing curves, static curves, Aubry set and Mather set for contact Hamilton's equations (\ref{c}).  

\subsection{Globally minimizing curves}
\begin{definition}
	 A curve $(x(\cdot),u(\cdot)):\mathbb{R}\to M\times\mathbb{R}$ is called globally minimizing, if it is locally Lipschitz and
		 for each $t_1$, $t_2\in\mathbb{R}$ with $t_1< t_2$, there holds
		\begin{align}\label{3-1}
		u(t_2)=h_{x(t_1),u(t_1)}(x(t_2),t_2-t_1).
		\end{align}
\end{definition}

\begin{proposition}\label{pr22}
	If a curve $(x(t),u(t)):\mathbb{R}\to M\times\mathbb{R}$ is  globally minimizing, then $x(t)$	is of class $C^1$. Let $p(t):=\frac{\partial L}{\partial \dot{x}}(x(t),u(t),\dot{x}(t))$. Then $(x(t),u(t),p(t))$ is a solution of equations \eqref{c}. Moreover, for each $t_1$, $t_2\in\mathbb{R}$ with $t_1<t_2$, $x(t)|_{[t_1,t_2]}$ is a minimizer of $h_{x(t_1),u(t_1)}(x(t_2),t_2-t_1)$.
\end{proposition}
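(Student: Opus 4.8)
The plan is to reduce the whole statement to a \emph{local} minimizing property and then invoke the implicit variational principle, Theorem \ref{IVP}. Concretely, it suffices to show that for every $t_0\in\R$ there is $\delta>0$ such that $x(\cdot)|_{[t_0,t_0+\delta]}$ is a minimizer of $h_{x(t_0),u(t_0)}(x(t_0+\delta),\delta)$. Granting this, Theorem \ref{IVP} gives that $x(\cdot)$ is of class $C^1$ on each such arc, hence on all of $\R$ (the $\delta$ can be taken uniform for $t_0$ in a compact set), and that along each arc the triple $\big(x(t),u(t),p(t)\big)$ with $p(t):=\tfrac{\partial L}{\partial\dot x}(x(t),u(t),\dot x(t))$ solves \eqref{c}, the $u$-coordinate being $h_{x(t_0),u(t_0)}(x(\cdot),\cdot-t_0)=u(\cdot)$ by the globally minimizing hypothesis; by uniqueness of solutions of the ODE \eqref{c} these arcs fit together into a single solution on $\R$. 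Finally, for arbitrary $t_1<t_2$ this global orbit restricted to $[t_1,t_2]$ joins $(x(t_1),u(t_1))$ to $(x(t_2),u(t_2))$ and has terminal $u$-value $u(t_2)=h_{x(t_1),u(t_1)}(x(t_2),t_2-t_1)$, which by the Minimality property of $h$ is the infimum over all such orbits; hence it is minimizing and $x|_{[t_1,t_2]}$ is a minimizer, which is the ``Moreover'' assertion.

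To prove the local claim I would combine short-time uniqueness of minimizers with the Markov property of the forward implicit action function. For $\delta>0$ small enough, the minimizer of $h_{x(t_0),u(t_0)}(x(t_0+\delta),\delta)$ is unique — a standard Tonelli fact, applicable here because $x$ and $u$ are locally Lipschitz, so $x(t_0)$ and $x(t_0+\delta)$ are close and the relevant $u$-values stay bounded. By the globally minimizing hypothesis $u(t_0+\delta)=h_{x(t_0),u(t_0)}(x(t_0+\delta),\delta)$, so there is a unique minimizer $\mu:[t_0,t_0+\delta]\to M$ from $x(t_0)$ to $x(t_0+\delta)$. For each $s\in(t_0,t_0+\delta)$, the identities $u(s)=h_{x(t_0),u(t_0)}(x(s),s-t_0)$ and $u(t_0+\delta)=h_{x(s),u(s)}(x(t_0+\delta),t_0+\delta-s)$ (again from the globally minimizing hypothesis) show that the infimum in the Markov decomposition of $h_{x(t_0),u(t_0)}(x(t_0+\delta),\delta)$ at time $s$ is attained at $y=x(s)$; the ``moreover'' clause of the Markov property then yields a minimizer of $h_{x(t_0),u(t_0)}(x(t_0+\delta),\delta)$ passing through $x(s)$, which by uniqueness is $\mu$. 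Therefore $\mu(s)=x(s)$ for all $s$, i.e. $x|_{[t_0,t_0+\delta]}=\mu$ is a minimizer.

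The step I expect to be the main obstacle is the short-time uniqueness of minimizers of the implicit action function between nearby points. In the classical Tonelli case this is standard — for short time and close endpoints the variational problem is strictly convex, leaving no room for conjugate points — but here one must either quote the corresponding statement from \cite{WWY} or reprove it in the implicit/contact setting, verifying that the dependence of $L$ on the $u$-variable and the implicit character of $h_{x_0,u_0}(x,t)$ do not break the argument. The remaining ingredients — the Markov bookkeeping and the patching of ODE solutions — are routine.
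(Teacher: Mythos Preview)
Your approach works, but the paper takes a shorter route that avoids precisely the obstacle you flagged. Instead of short-time uniqueness of minimizers between nearby endpoints, the paper proves a different uniqueness statement (Lemma~\ref{lem4.1}): if $\gamma:[0,t]\to M$ is a minimizer of $h_{x_0,u_0}(x,t)$, then for every $t_0\in(0,t)$ the restriction $\gamma|_{[0,t_0]}$ is the \emph{unique} minimizer of $h_{x_0,u_0}(\gamma(t_0),t_0)$. This holds for all times, and its proof is a two-line broken-curve argument plus ODE uniqueness via Theorem~\ref{IVP} --- no short-time analysis in the contact setting is needed. With this lemma the paper works directly on an arbitrary interval $[t_1,t_2]$: extend to $[t_1,t_2+1]$, use your Markov bookkeeping (the paper's Step~1) to find a minimizer $\alpha$ of $h_{x(t_1),u(t_1)}(x(t_2+1),t_2+1-t_1)$ passing through $x(t_2)$, conclude from Lemma~\ref{lem4.1} that $\alpha|_{[t_1,t_2]}$ is the unique minimizer to $x(t_2)$, and then the same Markov argument forces $\alpha(s)=x(s)$ for every $s\in(t_1,t_2)$. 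So $x|_{[t_1,t_2]}$ is a minimizer outright, with no local patching. Your strategy is sound, but Lemma~\ref{lem4.1} gives you the uniqueness you need for free and globalizes the argument in one stroke.
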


In order to prove the proposition, we provide a preliminary lemma.

\begin{lemma}\label{lem4.1}
	Given any $x_0$, $x\in M$, $u_0\in\mathbb{R}$ and $t>0$, let $\gamma:[0,t]\to M$ be a minimizer of $h_{x_0,u_0}(x,t)$. Then for each $t_0\in(0,t)$, there is a unique minimizer of $h_{x_0,u_0}(\gamma(t_0),t_0)$.
\end{lemma}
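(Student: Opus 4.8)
The plan is to argue by contradiction using the strict concavity/convexity built into the variational structure — more precisely, the strict monotonicity (L3) of $L$ in $u$ together with the uniqueness-of-calibrated-curve philosophy behind Proposition \ref{Un}. Suppose $\gamma:[0,t]\to M$ is a minimizer of $h_{x_0,u_0}(x,t)$ and, for some $t_0\in(0,t)$, there are two distinct minimizers $\gamma_1,\gamma_2:[0,t_0]\to M$ of $h_{x_0,u_0}(\gamma(t_0),t_0)$, i.e. both attain the infimum in \eqref{baacf1} at time $t_0$ with endpoint $\gamma(t_0)$. By the Markov property (property (4) of the forward implicit action function) applied with the splitting $t=t_0+(t-t_0)$, and since $\gamma$ is itself a minimizer with $\gamma(t_0)=\gamma(t_0)$, we have
\[
h_{x_0,u_0}(x,t)=h_{\gamma(t_0),\,h_{x_0,u_0}(\gamma(t_0),t_0)}(x,t-t_0).
\]
Hence replacing $\gamma|_{[0,t_0]}$ by either $\gamma_1$ or $\gamma_2$ (both of which bring the action to the same value $h_{x_0,u_0}(\gamma(t_0),t_0)$ at the point $\gamma(t_0)$) and then following $\gamma|_{[t_0,t]}$ produces another minimizer of $h_{x_0,u_0}(x,t)$. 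So without loss of generality we may assume $\gamma$ agrees with $\gamma_1$ on $[0,t_0]$, while $\gamma_2\neq\gamma_1$ is a second minimizer of $h_{x_0,u_0}(\gamma(t_0),t_0)$.

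Next I would use the regularity part of Theorem \ref{IVP}: every minimizer of a forward implicit action function is $C^1$ and, together with $p(s)=\frac{\partial L}{\partial\dot x}$ and $u(s)=h_{x_0,u_0}(\gamma(s),s)$, lifts to a genuine solution of \eqref{c} with the prescribed boundary/initial data $x(0)=x_0$, $\lim_{s\to0^+}u(s)=u_0$, $x(t_0)=\gamma(t_0)$. Thus $\gamma_1$ and $\gamma_2$ both lift to solutions $(x_i(s),u_i(s),p_i(s))$ of \eqref{c} on $[0,t_0]$ sharing the same endpoint data: $x_i(0)=x_0$, $x_i(t_0)=\gamma(t_0)$, $u_i(0)=u_0$, and $u_1(t_0)=u_2(t_0)=h_{x_0,u_0}(\gamma(t_0),t_0)$. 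Now consider the \emph{backward} action: by the minimality/maximality characterizations (property (2) of $h_{x_0,u_0}$, or equivalently the duality $h_{x_0,u_0}(x,t)=u \iff h^{x,u}(x_0,t)=u_0$ from Theorem \ref{IVP}), each $\gamma_i$ traversed in reverse is a maximizer of the backward implicit action $h^{\gamma(t_0),\,h_{x_0,u_0}(\gamma(t_0),t_0)}(x_0,t_0)$. The key point is to squeeze a strict inequality out of the fact that two distinct calibrated curves reach a common endpoint with a common $u$-value: splicing them and using strict convexity (L1) in $\dot x$ at the junction — exactly as in the classical Tonelli argument that two action-minimizers with the same endpoints and the same energy-type data must coincide — yields a competitor with strictly smaller action, contradicting minimality. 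The monotonicity (L3) in $u$ enters to control the feedback term $L(\gamma(\tau),h_{x_0,u_0}(\gamma(\tau),\tau),\dot\gamma(\tau))$: a pointwise decrease in the $u$-slot strictly decreases $L$, so any lowering of the action propagates.

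I expect the main obstacle to be the junction/corner argument establishing the strict inequality. In the classical (non-contact) setting one uses that a broken minimizer can be strictly shortened because $L$ is strictly convex in $\dot x$; here the Lagrangian also depends on $u$, and $u$ along the curve is itself defined implicitly via $h_{x_0,u_0}(\cdot,\cdot)$, so one must check that perturbing the curve near the corner perturbs the associated $u$-function in a controlled (and favorable) way. The clean route is: assume $\gamma_1\neq\gamma_2$ on $[0,t_0]$; then they must differ on a subinterval, and by the $C^1$ regularity there is a time $\tau_0$ where they cross transversally or separate, so the concatenation "$\gamma_1$ up to $\tau_0$, then $\gamma_2$ afterwards" is not $C^1$, hence cannot be a minimizer of $h_{x_0,u_0}(\gamma(t_0),t_0)$ (minimizers are $C^1$ by Theorem \ref{IVP}) — yet by the Markov property and the equality of action values it \emph{is} a minimizer, a contradiction. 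This is the cleanest packaging, reducing everything to "$C^1$-regularity of minimizers $+$ Markov property" and avoiding an explicit corner computation. I would present it in that form, invoking Theorem \ref{IVP} and property (4) of $h_{x_0,u_0}$, and fall back on the strict-convexity corner estimate only if the abstract argument needs the extra input that the concatenation genuinely has a corner (which again follows from $\gamma_1(\tau_0)=\gamma_2(\tau_0)$, $\dot\gamma_1(\tau_0)\neq\dot\gamma_2(\tau_0)$ at a first branching time).
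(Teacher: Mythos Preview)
Your first paragraph already contains the paper's whole argument, but you then abandon it for detours that either add nothing or do not work. Once you observe that the concatenation $\beta:=\alpha\big|_{[0,t_0]}\cup\gamma\big|_{[t_0,t]}$ (your ``replace $\gamma|_{[0,t_0]}$ by $\gamma_2$ and follow $\gamma|_{[t_0,t]}$'') is a minimizer of $h_{x_0,u_0}(x,t)$, Theorem~\ref{IVP} tells you $\beta$ is $C^1$; hence $\dot\alpha(t_0)=\dot\beta(t_0^-)=\dot\beta(t_0^+)=\dot\gamma(t_0)$. Since $\alpha$ and $\gamma|_{[0,t_0]}$ lift to solutions of \eqref{c} with the same data $(x,u,p)$ at $t_0$, ODE uniqueness gives $\alpha=\gamma|_{[0,t_0]}$. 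That is exactly the paper's proof; neither (L3), nor Proposition~\ref{Un}, nor the backward action $h^{x,u}$ is needed.

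The alternative route you sketch in the last paragraph has a genuine gap. You propose to concatenate $\gamma_1$ and $\gamma_2$ at an interior time $\tau_0\in(0,t_0)$ where they ``cross transversally or separate'' and then argue the result is a non-$C^1$ minimizer of $h_{x_0,u_0}(\gamma(t_0),t_0)$. But two distinct minimizers sharing only the endpoints $x_0$ and $\gamma(t_0)$ need not meet at any interior time; on a manifold they can be entirely disjoint on $(0,t_0)$. The ``first branching time'' is generically $\tau_0=0$ (distinct initial velocities), and concatenating there gives back $\gamma_2$, which is $C^1$, so no contradiction. The correct junction point is $t_0$, and the correct ambient problem is the \emph{longer} one on $[0,t]$: it is the presence of the tail $\gamma|_{[t_0,t]}$ that forces the velocity match.
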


\begin{proof}
	Since $\gamma$ is a minimizer of $h_{x_0,u_0}(x,t)$, then $\gamma|_{[0,t_0]}$ is a minimizer $h_{x_0,u_0}(\gamma(t_0),t_0)$. If there is another minimizer $h_{x_0,u_0}(\gamma(t_0),t_0)$, denoted by $\alpha$, then we will show that $\alpha=\gamma|_{[0,t_0]}$. Let
	\[
	\beta(s):=\left\{\begin{array}{ll}
	\alpha(s), \quad s\in[0,t_0],\\
	\gamma(s),\quad s\in[t_0,t].
	\end{array}\right.
	\]
	Then we get
	\begin{align*}
	h_{x_0,u_0}(x,t)&=h_{x_0,u_0}(\gamma(t_0),t_0)+\int_{t_0}^tL(\gamma(s),h_{x_0,u_0}(\gamma(s),s),\dot{\gamma}(s))ds\\
	&=h_{x_0,u_0}(\alpha(t_0),t_0)+\int_{t_0}^tL(\gamma(s),h_{x_0,u_0}(\gamma(s),s),\dot{\gamma}(s))ds\\
	&=u_0+\int_0^{t_0}L(\alpha(s),h_{x_0,u_0}(\alpha(s),s),\dot{\alpha}(s))ds\\
&\ \ \ \ +\int_{t_0}^tL(\gamma(s),h_{x_0,u_0}(\gamma(s),s),\dot{\gamma}(s))ds\\
	&=u_0+\int_0^tL(\beta(s),h_{x_0,u_0}(\beta(s),s),\dot{\beta}(s))ds,
	\end{align*}
	which implies that $\beta$ is a minimizer of $h_{x_0,u_0}(x,t)$. From
	Theorem \ref{IVP}, $\gamma$ and $\beta$ are both of class $C^1$. Therefore, we have $\dot{\gamma}(t_0)=\dot{\beta}(t_0)$. By Theorem \ref{IVP} and the uniqueness of solutions of initial value problem of ordinary differential equations, we have $\alpha(s)=\gamma(s)$ for all $s\in[0,t_0]$, which completes the proof.
\end{proof}

\noindent\textit{Proof of Proposition \ref{pr22}}.

\medskip

\noindent Step 1: Given any $t_1$, $t_2\in\mathbb{R}$ with $t_1< t_2$ and $t_0\in(t_1,t_2)$, since  $(x(t),u(t))$ is  globally minimizing, then we have
\begin{align*}
u(t_2)&=h_{x(t_1),u(t_1)}(x(t_2),t_2-t_1),\\
u(t_2)&=h_{x(t_0),u(t_0)}(x(t_2),t_2-t_0),\\
u(t_0)&=h_{x(t_1),u(t_1)}(x(t_0),t_0-t_1).
\end{align*}
It follows that
\begin{align*}
h_{x(t_1),u(t_1)}(x(t_2),t_2-t_1)&=h_{x(t_0),u(t_0)}(x(t_2),t_2-t_0)
=h_{x(t_0),h_{x(t_1),u(t_1)}(x(t_0),t_0-t_1)}(x(t_2),t_2-t_0).
\end{align*}
In view of Markov property of the forward implicit action function, there is a minimizer of $h_{x(t_1),u(t_1)}(x(t_2),t_2-t_1)$, denoted by $\gamma$, such that $\gamma(t_0)=x(t_0)$.

\medskip

\noindent Step 2: From the above arguments, there exists a minimizer $\alpha$ of $h_{x(t_1),u(t_1)}(x(t_2+1),t_2-t_1+1)$ such that $x(t_2)=\alpha(t_2)$. By Lemma \ref{lem4.1}, $\alpha|_{[t_1,t_2]}$ is the unique minimizer of $h_{x(t_1),u(t_1)}(x(t_2),t_2-t_1)$. By the arguments used in Step 1 again, $x(s)=\alpha(s)$ for all $s\in[t_1,t_2]$. Thus, by Theorem \ref{IVP} and the arbitrariness of $t_1$ and $t_2$ with $t_1<t_2$, $x(t)$ is of class $C^1$ for $t\in\mathbb{R}$, and $(x(t),u(t),p(t))$ is a solution of equations (\ref{c}), where $p(t):=\frac{\partial L}{\partial \dot{x}}(x(t),u(t),\dot{x}(t))$.
Since
\[
\dot{u}(t)=L(x(t),u(t),\dot{x}(t)),
\]
it is easy to see that $x(t)|_{[t_1,t_2]}$ is a minimizer of $h_{x(t_1),u(t_1)}(x(t_2),t_2-t_1)$.

\hfill $\Box$
%
%

\subsection{Static curves, Aubry set and Mather set}

\begin{definition}
	A curve $(x(\cdot),u(\cdot)):\mathbb{R}\to M\times\mathbb{R}$ is called static, if it is globally minimizing and for each $t_1$, $t_2\in\mathbb{R}$, there holds
	\begin{equation}\label{3-3}
	u(t_2)=\inf_{s>0}h_{x(t_1),u(t_1)}(x(t_2),s).
	\end{equation}
\end{definition}

\begin{proposition}\label{lem3.1}
	If $(x(t),u(t))$ is a static curve, then
	\[
	u(t)=h_{x(s),u(s)}(x(t),+\infty),\quad \forall s,\ t\in\mathbb{R}.
	\]
\end{proposition}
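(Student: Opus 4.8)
The plan is to prove the two inequalities $u(t)\le h_{x(s),u(s)}(x(t),+\infty)$ and $u(t)\ge h_{x(s),u(s)}(x(t),+\infty)$ separately, for $s<t$ first and then argue for $s\ge t$ by a similar method, or by combining the two cases. The key elementary fact I would use is that along a globally minimizing curve $(x(\cdot),u(\cdot))$ we have $u(t_2)=h_{x(t_1),u(t_1)}(x(t_2),t_2-t_1)$ for $t_1<t_2$, together with the static condition $u(t_2)=\inf_{\sigma>0}h_{x(t_1),u(t_1)}(x(t_2),\sigma)$, and the fact (Markov property, item (4) of the forward implicit action function, and monotonicity in the $u_0$-slot, item (1)(i)) that $\sigma\mapsto h_{x(t_1),u(t_1)}(x(t_2),\sigma)$ interacts well with the limit $\sigma\to+\infty$. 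Recall also that by Proposition~\ref{pr4.655}, the limit $h_{x_0,u_0}(x,+\infty)=u_-(x)$ exists and does not depend on $(x_0,u_0)$; this is the cleanest tool available.

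First I would fix $s<t$. Since $(x(\cdot),u(\cdot))$ is static, $u(t)=\inf_{\sigma>0}h_{x(s),u(s)}(x(t),\sigma)\le h_{x(s),u(s)}(x(t),\tau)$ for every $\tau>0$; letting $\tau\to+\infty$ gives $u(t)\le h_{x(s),u(s)}(x(t),+\infty)$. For the reverse inequality I would use the infimum in the static condition more carefully: given $\eps>0$ pick $\tau_\eps>0$ with $h_{x(s),u(s)}(x(t),\tau_\eps)<u(t)+\eps$. Now I want to push $\tau_\eps$ to $+\infty$ without increasing the value much. Here the Markov property is the engine: for $\sigma>0$, $h_{x(s),u(s)}(x(t),\tau_\eps+\sigma)=\inf_{y\in M}h_{y,h_{x(s),u(s)}(y,\tau_\eps)}(x(t),\sigma)$, and by monotonicity in the second slot this is dominated by increasing the intermediate value; combined with the static equality $u(t)=\inf_{\sigma}h_{x(s),u(s)}(x(t),\sigma)$ applied along the curve (using that $x(s')$ for $s'\in(s,t)$ also lies on the static curve), one controls the long-time values. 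Alternatively — and this is probably the cleanest route — I would simply invoke Proposition~\ref{pr4.655}: $h_{x(s),u(s)}(x(t),+\infty)=u_-(x(t))$, and then it suffices to show $u(t)=u_-(x(t))$ for every static curve, which should follow from the static identity at $t_1=s$, $t_2=t$ by taking $\sigma\to+\infty$ on one side and recognizing $u_-$ on the other together with the domination property $u_-\prec L$ applied to the minimizers of the $h$'s.

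For $s\ge t$ (including $s=t$ trivially and $s>t$), I would run the same argument with the roles reversed, using that the restriction of a static curve to any time interval is again globally minimizing and static, and that the static identity \eqref{3-3} is symmetric in the sense that it holds for all $t_1,t_2\in\mathbb{R}$; when $s>t$ one applies the identity with $t_1=t$, $t_2=s$, then converts via the relation between $u(t)$, $u(s)$ and the action. The main obstacle I anticipate is the reverse inequality $u(t)\ge h_{x(s),u(s)}(x(t),+\infty)$: one must rule out that pushing the free time parameter to infinity strictly decreases the infimum below $u(t)$, i.e., that the infimum over $\sigma>0$ in \eqref{3-3} is not "achieved only at finite times." This is exactly where one needs either the uniform convergence $h_{x_0,u_0}(x,\cdot)\to u_-$ from Proposition~\ref{pr4.6}/\ref{pr4.655} (so the limit equals the infimum as $\sigma\to\infty$ up to monotone behavior) or a compactness/Markov-property argument showing the infimum over $\sigma$ is attained in the limit. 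Granting Proposition~\ref{pr4.655}, this obstacle essentially dissolves, and the proof reduces to bookkeeping with the static identity and the definition of $h_{x_0,u_0}(\cdot,+\infty)$.
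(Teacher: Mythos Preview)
Your easy direction $u(t)\le h_{x(s),u(s)}(x(t),+\infty)$ is correct and matches the paper. The case split $s<t$ versus $s\ge t$ is unnecessary: the static identity \eqref{3-3} is stated for \emph{all} $t_1,t_2\in\R$, so the argument is uniform in $s,t$.

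The genuine gap is in the reverse inequality. You correctly isolate the obstacle (the infimum over $\sigma>0$ could in principle be strictly smaller than the limit as $\sigma\to+\infty$), but your proposed resolution via Proposition~\ref{pr4.655} does not work. Knowing that $h_{x(s),u(s)}(x(t),+\infty)=u_-(x(t))$ only identifies the limit; it says nothing about whether $\inf_{\sigma>0}$ equals $\lim_{\sigma\to+\infty}$. Reducing to ``show $u(t)=u_-(x(t))$'' just restates the problem: the inequality $u(t)\le u_-(x(t))$ is immediate from the static condition, but $u(t)\ge u_-(x(t))$ is precisely the reverse inequality you are trying to prove. (In the paper, the identity $u(t)=u_-(x(t))$ is Lemma~\ref{keey}, and its proof \emph{uses} Proposition~\ref{lem3.1}.) Your Markov-property sketch is closer in spirit, but extending past a fixed $\tau_\eps$ and looking at intermediate points $x(s')$ with $s'\in(s,t)$ does not produce arbitrarily large times with controlled action.

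The paper's key trick, which you are missing, is to shift the \emph{base point} of the static condition forward along the curve. For each $n\in\mathbb{N}$, apply \eqref{3-3} with base $x(s+n)$ to find $\sigma_n>0$ with $h_{x(s+n),u(s+n)}(x(t),\sigma_n)<u(t)+1/n$. Since the curve is globally minimizing, $h_{x(s),u(s)}(x(s+n),n)=u(s+n)$; the Markov property then gives
\[
h_{x(s),u(s)}(x(t),n+\sigma_n)\le h_{x(s+n),\,h_{x(s),u(s)}(x(s+n),n)}(x(t),\sigma_n)=h_{x(s+n),u(s+n)}(x(t),\sigma_n)<u(t)+\tfrac{1}{n}.
\]
Since $n+\sigma_n\to+\infty$, letting $n\to\infty$ yields $h_{x(s),u(s)}(x(t),+\infty)\le u(t)$. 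The long time is supplied by the segment from $x(s)$ to $x(s+n)$ along the static curve itself, where the action is exactly $u(s+n)-u(s)$; this is what makes the concatenation lossless.
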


\begin{proof}
	For any given $t$, $s\in\R$, we have
	\[u(t)=\inf_{\sigma>0}h_{x(s),u(s)}(x(t),\sigma),
	\]
	which implies
	$u(t)\leq h_{x(s),u(s)}(x(t),+\infty)$. On the other hand, by  definition, for each $n\in\mathbb{N}$, we get
	\[u(t)=\inf_{\sigma>0}h_{x(s+n),u(s+n)}(x(t),\sigma).
	\]
	There is a sequence $\{\sigma_n\}\subset\mathbb{R}_+$ such that
	\[h_{x(s+n),u(s+n)}(x(t),\sigma_n)<u(t)+\frac{1}{n}.
	\]
	Note that $h_{x(s),u(s)}(x(s+n),n)=u(s+n)$. By Markov property of implicit action functions and the definition of static curves, we have
	\begin{align*}
	h_{x(s),u(s)}(x(t),n+\sigma_n)\leq h_{x(s+n),h_{x(s),u(s)}(x(s+n),n)}(x(t),\sigma_n)
=h_{x(s+n),u(s+n)}(x(t),\sigma_n)<u(t)+\frac{1}{n}.
	\end{align*}
	Let $n\to+\infty$. Then
	\[u(t) \geq h_{x(s),u(s)}(x(t),+\infty).
	\]
	The proof is complete.
\end{proof}

Let $\Phi_t$ denote the local flow of (\ref{c}) generated by $H(x,u,p)$.
	If a curve $(x(t),u(t)):\mathbb{R}\to M\times\mathbb{R}$ is static, then by Proposition \ref{pr22}, $(x(t),u(t),p(t))$  is an orbit of $\Phi_t$, where $p(t)=\frac{\partial L}{\partial \dot{x}}(x(t),u(t),\dot{x}(t))$. We call it  a static orbit of $\Phi_t$.

\begin{definition}[Aubry set]\label{audeine}
We call the set of all static orbits  Aubry set of $H$, denoted by $\tilde{\mathcal{A}}$. We call $\mathcal{A}:=\pi\tilde{\mathcal{A}}$ the projected Aubry set, where $\pi:T^*M\times\R\rightarrow M$ denotes the orthogonal projection.
\end{definition}

By definition, $\tilde{\mathcal{A}}$ is an invariant subset of $T^*M\times\mathbb{R}$ by $\Phi_t$. 
 We will show  in Proposition \ref{mane5} that $\tilde{\mathcal{A}}$ is non-empty and compact. 
By a result of Krylov and Bogoliubov \cite{KB}, there exist Borel $\Phi_t$-invariant probability measures on $\tilde{\mathcal{A}}$. A Borel $\Phi_t$-invariant probability measure on $\tilde{\mathcal{A}}$ is called a {\it Mather measure}. Denote by $\mathfrak{M}$ the set of Mather measures. {\it Mather set} of contact Hamiltonian systems (\ref{c}) is defined by
\[\tilde{\mathcal{M}}=\mathrm{cl}\left(\bigcup_{\mu\in \mathfrak{M}}\text{supp}(\mu)\right),\]
where $\text{supp}(\mu)$ denotes the support of $\mu$. Since the set of recurrent points is dense in the supports of invariant measures, we have
\[\tilde{\mathcal{M}}=\mathrm{cl}\left(\text{Rec}\left(\Phi_t\big|_{\tilde{\mathcal{A}}}\right)\right),\]
where $\text{Rec}(S)$ denotes the set of recurrent points in $S$.

\section{Proofs of Main Results}
 
We give the proofs of Main Results 1-5 in this section. Recall that $\Phi_t$ denotes the local flow of (\ref{c}) generated by $H(x,u,p)$ and that $u_-$ denotes the unique backward weak KAM solution of equation \eqref{hj}.

\subsection{Proof of Main Result \ref{one}}
 The first part of Main Result \ref{one} is an easy consequence of Proposition \ref{inva}. Since the proof of the second part is quite similar to the one of the first part, we omit it here.  Before proving Proposition \ref{inva}, we need to show some preliminary results.

\begin{lemma}\label{ulipp}
If $u\prec L$, then $u$ is Lipschitz continuous on $M$.
\end{lemma}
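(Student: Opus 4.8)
The plan is to exhibit a single global Lipschitz constant for $u$ coming from the size of the Lagrangian $L$ on the unit disk bundle of $M$, restricted to the range of $u$ — the standard weak KAM argument, adapted to the fact that $L$ also depends on the $u$-variable. Since domination (condition (i) of Definition \ref{bwkam}) is imposed on a continuous function $u\in C(M,\R)$ and $M$ is compact, $u$ is bounded; set $C_0:=\|u\|_\infty$. The set $K:=\{(x,r,v)\in TM\times\R:\ x\in M,\ |r|\le C_0,\ \|v\|_x\le 1\}$ is compact (a closed subset of the unit disk bundle of $M$ crossed with $[-C_0,C_0]$), so $A:=\max_{K}L<\infty$ because $L$ is continuous. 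This $A$ will be the Lipschitz constant.

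Next I would fix $x$, $y\in M$ with $x\ne y$ and join them by a minimizing geodesic $\gm:[0,\ell]\to M$ parametrized by arclength, where $\ell=d(x,y)$; such a geodesic exists since $M$ is compact and connected, and it is smooth, with $\gm(0)=x$, $\gm(\ell)=y$ and $\|\dot\gm(s)\|_{\gm(s)}=1$ for all $s$. In particular $\gm$ is a continuous, piecewise $C^1$ curve, so $u\prec L$ applies to it and gives
\[
u(y)-u(x)=u(\gm(\ell))-u(\gm(0))\le\int_0^\ell L\big(\gm(s),u(\gm(s)),\dot\gm(s)\big)\,ds.
\]
Along $\gm$ one has $(\gm(s),u(\gm(s)),\dot\gm(s))\in K$, since $|u(\gm(s))|\le C_0$ and $\|\dot\gm(s)\|_{\gm(s)}=1$, hence the integrand is at most $A$ and $u(y)-u(x)\le A\ell=A\,d(x,y)$.

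Finally, running the same argument along the reversed geodesic $s\mapsto\gm(\ell-s)$, which is again unit speed and runs from $y$ to $x$, yields $u(x)-u(y)\le A\,d(x,y)$. Combining the two inequalities gives $|u(x)-u(y)|\le A\,d(x,y)$ for all $x$, $y\in M$, so $u$ is $A$-Lipschitz, as claimed. The only point requiring a little care is the a priori bound on $u$: this is where I use that domination is stated for a continuous function on the compact manifold $M$, which is exactly what makes both $C_0$ and the maximum defining $A$ finite; everything else is just the domination inequality applied to minimizing geodesics.
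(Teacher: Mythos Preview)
Your proof is correct and is essentially identical to the paper's own argument: both bound $|u|$ by compactness of $M$, take the supremum of $L$ over the unit sphere (or disk) bundle crossed with the range of $u$, and apply the domination inequality along a minimizing geodesic parametrized by arclength, then swap the roles of $x$ and $y$. The only cosmetic difference is that you take the maximum over $\|v\|_x\le 1$ rather than $\|v\|_x=1$, which makes compactness of $K$ slightly more transparent but changes nothing substantive.
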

\begin{proof}
	For each $x$, $y\in M$, let $\gamma:[0,d(x,y)]\to M$ be a geodesic of length $d(x,y)$, parameterized by arclength and connecting $x$ to $y$. Since $M$ is compact and $u$ is continuous, then
	\[
	A_1:=\max_{x\in M}|u(x)|\quad A_2:=\sup\{L(x,u,\dot{x})\ |\ x\in M,\ |u|\leq A_1,\ \|\dot{x}\|_x=1\}
	\]
	are well-defined. 
	Since $\|\dot{\gamma}(s)\|_{\gamma(s)}=1$ for each $s\in[0,d(x,y)]$, we have $L(\gamma(s),u(\gamma(s)),\dot{\gamma}(s))\leq A_2$. Then by $u\prec L$,
	\begin{align*}
	u(\gamma(d(x,y)))-u(\gamma(0))\leq \int_0^{d(x,y)}L(\gamma(s),u(\gamma(s)),\dot{\gamma}(s))ds
	\leq \int_0^{d(x,y)}A_2ds=A_2d(x,y).
	\end{align*}
	We finish the proof by exchanging the roles of $x$ and $y$.
\end{proof}

\begin{lemma}\label{labell}
	Let $u\prec L$ and let $\gamma:[a,b]\rightarrow M$ be a $(u,L,0)$-calibrated curve. If $u$ is differentiable at $\gamma(t)$ for some $t\in (a,b)$, then we have
	\[
	H(\gamma(t),u(\gamma(t)),Du(\gamma(t)))=0,\qquad Du(\gamma(t))=\frac{\partial L}{\partial \dot{x}}(\gamma(t),u(\gamma(t)),\dot{\gamma}(t)).
	\]
\end{lemma}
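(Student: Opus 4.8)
The plan is to work in a local coordinate chart around the point $x_0:=\gamma(t)$, abbreviate $u_0:=u(\gamma(t))$ and $v_0:=\dot\gamma(t)$, and combine two ingredients: the calibration identity, which forces $s\mapsto u(\gamma(s))$ to be $C^1$, and the domination $u\prec L$ tested against straight-line perturbations of $\gamma$, which yields a one-sided Fenchel inequality. Together these identify $Du(x_0)$ as the Legendre dual of $v_0$ and force $H(x_0,u_0,Du(x_0))=0$.

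First I would note that, since $\gamma$ is $C^1$ and $u$ is continuous, the integrand $s\mapsto L(\gamma(s),u(\gamma(s)),\dot\gamma(s))$ is continuous on $[a,b]$; hence the calibration identity $u(\gamma(s_2))-u(\gamma(s_1))=\int_{s_1}^{s_2}L(\gamma(s),u(\gamma(s)),\dot\gamma(s))\,ds$ — which holds on every subinterval, since equality on $[a,b]$ together with $u\prec L$ rules out strict inequality on any piece — shows that $s\mapsto u(\gamma(s))$ is $C^1$ on $(a,b)$ with derivative $L(\gamma(s),u(\gamma(s)),\dot\gamma(s))$. Evaluating at the point $t$, where $u$ is differentiable at $\gamma(t)$, the chain rule gives $(u\circ\gamma)'(t)=\langle Du(x_0),v_0\rangle$, whence $\langle Du(x_0),v_0\rangle=L(x_0,u_0,v_0)$.

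Next I would fix an arbitrary $w\in T_{x_0}M$ and consider, in the chart, the curve $\sigma_h(s):=x_0+(s-t)w$, $s\in[t,t+h]$, for small $h>0$. Domination gives $u(\sigma_h(t+h))-u(x_0)\le\int_t^{t+h}L(\sigma_h(s),u(\sigma_h(s)),\dot\sigma_h(s))\,ds$. Since $u$ is differentiable at $x_0$, the left-hand side equals $h\langle Du(x_0),w\rangle+o(h)$, and since $L$, $u$, $\sigma_h$ are continuous and $\dot\sigma_h\equiv w$, the right-hand side equals $h\,L(x_0,u_0,w)+o(h)$; dividing by $h$ and letting $h\downarrow 0$ gives $\langle Du(x_0),w\rangle\le L(x_0,u_0,w)$ for every $w\in T_{x_0}M$. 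Thus $\langle w,Du(x_0)\rangle-L(x_0,u_0,w)\le 0$ for all $w$, with equality at $w=v_0$ by the identity of the previous step.

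Finally, since $H(x,u,\cdot)$ and $L(x,u,\cdot)$ are Legendre transforms of one another and satisfy the Tonelli conditions, the inverse transform reads $H(x_0,u_0,p)=\sup_{w\in T_{x_0}M}\{\langle w,p\rangle-L(x_0,u_0,w)\}$; taking $p=Du(x_0)$, the previous paragraph shows this supremum is $\le 0$ and is attained at $w=v_0$ with value $0$, so $H(x_0,u_0,Du(x_0))=0$ and $v_0$ is the maximizer, which by strict convexity of $L$ in $\dot{x}$ means exactly $Du(x_0)=\frac{\partial L}{\partial \dot{x}}(x_0,u_0,v_0)$. The only point I expect to require genuine care is the justification of the $o(h)$ estimates in the perturbation step — uniform continuity of $L$ on the relevant compact set, and the fact that $u(x_0+hw)-u(x_0)=h\langle Du(x_0),w\rangle+o(h)$ uses only differentiability of $u$ at $x_0$; everything else is the standard convex-duality bookkeeping of weak KAM theory.
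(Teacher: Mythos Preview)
Your proof is correct and follows essentially the same approach as the paper: the paper first shows $H(x,u(x),Du(x))\le 0$ at any differentiability point by testing domination against a curve with arbitrary initial velocity (your step~2), then uses the calibration identity to obtain $\langle Du(\gamma(t)),\dot\gamma(t)\rangle=L(\gamma(t),u(\gamma(t)),\dot\gamma(t))$ by a difference quotient (your step~1), and concludes via Legendre duality exactly as you do. The only cosmetic differences are the order of the two steps and your framing of the calibration consequence as ``$u\circ\gamma$ is $C^1$, then apply the chain rule'' rather than a direct limit of the difference quotient.
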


\begin{proof}
	By Lemma \ref{ulipp} $u$ is Lipschitz continuous on $M$. We first show that at each point $x\in M$ where $Du(x)$ exists, we have
	\begin{align}\label{4-100}
	H(x,u(x),Du(x))\leq 0.
	\end{align}
	For any given $v\in T_xM$, let $\alpha: [0,1]\rightarrow M$ be a $C^1$ curve such that $\alpha(0)=x$, $\dot{\alpha}(0)=v$. By $u\prec L$, for each $t\in [0,1]$, we have
	\[
	u(\alpha(t))-u(\alpha(0))\leq \int_0^tL(\alpha(s),u(\alpha(s)),\dot{\alpha}(s)))ds.
	\]
	Dividing by $t>0$ and let $t\rightarrow 0^+$, we have
	$\langle Du(x),v\rangle\leq L(x,u(x),v)$,
	which implies $	H(x,u(x),Du(x))=\sup_{v\in T_xM}(\langle Du(x),v\rangle_x-L(x,u(x),v))\leq 0$. Thus, (\ref{4-100}) holds.

	If $u$ is differentiable at $\gamma(t)$ for some $t\in (a,b)$, then for each $t'\in [a,b]$ with $t\leq t'$, we have
	$u(\gamma(t'))-u(\gamma(t))= \int_t^{t'}L(\gamma(s),u(\gamma(s)),\dot{\gamma}(s))ds$,
	since $\gamma:[a,b]\rightarrow M$ is a $(u,L,0)$-calibrated curve.
	Dividing by $t'-t$ and let $t'\rightarrow t^+$, we have
	$\langle Du(\gamma(t)),\dot{\gamma}(t)\rangle_{\gamma(t)}= L(\gamma(t),u(\gamma(t)),\dot{\gamma}(t))$.
	Thus, we have
	\[
	H(\gamma(t),u(\gamma(t)),Du(\gamma(t)))\geq \langle Du(\gamma(t)),\dot{\gamma}(t)\rangle_{\gamma(t)}-L(\gamma(t),u(\gamma(t)),\dot{\gamma}(t))=0,
	\]
	which together with (\ref{4-100}) implies  $H(\gamma(t),u(\gamma(t)),Du(\gamma(t)))=0$ and
	\[
	\langle Du(\gamma(t)),\dot{\gamma}(t)\rangle_{\gamma(t)}=H(\gamma(t),u(\gamma(t)),Du(\gamma(t)))+L(\gamma(t),u(\gamma(t)),\dot{\gamma}(t)).
	\]
	In view of Legendre transform, we get
	\[
	Du(\gamma(t))=\frac{\partial L}{\partial \dot{x}}(\gamma(t),u(\gamma(t),\dot{\gamma}(t)).
	\]
	This completes the proof.
\end{proof}

\begin{lemma}\label{diffe}
	Given any $a>0$, let $u\prec L$ and let $\gamma:[-a,a]\rightarrow M$ be a $(u,L,0)$-calibrated curve. Then $u$ is differentiable at $\gamma(0)$.
\end{lemma}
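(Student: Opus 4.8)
The plan is to sandwich $u$, near the point $x_0:=\gamma(0)$, between a locally semiconvex function and a locally semiconcave function that both agree with $u$ — and hence with each other — at $x_0$; differentiability of $u$ at $x_0$ then follows from the elementary fact that a semiconvex minorant touching a semiconcave majorant at an interior point forces both, and anything trapped between them, to be differentiable there. Put $u_0:=u(x_0)$, $y_1:=\gamma(a)$, $b_1:=u(y_1)$, $y_2:=\gamma(-a)$, $b_2:=u(y_2)$. First I would record two consequences of $u\prec L$, namely $u(y)\le h_{x,u(x)}(y,t)$ and — equivalently, via the reversibility identity $h_{x,u(x)}(y,t)=c\Leftrightarrow h^{y,c}(x,t)=u(x)$ together with monotonicity of $h^{y,\cdot}$ in the superscript — $u(x)\ge h^{y,u(y)}(x,t)$, for all $x,y\in M$ and $t>0$ (this is the variational form of domination, as in \cite{Fat-b} and \cite{WWY1}). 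Taking $(y,t)=(y_2,a)$ in the first and $(y,t)=(y_1,a)$ in the second yields
\[
h^{y_1,b_1}(x,a)\ \le\ u(x)\ \le\ h_{y_2,b_2}(x,a),\qquad x\in M.
\]

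Next I would check that equality holds at $x=x_0$ on both sides. For the right inequality: reparametrize the backward half of $\gamma$ as $\beta(s):=\gamma(s-a)$, $s\in[0,a]$, so that $\beta(0)=y_2$, $\beta(a)=x_0$, and $u(\beta(s))-u(\beta(0))=\int_0^s L(\beta(\sigma),u(\beta(\sigma)),\dot\beta(\sigma))\,d\sigma$ by calibration. Using $\beta$ as a competitor in the defining formula \eqref{baacf1} for $h_{y_2,b_2}(x_0,a)$, and bounding the integrand $L(\beta(\sigma),h_{y_2,b_2}(\beta(\sigma),\sigma),\dot\beta(\sigma))$ above by $L(\beta(\sigma),u(\beta(\sigma)),\dot\beta(\sigma))$ — legitimate because $h_{y_2,b_2}(\beta(\sigma),\sigma)\ge u(\beta(\sigma))$ by the domination inequality above and $L$ is decreasing in $u$ by (L3) — one gets $h_{y_2,b_2}(x_0,a)\le u(x_0)=u_0$, which with the sandwich forces $h_{y_2,b_2}(x_0,a)=u_0$. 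The same argument applied to $\gamma|_{[0,a]}$ gives $h_{x_0,u_0}(y_1,a)=b_1$, hence $h^{y_1,b_1}(x_0,a)=u_0$ by reversibility. So the three functions in the display coincide at $x_0$.

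Finally I would invoke the regularity of implicit action functions from \cite{WWY}: for fixed $t>0$, $x\mapsto h_{y_2,b_2}(x,t)$ is locally semiconcave with linear modulus while $x\mapsto h^{y_1,b_1}(x,t)$ is locally semiconvex. Reading everything in a chart about $x_0$, we then have a semiconvex function $\le u\le$ a semiconcave function with all three equal at $x_0$; choosing $p$ in the (nonempty) superdifferential of the majorant at $x_0$ and $q$ in the (nonempty) subdifferential of the minorant at $x_0$, the inequalities force $p=q$, so both one-sided differentials are singletons, whence the majorant, the minorant, and therefore $u$ are differentiable at $x_0=\gamma(0)$, with $Du(\gamma(0))=p$. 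I expect the main obstacle to be precisely the touching step $h_{y_2,b_2}(x_0,a)=u_0$ (and $h_{x_0,u_0}(y_1,a)=b_1$): though short, this is where domination must be combined with the strict monotonicity (L3) of $L$ in $u$, and it is what makes the two-sidedness of $\gamma$ (i.e.\ $a>0$ on both ends) essential; the only other non-formal input, the local semiconcavity/semiconvexity of the implicit action functions for $t>0$, is not automatic in the contact setting but is available from \cite{WWY}, after which the concluding squeeze argument is entirely soft.
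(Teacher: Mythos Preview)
Your proof is correct and takes a genuinely different route from the paper's. The paper works in a chart and, for each direction $y$, perturbs the calibrated curve on $[-\varepsilon,0]$ and on $[0,\varepsilon]$ to obtain
\[
\limsup_{\lambda\to 0^+}\frac{u(x+\lambda y)-u(x)}{\lambda}\le \frac{\partial L}{\partial \dot x}\big(x,u(x),\dot\gamma(0)\big)\cdot y\le \liminf_{\lambda\to 0^+}\frac{u(x+\lambda y)-u(x)}{\lambda},
\]
which is Fathi's classical ``needle variation'' argument and yields the derivative formula $Du(\gamma(0))=\tfrac{\partial L}{\partial \dot x}(\gamma(0),u(\gamma(0)),\dot\gamma(0))$ directly. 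You instead sandwich $u$ between the implicit action functions $h^{y_1,b_1}(\cdot,a)$ and $h_{y_2,b_2}(\cdot,a)$, check the touching at $x_0$ via calibration plus monotonicity of $L$ in $u$, and conclude by semiconvex/semiconcave squeezing. Your approach is more structural and immediately upgrades to a one-sided $C^{1,1}$-type statement (the touching modulus is inherited from the barriers), but it imports the semiconcavity of $x\mapsto h_{y_2,b_2}(x,a)$ and the semiconvexity of $x\mapsto h^{y_1,b_1}(x,a)$ as black boxes; the paper's argument is entirely self-contained.

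Two minor remarks. First, the touching step only uses that $L$ is \emph{non-increasing} in $u$: your own inequality $L(\beta,h_{y_2,b_2}(\beta,\sigma),\dot\beta)\le L(\beta,u(\beta),\dot\beta)$ needs just $\partial L/\partial u\le 0$, not the strict bound in (L3); so your comment that ``strict monotonicity (L3)'' is essential here overstates the requirement. Second, the semiconcavity/semiconvexity of the implicit action functions is not listed among the properties quoted from \cite{WWY} in this paper; it does follow, since $h_{x_0,u_0}(\cdot,t)$ is a viscosity solution of $w_t+H(x,w,w_x)=0$ and one can invoke \cite[Theorem 5.3.6]{CS} (as the paper does in Proposition~\ref{c1177}), with the dual transformation $\bar H(x,u,p)=H(x,-u,-p)$ handling $h^{x_0,u_0}$, but you should cite that rather than \cite{WWY}.
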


\begin{proof}
	It suffices to prove the lemma for the case when $M=U$ is an open subset of $\mathbb{R}^n$. Set $x=\gamma(0)$. In order to prove the differentiability of $u$ at $x$, we only need to show for each $y\in U$, there holds
	\begin{equation}\label{supinff}
	\limsup_{\lambda\rightarrow 0^+}\frac{u(x+\lambda y)-u(x)}{\lambda}\leq \frac{\partial L}{\partial \dot{x}}(x,u(x),\dot{\gamma}(0))\cdot y
	\leq \liminf_{\lambda\rightarrow 0^+}\frac{u(x+\lambda y)-u(x)}{\lambda}.
	\end{equation}
	For $\lambda>0$ and $0<\varepsilon\leq a$, define $\gamma_\lambda:[-\varepsilon,0]\rightarrow U$ by $\gamma_\lambda(s)=\gamma(s)+\frac{s+\varepsilon}{\varepsilon}\lambda y$. Then $\gamma_\lambda(0)=x+\lambda y$ and $\gamma_\lambda(-\varepsilon)=\gamma(-\varepsilon)$. Since $u\prec L$ and $\gamma:[-a,a]\rightarrow M$ is a $(u,L,0)$-calibrated curve,  we have
	\[
	u(x+\lambda y)-u(\gamma(-\varepsilon))\leq \int_{-\varepsilon}^{0}L(\gamma_\lambda(s),u(\gamma_\lambda(s)),\dot{\gamma}_\lambda(s))ds,
	\]
	and
	\[
	u(x)-u(\gamma(-\varepsilon))= \int_{-\varepsilon}^{0}L(\gamma(s),u(\gamma(s)),\dot{\gamma}(s))ds.
	\]
	It follows that
	\[
	\frac{u(x+\lambda y)-u(x)}{\lambda}\leq\frac{1}{\lambda}\int_{-\varepsilon}^{0}\Big(L(\gamma_\lambda(s),u(\gamma_\lambda(s)),\dot{\gamma}_\lambda(s))-L(\gamma(s),u(\gamma(s)),\dot{\gamma}(s))\Big)ds.
	\]
	By Lemma \ref{ulipp}, there exists $K>0$ such that

	\[
	|u(\gamma_\lambda(s))-u(\gamma(s))|\leq K\|\gamma_\lambda(s)-\gamma(s)\|=K\frac{s+\varepsilon}{\varepsilon}\lambda \|y\|,
	\]
	which implies
	\begin{align*}
	\limsup_{\lambda\rightarrow 0^+}\frac{u(x+\lambda y)-u(x)}{\lambda}&\leq\int_{-\varepsilon}^{0}\Big(\frac{s+\varepsilon}{\varepsilon}\frac{\partial L}{\partial x}(\gamma(s),u(\gamma(s)),\dot{\gamma}(s))\cdot y\\
	&+K\frac{s+\varepsilon}{\varepsilon}|\frac{\partial L}{\partial u}(\gamma(s),u(\gamma(s)),\dot{\gamma}(s))|\|y\|\\
	&+\frac{1}{\varepsilon}\frac{\partial L}{\partial \dot{x}}(\gamma(s),u(\gamma(s)),\dot{\gamma}(s))\cdot y \Big)ds.
	\end{align*}
	If we let $\varepsilon\rightarrow 0^+$, we get the first inequality in (\ref{supinff}).

	Define $\gamma_\lambda:[0,\varepsilon]\rightarrow M$ by $\gamma_\lambda(s)=\gamma(s)+\frac{\varepsilon-s}{\varepsilon}\lambda y$. We have
	\begin{align*}
	u(\gamma(\varepsilon))-u(x+\lambda y)&\leq \int^{\varepsilon}_{0}L(\gamma_\lambda(s),u(\gamma_\lambda(s)),\dot{\gamma}_\lambda(s))ds,\\
	u(\gamma(\varepsilon))-u(x)&= \int^{\varepsilon}_{0}L(\gamma(s),u(\gamma(s)),\dot{\gamma}(s))ds.
	\end{align*}
	It follows that
	\[
	\frac{u(x+\lambda y)-u(x)}{\lambda}\geq\frac{1}{\lambda}\int^{\varepsilon}_{0}\Big(L(\gamma(s),u(\gamma(s)),\dot{\gamma}(s))-L(\gamma_\lambda(s),u(\gamma_\lambda(s)),\dot{\gamma}_\lambda(s))\Big)ds,
	\]
	which implies
	\begin{align*}
	\liminf_{\lambda\rightarrow 0^+}\frac{u(x+\lambda y)-u(x)}{\lambda}&\geq\int^{\varepsilon}_{0}\Big(\frac{s-\varepsilon}{\varepsilon}\frac{\partial L}{\partial x}(\gamma(s),u(\gamma(s)),\dot{\gamma}(s))\cdot y\\
	&+K\frac{s-\varepsilon}{\varepsilon}|\frac{\partial L}{\partial u}(\gamma(s),u(\gamma(s)),\dot{\gamma}(s))|\|y\|\\
	&+\frac{1}{\varepsilon}\frac{\partial L}{\partial \dot{x}}(\gamma(s),u(\gamma(s)),\dot{\gamma}(s))\cdot y\Big)ds.
	\end{align*}
	Letting $\varepsilon\rightarrow 0^+$, we obtain the second inequality in (\ref{supinff}).
	This completes the proof.
\end{proof}

\begin{proposition}\label{inva}
	Given any $x\in M$,
	if $\gamma:(-\infty,0]\rightarrow M$ is a  $(u_-,L,0)$-calibrated curve  with $\gamma(0)=x$, then $\big(\gamma(t),u_-(\gamma(t)),p(t)\big)$ satisfies equations \eqref{c} on $(-\infty,0)$, where $p(t)=\frac{\partial L}{\partial \dot{x}}(\gamma(t),u_-(\gamma(t)),\dot{\gamma}(t))$.
	Moreover, we have
	\[
	\big(\gamma(t+s),u_-(\gamma(t+s)),Du_-(\gamma(t+s)\big)=\Phi_{s}\big(\gamma(t),u_-(\gamma(t)),Du_-(\gamma(t)\big), \quad\forall t, \ s<0,
	\]
	and
	\[
	H\big(\gamma(t),u_-(\gamma(t)),\frac{\partial L}{\partial \dot{x}}(\gamma(t),u_-(\gamma(t)),\dot{\gamma}(t))\big)=0,\quad\forall t< 0.
	\]
\end{proposition}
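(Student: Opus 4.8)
The plan is to promote the calibrated curve $\gamma$ into a genuine orbit of \eqref{c} by combining the pointwise differentiability carried by Lemmas \ref{diffe} and \ref{labell} with the variational description of the backward solution semigroup, and then to read off the flow identity from uniqueness of solutions of \eqref{c}.

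First, I would record the differentiability of $u_-$ along $\gamma$. Being a backward weak KAM solution, $u_-$ satisfies $u_-\prec L$. Fix $t<0$ and choose $a\in(0,-t]$, so that $[t-a,t+a]\subset(-\infty,0]$; subtracting the calibration identities at the two endpoints shows that $s\mapsto\gamma(t+s)$, $s\in[-a,a]$, is again $(u_-,L,0)$-calibrated, so Lemma \ref{diffe} gives that $u_-$ is differentiable at $\gamma(t)$. Since $t$ is an interior point of a calibration interval, Lemma \ref{labell} then yields
\[
H\big(\gamma(t),u_-(\gamma(t)),Du_-(\gamma(t))\big)=0,\qquad Du_-(\gamma(t))=\frac{\partial L}{\partial\dot x}\big(\gamma(t),u_-(\gamma(t)),\dot\gamma(t)\big)=p(t),
\]
for every $t<0$. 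This already proves the last displayed identity of the proposition and identifies $p(t)$ with $Du_-(\gamma(t))$ throughout $(-\infty,0)$.

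Next, I would show that $\gamma$ restricted to any $[t_1,t_2]\subset(-\infty,0]$, reparametrized to $[0,t_2-t_1]$, is a minimizing curve for $T^-_{t_2-t_1}$ acting on $u_-$. Indeed, by Proposition \ref{pr4.5}(i) one has $T^-_\tau u_-=u_-$ for all $\tau\ge0$, so using the curve $\tau\mapsto\gamma(t_1+\tau)$ as a competitor in the variational formula for $T^-_{t_2-t_1}u_-(\gamma(t_2))$ and replacing $T^-_\tau u_-$ by $u_-$ in the integrand,
\[
u_-(\gamma(t_2))=T^-_{t_2-t_1}u_-(\gamma(t_2))\le u_-(\gamma(t_1))+\int_{t_1}^{t_2}L\big(\gamma(s),u_-(\gamma(s)),\dot\gamma(s)\big)\,ds=u_-(\gamma(t_2)),
\]
the last equality because $\gamma$ is calibrated. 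Hence equality holds and $\gamma|_{[t_1,t_2]}$ achieves the infimum defining $T^-_{t_2-t_1}u_-(\gamma(t_2))$. By the property of the backward solution semigroup recalled in Section 2 --- a curve achieving such an infimum is $C^1$ and lifts, via $u(s)=T^-_s u_-(\gamma(s))=u_-(\gamma(s))$ and $p(s)=\frac{\partial L}{\partial\dot x}(\gamma(s),u(s),\dot\gamma(s))$, to a solution of \eqref{c} --- the triple $\big(\gamma(s),u_-(\gamma(s)),p(s)\big)$ satisfies \eqref{c} on $[t_1,t_2]$; as $t_1<t_2\le0$ are arbitrary, it satisfies \eqref{c} on $(-\infty,0)$. (Alternatively, via Proposition \ref{pr4.2}(1) one sees that $\gamma|_{[t_1,t_2]}$ is a minimizer of $h_{\gamma(t_1),u_-(\gamma(t_1))}(\gamma(t_2),t_2-t_1)=u_-(\gamma(t_2))$, so $(\gamma(t),u_-(\gamma(t)))$ is globally minimizing and Proposition \ref{pr22} applies directly.) Finally, the flow identity is immediate: by the previous steps $t\mapsto\big(\gamma(t),u_-(\gamma(t)),Du_-(\gamma(t))\big)$ is an integral curve of the autonomous contact vector field on $(-\infty,0)$, so for all $t,s<0$ (hence $t+s<0$), uniqueness for \eqref{c} gives $\big(\gamma(t+s),u_-(\gamma(t+s)),Du_-(\gamma(t+s))\big)=\Phi_s\big(\gamma(t),u_-(\gamma(t)),Du_-(\gamma(t))\big)$.

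The step I expect to be the main obstacle is the passage from ``$\gamma$ achieves the infimum in $T^-_{t_2-t_1}u_-$'' to ``$(\gamma,u_-\circ\gamma,p)$ solves the full system \eqref{c}'': one must know that such a minimizing curve is $C^1$ and that its Legendre lift --- with $u$-coordinate exactly $u_-(\gamma(\cdot))$ --- verifies the $\dot p$-equation of \eqref{c}, not merely the $\dot x$- and $\dot u$-equations. This rests on the interplay, established in \cite{WWY,WWY1}, between the semigroups $\{T^\pm_t\}$, the implicit action functions $h_{x_0,u_0}$, the Markov property and the $C^1$-regularity of action minimizers (Theorem \ref{IVP}, Proposition \ref{pr4.2}). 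Once this is in hand, the remaining assertions follow from Lemmas \ref{diffe}, \ref{labell} and ODE uniqueness.
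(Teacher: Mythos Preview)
Your proof is correct and follows the same overall strategy as the paper: show the calibrated curve is a minimizer, lift it to an orbit of \eqref{c} via the regularity of action minimizers, and invoke Lemmas~\ref{diffe} and~\ref{labell} for differentiability and the energy identity. The only real difference is in how minimality is extracted. The paper works with the implicit action function and proves $u_-(\gamma(t))=h_{\gamma(s),u_-(\gamma(s))}(\gamma(t),t-s)$; the inequality $\leq$ comes from $T^-_{t-s}u_-=u_-$, but the reverse inequality requires a comparison using (L3) (replacing $u_-(\gamma(\tau))$ by the larger $h_{\gamma(s),u_-(\gamma(s))}(\gamma(\tau),\tau-s)$ in the integrand decreases $L$). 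Your main route sidesteps this monotonicity step by working directly with the variational formula for $T^-$: since $T^-_\tau u_-=u_-$, the integrand along $\gamma$ is already $L(\gamma,u_-\circ\gamma,\dot\gamma)$, so calibration yields equality in the infimum immediately. Your parenthetical ``alternative'' is exactly the paper's route, but note that the equality $h_{\gamma(t_1),u_-(\gamma(t_1))}(\gamma(t_2),t_2-t_1)=u_-(\gamma(t_2))$ does \emph{not} follow from Proposition~\ref{pr4.2}(1) alone---that gives only $\leq$---and one needs the (L3) comparison you omitted; this is the step you correctly flagged as the ``main obstacle'', and your primary argument avoids it.
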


\begin{proof}
	Let $\bar{u}(t):=u_-(\gamma(t))$ for $t\leq 0$. We assert that
		for each $s$, $t\in\mathbb{R}_-$ with $s<t$, there holds
		\begin{align}\label{neggmin}
		\bar{u}(t)=h_{\gm(s),\bar{u}(s)}(\gm(t),t-s).
		\end{align}

If the assertion is true, then by Proposition \ref{pr22},  $\big(\gamma(t),\bar{u}(t),p(t)\big)$ satisfies equations (\ref{c}) on $(-\infty,0)$, where $p(t)=\frac{\partial L}{\partial \dot{x}}(\gamma(t),\bar{u}(t),\dot{\gamma}(t))$. Now we prove the assertion. Since $u_-$ is a backward weak KAM solution, then we have $
	T^-_{\sigma}u_-(x)=u_-(x), \forall x\in M, \forall \sigma\geq 0.$
	Recall  that $T^-_{\sigma}u_-(x)=\inf_{y\in M}h_{y,u_-(y)}(x,{\sigma})$ for all $\sigma>0$. Given any $s<t\leq 0$, we get
	\begin{align}\label{4-101}
	\bar{u}(\tau)\leq h_{\gamma(s),\bar{u}(s)}(\gamma(\tau),\tau-s),\quad \forall \tau\in (s,t].
	\end{align}
	Since 	$\gamma:(-\infty,0]\rightarrow M$ is a  $(u_-,L,0)$-calibrated curve, then we have
	\[
	\bar{u}(t)-\bar{u}(s)=\int_s^tL(\gamma(\tau),\bar{u}(\tau),\dot{\gamma}(\tau))d\tau,
	\]
	which together with (\ref{4-101}) implies
	\[
	\bar{u}(t)\geq \bar{u}(s)+\int_s^tL(\gamma(\tau),h_{\gamma(s),\bar{u}(s)}(\gamma(\tau),\tau-s),\dot{\gamma}(\tau))d\tau\geq h_{\gamma(s),\bar{u}(s)}(\gamma(t),t-s).
	\]
	By (\ref{4-101}) again, we have 
	$\bar{u}(t)=h_{\gamma(s),\bar{u}(s)}(\gamma(t),t-s).$
	Hence, (\ref{neggmin}) holds.

	By Lemmas \ref{labell} and Lemma \ref{diffe}, $u_-$ is differentiable at $\gamma(t)$ for any $t<0$ and
	\[
	Du_-(\gamma(t))=\frac{\partial L}{\partial \dot{x}}(\gamma(t),u_-(\gamma(t)),\dot{\gamma}(t)).
	\]
	Hence,
	$
	(\gamma(t+s),u_-(\gamma(t+s)),Du_-(\gamma(t+s))=\Phi_{s}(\gamma(t),u_-(\gamma(t)),Du_-(\gamma(t)))$, $\forall t,\ s<0.$
	In view of Lemma \ref{labell}, we have
	\[
	H\big(\gamma(t),u_-(\gamma(t)),\frac{\partial L}{\partial \dot{x}}(\gamma(t),u_-(\gamma(t)),\dot{\gamma}(t))\big)=0,\quad \forall t<0,
	\]
	which completes the proof.
\end{proof}

%

\subsection{Proof of Main Result \ref{pair11}}

\begin{lemma}\label{uplus}
For each $t\geq 0$, $T_t^+u_-\leq u_-$.
\end{lemma}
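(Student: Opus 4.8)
The plan is to combine the fixed-point identity satisfied by $u_-$ with the representation of the forward solution semigroup through the backward implicit action function. By Proposition \ref{pr4.5}(i), $u_-\in\mathcal{S}_-$ gives $T^-_tu_-=u_-$ for all $t\ge 0$, and by Proposition \ref{pr4.2}(1) this reads $u_-(x)=\inf_{z\in M}h_{z,u_-(z)}(x,t)$ for all $t>0$. Again by Proposition \ref{pr4.2}(1), $T^+_tu_-(x)=\sup_{y\in M}h^{y,u_-(y)}(x,t)$ for $t>0$. So, after disposing of the trivial case $t=0$ (where $T^+_0$ is the identity), it suffices to show $h^{y,u_-(y)}(x,t)\le u_-(x)$ for every $x,y\in M$ and $t>0$; the conclusion then follows by taking the supremum over $y$.

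To prove this pointwise bound, fix $x,y\in M$, $t>0$, and set $u_0:=h^{y,u_-(y)}(x,t)$. By the duality between the forward and backward implicit action functions recorded just after Theorem \ref{IVP}, this is equivalent to $h_{x,u_0}(y,t)=u_-(y)$. On the other hand, the identity $u_-(y)=\inf_{z\in M}h_{z,u_-(z)}(y,t)$ above, evaluated with $z=x$, yields $u_-(y)\le h_{x,u_-(x)}(y,t)$. Hence $h_{x,u_0}(y,t)\le h_{x,u_-(x)}(y,t)$, and the strict monotonicity of $a\mapsto h_{x,a}(y,t)$ (Monotonicity, property (1)(i) of the forward implicit action function, which reflects $\partial H/\partial u>0$) forces $u_0\le u_-(x)$. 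This is exactly the desired inequality.

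I expect the argument to be essentially routine; the only point that needs care is applying the monotonicity in the correct direction. For completeness I note an alternative, self-contained variational argument: for an absolutely continuous $\gamma:[0,t]\to M$ with $\gamma(0)=x$, the definition of $T^+_tu_-(x)$ involves $T^+_{t-\sigma}u_-(\gamma(\sigma))$, and since (L3) makes $L$ strictly decreasing in $u$, an a priori bound $T^+_ru_-\le u_-$ for $r<t$ gives $L(\gamma(\sigma),T^+_{t-\sigma}u_-(\gamma(\sigma)),\dot\gamma(\sigma))\ge L(\gamma(\sigma),u_-(\gamma(\sigma)),\dot\gamma(\sigma))$ for a.e.\ $\sigma$; together with $u_-\prec L$ this forces $T^+_tu_-(x)\le u_-(x)$, and one closes the argument by a continuity/connectedness sweep over the admissible times (the single time $\sigma=0$ being a harmless null set). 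I would present the first approach, since it is shorter and avoids the continuation step.
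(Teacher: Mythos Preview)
Your proof is correct and follows essentially the same approach as the paper: reduce to the pointwise inequality $h^{y,u_-(y)}(x,t)\le u_-(x)$, invoke the duality $h^{y,u_-(y)}(x,t)=u_0\Leftrightarrow h_{x,u_0}(y,t)=u_-(y)$, compare with $u_-(y)\le h_{x,u_-(x)}(y,t)$ from the fixed-point identity $u_-=T_t^-u_-$, and conclude via monotonicity of $a\mapsto h_{x,a}(y,t)$. The only cosmetic difference is notation (the paper writes $v(y)$ for your $u_0$).
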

\begin{proof}
By Proposition \ref{pr4.1}, $T_0^+u_-(x)=u_-(x)$. For $t>0$, we have
\[T^{+}_tu_-(x)=\sup_{y\in M}h^{y,u_-(y)}(x,t).\]
It suffices to prove that for each $y\in M$, $h^{y,u_-(y)}(x,t)\leq u_-(x)$.
 Fix $(x,t)\in M\times (0,+\infty)$. Let $v(y):=h^{y,u_-(y)}(x,t)$. Then $u_-(y)=h_{x,v(y)}(y,t)$. Since
\[u_-(y)=T_t^-u_-(y)=\inf_{z\in M}h_{z,u_-(z)}(y,t),\quad \forall t>0,\]
which implies $u_-(y)\leq h_{x,u_-(x)}(y,t)$. Then $h_{x,v(y)}(y,t)\leq h_{x,u_-(x)}(y,t)$. By the monotonicity, we have $v(y)\leq u_-(x)$ for each $y\in M$, it follows that $h^{y,u_-(y)}(x,t)\leq u_-(x)$.
\end{proof}

Define
\[
G_{u_-}:=\mathrm{cl}\Big(\big\{(x,u,p): x\ \text{is\ a\ point of differentiability\ of}\ u_-,\  u=u_-(x),\ p=Du_-(x)\big\}\Big).
\]
From Lemma \ref{ulipp}, the Legendrian pseudograph $G_{u_-}$ is well-defined.
Let $\tilde{\Sigma}:=\bigcap_{t\geq 0}\Phi_{-t}(G_{u_-})$ and $\Sigma:=\pi\tilde{\Sigma}$, where $\pi:T^*M\times\R\rightarrow M$ denotes the orthogonal projection. Note that for $s>0$, we have
\begin{align*}
\Phi_s(\tilde{\Sigma})=\Phi_s\left(\bigcap_{t\geq 0}\Phi_{-t}(G_{u_-})\right)=\bigcap_{t\geq 0}\Phi_{-t+s}(G_{u_-})\subset \bigcap_{t\geq 0}\Phi_{-t}(G_{u_-})=\tilde{\Sigma}.
\end{align*}
It is clear  that $\tilde{\Sigma}$ is a non-empty, compact and $\Phi_t$-invariant subset of $T^*M\times\R$.

\begin{lemma}\label{ufixplus}
For each $t\geq 0$, $T_t^+u_-=u_-$ on $\Sigma$.
\end{lemma}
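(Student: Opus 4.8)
The plan is to combine Lemma \ref{uplus} with the maximality characterisation of the backward implicit action function. Lemma \ref{uplus} already yields $T_t^+u_-\leq u_-$ on all of $M$, and the case $t=0$ is trivial, so it suffices to prove that $T_t^+u_-(x)\geq u_-(x)$ for every $x\in\Sigma$ and every $t>0$.

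First I would fix $x\in\Sigma$ and $t>0$ and use the definition $\Sigma=\pi\tilde\Sigma$ to pick $p$ with $(x,u,p)\in\tilde\Sigma$. Since $\tilde\Sigma\subset G_{u_-}$ and every point of the pseudograph $G_{u_-}$ is, by definition, a limit of points of the form $(x_n,u_-(x_n),Du_-(x_n))$, continuity of $u_-$ (Lemma \ref{ulipp}) forces $u=u_-(x)$. Next I would set $(x(s),u(s),p(s)):=\Phi_s(x,u_-(x),p)$; because $\tilde\Sigma=\bigcap_{\tau\geq0}\Phi_{-\tau}(G_{u_-})$, the forward orbit satisfies $\Phi_s(x,u_-(x),p)\in G_{u_-}$ for all $s\geq0$, it is therefore defined on all of $[0,t]$ (the orbit stays in the compact set $G_{u_-}$), and by the same remark on the $u$-coordinate of points of $G_{u_-}$ one gets $u(s)=u_-(x(s))$ for $s\in[0,t]$; in particular $u(0)=u_-(x)$.

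Then, writing $y:=x(t)$, the curve $(x(s),u(s),p(s))$ is a solution of \eqref{c} on $[0,t]$ with $x(0)=x$, $x(t)=y$ and $u(t)=u_-(x(t))=u_-(y)$, hence it belongs to the set $S_{x,t}^{y,u_-(y)}$ from the maximality property of the backward implicit action function. That property then gives $h^{y,u_-(y)}(x,t)\geq u(0)=u_-(x)$, and therefore
\[
T_t^+u_-(x)=\sup_{z\in M}h^{z,u_-(z)}(x,t)\geq h^{y,u_-(y)}(x,t)\geq u_-(x),
\]
which together with Lemma \ref{uplus} finishes the proof.

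The argument is short, and I do not expect a serious obstacle; the one point to be careful about is that the $u$-coordinate of an arbitrary point of $G_{u_-}$ coincides with $u_-$ evaluated at its base point, and that the forward orbit through a point of $\tilde\Sigma$ remains in $G_{u_-}$ — which is precisely what membership in $\tilde\Sigma$ encodes, and is exactly why $x$ must lie in $\Sigma$. It is worth noting that the naive alternative, feeding the orbit curve $x(\cdot)$ directly into the supremum defining $T_t^+u_-(x)$, does not work: since $L$ is decreasing in $u$ and $T^+_{t-\tau}u_-\leq u_-$, the resulting comparison goes the wrong way, so passing through the implicit action function and its maximality property is essential.
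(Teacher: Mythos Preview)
Your proof is correct and reaches the same conclusion as the paper, but by a genuinely shorter route. Both arguments start identically: reduce to showing $T_t^+u_-(x)\geq u_-(x)$ for $x\in\Sigma$ and $t>0$, pick $(x,u_-(x),p)\in\tilde\Sigma$, flow forward to $(x(s),u(s),p(s))=\Phi_s(x,u_-(x),p)$, and use invariance to get $u(s)=u_-(x(s))$. The divergence is in how the inequality $h^{y,u_-(y)}(x,t)\geq u_-(x)$ (with $y=x(t)$) is obtained. The paper proves the stronger \emph{equality} $h_{x,u_-(x)}(x(t),t)=u(t)$ (equivalently $h^{x(t),u_-(x(t))}(x,t)=u_-(x)$): the direction $\leq$ is the minimality property of the forward implicit action function, and $\geq$ is established by a Gronwall-type contradiction comparing $u_-(\gamma(s))$ with $h_{x,u}(\gamma(s),s)$ along a minimizer $\gamma$ of $h_{x,u}(x(t),t)$. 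You instead invoke directly the maximality characterisation of the \emph{backward} implicit action function (property~(2) in Section~2.2): since the orbit $(x(s),u(s),p(s))$ belongs to $S_{x,t}^{y,u_-(y)}$, one has $h^{y,u_-(y)}(x,t)\geq u(0)=u_-(x)$ immediately. This bypasses the Gronwall step entirely; the paper's argument yields slightly more (an equality rather than an inequality), but that extra information is not used for the lemma.
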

\begin{proof}
Similar to $T_t^-$, we have $T_0^+u_-=u_-$. By Lemma \ref{uplus}, we only need to prove $T_t^+u_-\geq u_-$ on $\Sigma$ for each $t>0$. For any $x\in \Sigma$, let $u:=u_-(x)$. Then there exists  $p\in T_x^*M$ such that $(x,u,p)\in \tilde{\Sigma}$. Fix $t>0$, let $(x(t),u(t),p(t)):=\Phi_t(x,u,p)$ with $(x(0),u(0),p(0))=(x,u,p)$.  First of all, we will prove
\begin{equation}\label{huxuxu1}
h^{x(t),u_-(x(t))}(x,t)=u.
\end{equation}
The invariance of $\tilde{\Sigma}$ shows $u(t)=u_-(x(t))$. In order to show \eqref{huxuxu1}, it suffices to show  $h_{x,u}(x(t),t)=u(t)$. By the minimality of $h_{x,u}(x(t),t)$, we deduce that $h_{x,u}(x(t),t)\leq u(t)$. By contradiction, we assume $h_{x,u}(x(t),t)<u(t)$. Let $\gm:[0,t]\rightarrow M$ be a minimizer of $h_{x,u}(x(t),t)$ with $\gm(t)=x(t)$ and $\gm(0)=x$. Let $F(s):=u_-(\gm(s))-h_{x,u}(\gm(s),s)$, for $s\in [0,t]$. $F(s)$ is continuous. Since $F(t)>0$ and $F(0)=0$, then one can find $s_0\in [0,t)$ such that $F(s_0)=0$ and $F(s)>0$ for $s\in (s_0,t]$. Note that
\[h_{x,u}(\gm(s),s)=h_{x,u}(\gm(s_0),s_0)+\int_{s_0}^sL(\gm(\tau),h_{x,u}(\gm(\tau),\tau),\dot{\gm}(\tau))d\tau,\]
\[u_-(\gm(s))\leq u_-(\gm(s_0))+\int_{s_0}^sL(\gm(\tau),u_-(\gm(\tau)),\dot{\gm}(\tau))d\tau.\]
It follows that
\[F(s)\leq \lambda\int_{s_0}^sF(\tau)d\tau,\]
which implies $F(s)=0$ for each $s\in [s_0,t]$. In particular, $F(t)=0$,  a contradiction.
By (\ref{huxuxu1}), we have
\[T_t^+u_-(x)=\sup_{y\in M}h^{y,u_-(y)}(x,t)\geq h^{x(t),u_-(x(t))}(x,t)=u.\]
This completes the proof.
\end{proof}

\begin{lemma}\label{ueel}
For $T^+_tu_-$, we have
\begin{itemize}
\item [(1)] \text{Uniform boundedness}:  there exists a constant $K>0$ independent of $t$ such that for $t>1$, \[\|T_t^+u_-\|_\infty\leq K;\]
\item [(2)] \text{Equi-Lipschitz continuity}:  there exists a constant $\kappa>0$ independent of $t$ such that for $t> 2$, the function $x\mapsto T_t^+u_-(x)$ is $\kappa$-Lipschitz continuous on $M$.
\end{itemize}
\end{lemma}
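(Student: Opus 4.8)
The plan is to establish (1) first and then deduce (2) from it by a standard end‑point modification.

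\smallskip
\noindent\emph{Part (1).} The upper bound is immediate: by Lemma \ref{uplus}, $T^+_tu_-\leq u_-\leq\max_M u_-$. For the lower bound I would use the representation $T^+_tu_-(x)=\sup_{z\in M}h^{z,u_-(z)}(x,t)$ from Proposition \ref{pr4.2}(1) and show that \emph{each} term $h^{z,u_-(z)}(x,t)$ is bounded below by a constant independent of $x,z$ and of $t>1$. Fix $z$ and $t>1$, pick $p_z\in D^*u_-(z)$ so that $(z,u_-(z),p_z)\in G_{u_-}$. Since $1-t\leq0$, Main Result \ref{one} gives $\Phi_{1-t}(z,u_-(z),p_z)\in G_{u_-}$; write it as $(y,u_-(y),p_y)$, noting that every point of $G_{u_-}$ lies in $\{u=u_-(x)\}$, so its $u$‑coordinate is $u_-(y)\in[m,M_0]:=[\min_Mu_-,\max_Mu_-]$. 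Re‑parametrising the orbit arc $\{\Phi_s(z,u_-(z),p_z):s\in[1-t,0]\}$ on $[0,t-1]$ yields an orbit of \eqref{c} from $(y,u_-(y),p_y)$ to $(z,u_-(z),p_z)$ with terminal $u$‑value $u_-(z)$, so the maximality property of the backward implicit action function gives $h^{z,u_-(z)}(y,t-1)\geq u_-(y)\geq m$. Then, by the Markov property and the monotonicity of $h^{\cdot,\cdot}(\cdot,\cdot)$ in its second argument,
\[
h^{z,u_-(z)}(x,t)=\sup_{w\in M}h^{w,h^{z,u_-(z)}(w,t-1)}(x,1)\geq h^{y,h^{z,u_-(z)}(y,t-1)}(x,1)\geq h^{y,m}(x,1).
\]
By the local Lipschitz continuity of the backward implicit action function, $(y,x)\mapsto h^{y,m}(x,1)$ is continuous on the compact set $M\times M$, hence bounded below by some $-C_0$. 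Therefore $T^+_tu_-(x)\geq -C_0$ for all $x\in M$ and $t>1$, and (1) holds with $K:=\max\{M_0,C_0\}$.

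\smallskip
\noindent\emph{Part (2).} With (1) in hand I would first upgrade the bound to $\|T^+_su_-\|_\infty\leq K'$ for \emph{all} $s\geq0$, using that $(s,x)\mapsto T^+_su_-(x)$ is continuous on the compact set $[0,1]\times M$. Fix $t>2$ and let $\gamma:[0,t]\to M$, $\gamma(0)=x$, be a curve achieving $T^+_tu_-(x)$ (it exists by Proposition \ref{pr4.2}(1) and Theorem \ref{IVP}). Along $\gamma$ the $u$‑coordinate equals $T^+_{t-\tau}u_-(\gamma(\tau))$, hence is bounded by $K'$, and the cost $\int_0^tL(\gamma(\tau),T^+_{t-\tau}u_-(\gamma(\tau)),\dot\gamma(\tau))\,d\tau=u_-(\gamma(t))-T^+_tu_-(x)$ is bounded by $2K'$ in absolute value; by the a priori speed estimate for extremals of Tonelli systems (the estimates of \cite{WWY} for minimizers of implicit action functions), $|\dot\gamma(\tau)|\leq R=R(K',L)$ on $[0,t]$. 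Given $x'$ with $\delta:=d(x,x')<1$, let $\gamma'$ be a minimizing geodesic from $x'$ to $\gamma(\delta)$ on $[0,\delta]$ (speed $\leq(\delta+R\delta)/\delta=1+R$) followed by $\gamma|_{[\delta,t]}$. Since $t-\tau>1$ for $\tau\in[0,\delta]$, all $u$‑coordinates appearing in the cost of $\gamma'$ on $[0,\delta]$ are bounded by $K$, so with $B:=\sup\{|L(y,u,v)|:y\in M,\ |u|\leq K',\ \|v\|\leq 1+R\}$, comparing the optimal $\gamma$ for $x$ with the competitor $\gamma'$ for $x'$ gives
\[
T^+_tu_-(x)-T^+_tu_-(x')\leq\int_0^\delta L(\gamma',\cdot,\dot\gamma')\,d\tau-\int_0^\delta L(\gamma,\cdot,\dot\gamma)\,d\tau\leq 2B\delta=2B\,d(x,x').
\]
Exchanging $x$ and $x'$, and handling $d(x,x')\geq1$ trivially via the $C^0$ bound, yields equi‑Lipschitz continuity with $\kappa:=\max\{2B,2K\}$, independent of $t$.

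\smallskip
\noindent The one genuinely delicate point is the uniform‑in‑$t$ lower bound in (1): naive competitors such as constant curves fail because the $u$‑coupling lets the $u$‑coordinate escape to $-\infty$ over long times. The device above avoids this by peeling off a single unit of time and parking the remaining $t-1$ units of orbit on the flow‑invariant pseudograph $G_{u_-}$, where Main Result \ref{one} pins the $u$‑coordinate to the bounded function $u_-$; the rest is compactness on a fixed unit interval. Part (2) is then routine, modulo quoting the a priori speed bound, which belongs to the variational machinery of \cite{WWY}.
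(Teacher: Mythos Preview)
Your proof is correct, but both parts take a more laborious route than the paper's.

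For Part (1), the paper exploits Lemma~\ref{ufixplus} (already proved just before): picking any $y\in\Sigma$, one has $T^+_{t-1}u_-(y)=u_-(y)$, so the semigroup identity $T^+_t=T^+_1\circ T^+_{t-1}$ gives
\[
T^+_tu_-(x)=\sup_{z\in M}h^{z,T^+_{t-1}u_-(z)}(x,1)\ge h^{y,u_-(y)}(x,1),
\]
a fixed continuous function on $M$. Your device---flowing $(z,u_-(z),p_z)$ backward along $G_{u_-}$ via Main Result~\ref{one} and invoking the maximality property of $h^{\cdot,\cdot}$---reaches the same conclusion, but effectively re-derives what Lemma~\ref{ufixplus} already packages. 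Both arguments ultimately rest on Main Result~\ref{one}.

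For Part (2), the contrast is sharper. The paper's argument is essentially two lines: with the same representation $T^+_tu_-(x)=\sup_{z}h^{z,T^+_{t-1}u_-(z)}(x,1)$ and the bound $|T^+_{t-1}u_-(z)|\le K$ from Part (1), the Lipschitz continuity of $(z,v,x)\mapsto h^{z,v}(x,1)$ on the compact set $M\times[-K,K]\times M$ immediately gives
\[
|T^+_tu_-(x)-T^+_tu_-(y)|\le\sup_{z}\big|h^{z,T^+_{t-1}u_-(z)}(x,1)-h^{z,T^+_{t-1}u_-(z)}(y,1)\big|\le\kappa\,d(x,y).
\]
Your end-point modification works, but it hinges on a uniform-in-$t$ speed bound for the extremals of $T^+_tu_-$. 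That bound is obtainable---restricting $\gamma$ to $[0,1]$ yields a minimizer of $h^{\gamma(1),u(1)}(x,1)$ with $u(1)\in[-K',K']$, and the a~priori estimates of \cite{WWY} on a fixed unit interval then control $|\dot\gamma|$---but this extra step is precisely what the paper's semigroup decomposition sidesteps. Your approach has the virtue of being self-contained at the variational level; the paper's buys brevity by leaning on the Lipschitz regularity of the implicit action function.
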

\begin{proof}
(1) By Lemma \ref{uplus} and the compactness of $M$, $T_t^+u_-(x)$ is bounded from above.
On the other hand, for any given $y\in {\Sigma}$ and any $t>1$, from Lemma \ref{ufixplus} we have
\begin{align*}
T_t^+u_-(x)=T_1^+\circ T_{t-1}^+u_-(x)
=\sup_{z\in M}h^{z,T_{t-1}^+u_-(z)}(x,1)
\geq h^{y,T_{t-1}^+u_-(y)}(x,1)
= h^{y,u_-(y)}(x,1),
\end{align*}
which implies that $T_t^+u_-(x)$ is bounded form below. Denote by $K>0$ a constant such that $\|T_t^+u_-\|_\infty\leq K$.

(2) Note that
\begin{align*}
|T_t^+u_-(x)-T_t^+u_-(y)|&=|\sup_{z\in M}h^{z,T_{t-1}^+u_-(z)}(x,1)-\sup_{z\in M}h^{z,T_{t-1}^+u_-(z)}(y,1)|\\
&\leq \sup_{z\in M}|h^{z,T_{t-1}^+u_-(z)}(x,1)-h^{z,T_{t-1}^+u_-(z)}(y,1)|.
\end{align*}
Since $h^{\cdot,\cdot}(\cdot,1)$ is  Lipschitz on $M\times [-K,K]\times M$ with Lipschitz constant $\kappa>0$, we get
\[|T_t^+u_-(x)-T_t^+u_-(y)|\leq \kappa d(x,y), \quad \forall t>2.\]
\end{proof}

Next, we complete the proof of Main Result \ref{pair11}.
By Lemma \ref{uplus} and Lemma \ref{ueel}, the uniform limit $\lim_{t\rightarrow+\infty}T_t^+u_-$ exists. Define 
\[u_+:=\lim_{t\rightarrow+\infty}T_t^+u_-.\] 
It follows from Proposition \ref{pr4.1} that for any given $t\geq 0$, we get
\[\|T_{t+s}^+u_--T_t^+u_+\|_\infty\leq e^{\lambda t}\|T_s^+u_--u_+\|_\infty.\]
Letting $s\rightarrow +\infty$, we have
\[T_t^+u_+(x)=u_+(x),\quad \forall x\in M.\]
By Proposition \ref{pr4.5}, $u_+\in \mathcal{S}_+$. By Proposition \ref{pr4.6} and the uniqueness of the backward weak KAM solution of (\ref{hj}), we have
$u_-=\lim_{t\rightarrow+\infty}T_t^-u_+$.

By Lemma \ref{uplus}, $u_+\leq u_-$ on $M$. By Lemma \ref{ufixplus}, $u_+=\lim_{t\rightarrow+\infty}T_t^+u_-=u_-$ on $\Sigma$. To finish the proof of Main Result \ref{pair11}, it remains to show
\[u_+(x)=\sup_{v_+\in \mathcal{S}_+}v_+(x),\quad \forall x\in M.
\]
Note that $T_t^+v_+=v_+$ for each $t\geq 0$ and $u_+=\lim_{t\rightarrow+\infty}T_t^+u_-$. By the monotonicity, it suffices to prove $v_+\leq u_-$ for each $v_+\in \mathcal{S}_+$. Assume by contradiction that there exists $x_0\in M$ such that $v_+(x_0)>u_-(x_0)$. Since
\[v_+(x_0)=T_t^+v_+(x_0)=\sup_{y\in M}h^{y,v_+(y)}(x_0,t),\quad \forall t>0,\]
then $h^{x_0,v_+(x_0)}(x_0,t)\leq v_+(x_0)$, which implies $v_+(x_0)\leq h_{x_0,v_+(x_0)}(x_0,t)$. By Proposition \ref{pr4.655}, 
\[v_+(x_0)\leq \lim_{t\rightarrow +\infty}h_{x_0,v_+(x_0)}(x_0,t)=u_-(x_0),\]
a contradiction.
Now the proof of Main Result \ref{pair11} is complete.

\subsection{Proof of Main Result \ref{ten}}
Denote by $u_+$ the maximal forward weak KAM solution obtained in Main Result \ref{pair11} and by $v_+$  an arbitrary forward weak KAM solution.
For each $v_+\in \mathcal{S}_+$, define
\[\mathcal{I}_{v_+}:=\{x\in M\ |\ u_{-}(x)=v_{+}(x)\}.
\]
By Main Result \ref{pair11},
it is clear that $\mathcal{I}_{u_+}$ is non-empty, compact and $\Sigma\subset \mathcal{I}_{u_{+}}$.

\begin{lemma}\label{iinv}
	For any given $x\in M$ with $u_-(x)=u_+(x)$,  there exists a $C^1$ curve $\gamma:(-\infty,+\infty)\rightarrow M$ with $\gamma(0)=x$ such that  $u_-(\gamma(t))=u_+(\gamma(t))$ for each $t\in \mathbb{R}$, and
	\begin{equation}\label{upmm}
	u_{\pm}(\gamma(t'))-u_{\pm}(\gamma(t))=\int_t^{t'}L(\gamma(s),u_{\pm}(\gamma(s)),\dot{\gamma}(s))ds, \quad \forall t\leq t'\in\mathbb{R}.
	\end{equation}
	Moreover, $u_{\pm}$ are differentiable at $x$ with the same derivative $Du_{\pm}(x)=\frac{\partial L}{\partial \dot{x}}(x,u_{\pm}(x),\dot{\gamma}(0))$.
\end{lemma}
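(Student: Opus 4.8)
The plan is to construct $\gamma$ by splicing together a backward calibrated curve of $u_-$ and a forward calibrated curve of $u_+$ through $x$, to check that the two halves agree in the $u$-coordinate, and then to upgrade the spliced curve to class $C^1$ so that Lemma \ref{diffe} applies at the junction. Since $u_-\in\mathcal{S}_-$, Definition \ref{bwkam} supplies a $(u_-,L,0)$-calibrated $C^1$ curve $\gamma_-:(-\infty,0]\to M$ with $\gamma_-(0)=x$, and since $u_+\in\mathcal{S}_+$ it supplies a $(u_+,L,0)$-calibrated $C^1$ curve $\gamma_+:[0,+\infty)\to M$ with $\gamma_+(0)=x$. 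The three things I need are: (a) $u_-\equiv u_+$ along each of $\gamma_-,\gamma_+$; (b) the spliced curve carries the calibration identity \eqref{upmm} for both $u_-$ and $u_+$; (c) the spliced curve is of class $C^1$.

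For (a), recall $u_-\ge u_+$ on $M$ by Main Result \ref{pair11}, so $F(t):=u_-(\gamma_-(t))-u_+(\gamma_-(t))\ge0$ and $F(0)=0$. For $t<t'\le0$ I subtract the domination inequality $u_+\prec L$ along $\gamma_-|_{[t,t']}$ from the calibration identity of $u_-$ along $\gamma_-$ to get
\[
F(t')-F(t)\ \ge\ \int_t^{t'}\Big(L\big(\gamma_-(s),u_-(\gamma_-(s)),\dot{\gamma}_-(s)\big)-L\big(\gamma_-(s),u_+(\gamma_-(s)),\dot{\gamma}_-(s)\big)\Big)\,ds\ \ge\ -\lambda\int_t^{t'}F(s)\,ds,
\]
where the last inequality uses $-\lambda\le\frac{\partial L}{\partial u}<0$ from (L3) together with $F\ge0$. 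Taking $t'=0$ gives $F(t)\le\lambda\int_t^0F(s)\,ds$, and a Gr\"onwall (integrating-factor) argument, exactly as in the proof of Lemma \ref{ufixplus}, forces $F\equiv0$ on $(-\infty,0]$. Symmetrically, $\tilde{F}(t):=u_-(\gamma_+(t))-u_+(\gamma_+(t))\ge0$ has $\tilde{F}(0)=0$, and subtracting the domination inequality for $u_-$ from the calibration identity of $u_+$ gives $\tilde{F}(t')-\tilde{F}(t)\le0$ for $0\le t\le t'$ (again by (L3) and $u_-\ge u_+$); being non-increasing, nonnegative and zero at the origin, $\tilde{F}\equiv0$ on $[0,+\infty)$.

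For (b), since $u_-=u_+$ along $\gamma_-$ (resp. $\gamma_+$) the integrands $L(\gamma_\pm(s),u_-(\gamma_\pm(s)),\dot{\gamma}_\pm(s))$ and $L(\gamma_\pm(s),u_+(\gamma_\pm(s)),\dot{\gamma}_\pm(s))$ coincide, so $\gamma_-$ is also $(u_+,L,0)$-calibrated on $(-\infty,0]$ and $\gamma_+$ is also $(u_-,L,0)$-calibrated on $[0,+\infty)$. Setting $\gamma(t):=\gamma_-(t)$ for $t\le0$ and $\gamma(t):=\gamma_+(t)$ for $t\ge0$, the curve $\gamma$ is continuous with $u_-(\gamma(t))=u_+(\gamma(t))$ for all $t\in\mathbb{R}$, and for $t\le0\le t'$ adding the calibration identities over $[t,0]$ and $[0,t']$ gives \eqref{upmm} for both $u_-$ and $u_+$; for $t\le t'$ with the same sign it is immediate.

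The hard part will be (c): a priori $\gamma_-$ and $\gamma_+$ are built from two different weak KAM solutions, so the one-sided velocities at $t=0$ need not match, and this is the only genuinely nontrivial point. I would establish it by showing that, for small $a>0$, the spliced arc $\gamma|_{[-a,a]}$ is a minimizer of the forward implicit action $h_{\gamma(-a),u_-(\gamma(-a))}(\gamma(a),2a)$, and then invoking Theorem \ref{IVP}, by which such minimizers are $C^1$. The value of this action equals $u_-(\gamma(a))$: the inequality ``$\le$'' follows by evaluating it along $\gamma|_{[-a,a]}$ after checking, via Proposition \ref{inva} on $[-a,0]$, the forward analogue of Proposition \ref{inva} together with the transfer relation $h^{y,v}(z,\tau)=w\iff h_{z,w}(y,\tau)=v$ on $[0,a]$, and the Markov property with intermediate point $x$, that $h_{\gamma(-a),u_-(\gamma(-a))}(\gamma(s),s+a)=u_-(\gamma(s))$ for all $s\in[-a,a]$; the inequality ``$\ge$'' follows from $\inf_z h_{z,u_-(z)}(\,\cdot\,,2a)=T^-_{2a}u_-=u_-$ (Propositions \ref{pr4.2} and \ref{pr4.5}). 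Hence $\gamma|_{[-a,a]}$ attains the minimum and is a minimizer, so $\gamma$ is of class $C^1$. Finally, once $\gamma$ is $C^1$ the arc $\gamma|_{[-a,a]}$ is an honest $(u_-,L,0)$-calibrated and $(u_+,L,0)$-calibrated curve, so Lemmas \ref{labell} and \ref{diffe} give that $u_-$ and $u_+$ are differentiable at $x=\gamma(0)$ with $Du_\pm(x)=\frac{\partial L}{\partial\dot{x}}\big(x,u_\pm(x),\dot{\gamma}(0)\big)$; since $u_-(x)=u_+(x)$ these two derivatives coincide, which completes the proof.
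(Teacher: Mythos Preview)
Your proof is correct and follows the same overall architecture as the paper's: splice a backward $(u_-,L,0)$-calibrated curve $\gamma_-$ with a forward $(u_+,L,0)$-calibrated curve $\gamma_+$ through $x$, show $u_-=u_+$ along both halves by combining calibration, domination, $u_-\ge u_+$, and (L3), and then read off the calibration identity \eqref{upmm} and the differentiability of $u_\pm$ at $x$ via Lemmas~\ref{labell} and~\ref{diffe}. Your monotonicity argument for $\tilde F$ along $\gamma_+$ is in fact slightly cleaner than the paper's contradiction-plus-Gronwall version.

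The one genuine point of divergence is how you handle the $C^1$ regularity of the spliced curve at $t=0$. The paper simply writes ``in view of Lemmas~\ref{labell} and~\ref{diffe}, $\gamma$ is of class $C^1$'', implicitly relying on the fact that the proof of Lemma~\ref{diffe} goes through for a piecewise $C^1$ calibrated curve and simultaneously forces $\dot\gamma(0^-)=\dot\gamma(0^+)$ (since the two one-sided inequalities in \eqref{supinff} pin down the same linear functional, and the Legendre transform is injective). You instead show that $\gamma|_{[-a,a]}$ realizes $h_{\gamma(-a),u_-(\gamma(-a))}(\gamma(a),2a)$ and invoke Theorem~\ref{IVP} to conclude $\gamma$ is $C^1$. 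Your route is longer but more self-contained: it does not require extending Lemma~\ref{diffe} beyond its stated $C^1$ hypothesis, and it makes transparent why the two one-sided velocities must agree. The paper's route is quicker once one unpacks what the proof of Lemma~\ref{diffe} actually gives. Either argument is sound.
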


\begin{proof}
	For any given $x\in M$ with $u_-(x)=u_+(x)$, there exist a $(u_-,L,0)$-calibrated curve $\gamma_{-}:(-\infty,0]\rightarrow M$ and a $(u_+,L,0)$-calibrated curve  $\gamma_+:[0,+\infty)\rightarrow M$ with $\gamma_{-}(0)=\gamma_+(0)=x$.
	Let $\gamma:(-\infty,+\infty)\rightarrow M$ be the curve which is equal to $\gamma_-$ on $(-\infty,0]$ and to $\gamma_+$ on $[0,+\infty)$.

	We will show that $\gamma$ is the curve we need.
	In order to prove that, it suffices to show that  $u_-(\gamma(t))=u_+(\gamma(t))$ for each $t\in \mathbb{R}$, i.e.,
	\begin{align}\label{5-100}
	u_+(\gamma_+(s))=u_-(\gamma_+(s)),\quad \forall s\in [0,+\infty),
	\end{align}
	\begin{align}\label{5-101}
	u_+(\gamma_-(s))=u_-(\gamma_-(s)),\quad \forall s\in (-\infty,0].
	\end{align}
	In fact, if $u_-(\gamma(t))=u_+(\gamma(t))$ for each $t\in \mathbb{R}$, then by the definition of calibrated curves,  (\ref{upmm}) holds true. Thus, in view of Lemmas \ref{labell} and \ref{diffe}, $\gamma$ is of class $C^1$ and
	$u_{\pm}$ are differentiable at $x$ with the same derivative $Du_{\pm}(x)=\frac{\partial L}{\partial \dot{x}}(x,u_{\pm}(x),\dot{\gamma}(0))$.

	Therefore, to complete the proof of the lemma,
	we only need to show that (\ref{5-100}) and (\ref{5-101}) hold true. Since $u_+(x)\leq u_-(x)$ for each $x\in M$, it remains to prove: (1) $u_+(\gamma_+(s))\geq u_-(\gamma_+(s))$ for each $s\in [0,+\infty)$; (2) $u_+(\gamma_-(s))\geq u_-(\gamma_-(s))$ for each $s\in (-\infty,0]$. Since the proof of (2) is quite similar to the one for (1), we omit it here.

	Assume by contradiction that there exists $s_0\in (0,+\infty)$ such that  $u_+(\gamma_+(s_0))< u_-(\gamma_+(s_0))$. Let $F(\tau)=u_-(\gamma_+(\tau))-u_+(\gamma_+(\tau))$. Since $F(s_0)>0$ and $F(0)=u_-(x)-u_+(x)=0$, then one can find $\tau_0\in[0,s_0)$ such that $F(\tau_0)=0$ and $F(\tau)>0$ for $\tau\in (\tau_0,s_0]$. Note that for each $\tau\in (\tau_0,s_0]$, we get
	\[
	u_+(\gamma_+(\tau))-u_+(\gamma_+(\tau_0))=\int_{\tau_0}^\tau L(\gamma_+(\sigma),u_+(\gamma_+(\sigma)),\dot{\gamma}_+(\sigma))d\sigma,
	\]
	\[
	u_-(\gamma_+(\tau))-u_-(\gamma_+(\tau_0))\leq \int_{\tau_0}^\tau L(\gamma_+(\sigma),u_-(\gamma_+(\sigma)),\dot{\gamma}_+(\sigma))d\sigma,
	\]
	which imply $F(\tau)\leq \lambda\int_{\tau_0}^\tau F(\sigma)d\sigma$ for each $\tau\in (\tau_0,s_0]$. Using Gronwall inequality, we have $F(\tau)\equiv 0$ for $\tau\in (\tau_0,s_0]$, which contradicts $F(s_0)>0$.
\end{proof}

\begin{proposition}\label{c1177}
Both $u_-$ and $u_+$ are of class $C^{1,1}$   on $\mathcal{I}_{u_{+}}$.
\end{proposition}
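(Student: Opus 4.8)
The plan is to obtain $C^{1,1}$ regularity on $\mathcal I_{u_+}$ by showing that near each of its points the semiconcave function $u_-$ is trapped between two quadratic polynomials sharing the same linear part, the lower bound coming from the semiconvex function $u_+$ lying below $u_-$. To begin, I would recall that, exactly as in classical Tonelli theory, the representations $u_-=T^-_tu_-$ and $u_+=T^+_tu_+$ (Proposition \ref{pr4.5}) together with the $C^1$ regularity and the a priori estimates for minimizers supplied by Theorem \ref{IVP} force $u_-$ to be locally semiconcave with linear modulus on $M$ and $u_+$ to be locally semiconvex on $M$. Since $\mathcal I_{u_+}$ is compact (Main Result \ref{pair11}), one may fix finitely many coordinate charts covering a neighborhood of it and a single constant $C>0$ that serves simultaneously as a semiconcavity constant for $u_-$ and a semiconvexity constant for $u_+$ in all of them; it then suffices to argue inside one such chart, identified with an open subset of $\mathbb R^n$.

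The key step is a uniform pointwise second-order estimate. Fix $x_0\in\mathcal I_{u_+}$. By Lemma \ref{iinv}, $u_-$ and $u_+$ are differentiable at $x_0$ with $Du_-(x_0)=Du_+(x_0)=:p_0$ and $u_-(x_0)=u_+(x_0)$. Semiconcavity of $u_-$ gives $u_-(x_0+h)\le u_-(x_0)+\langle p_0,h\rangle+C\|h\|^2$ for all small $h$, while semiconvexity of $u_+$, combined with $u_-\ge u_+$ on $M$ (Main Result \ref{pair11}) and $u_-(x_0)=u_+(x_0)$, gives $u_-(x_0+h)\ge u_+(x_0+h)\ge u_-(x_0)+\langle p_0,h\rangle-C\|h\|^2$. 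Hence
\[
\big|u_-(x_0+h)-u_-(x_0)-\langle p_0,h\rangle\big|\le C\|h\|^2
\]
for all small $h$, and the same estimate holds for $u_+$. Thus at every point of $\mathcal I_{u_+}$ both functions have a second-order Taylor expansion with quadratic remainder controlled by the single constant $C$.

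The final step is the elementary fact that such a uniform expansion makes $Du_-$ Lipschitz on $\mathcal I_{u_+}$. Take $x_0,x_1\in\mathcal I_{u_+}$ close to each other, set $h=x_1-x_0$, and let $k$ be any vector with $\|k\|\le\|h\|$. Applying the expansion for $u_-$ at $x_0$ with increments $h$ and $h+k$, and at $x_1$ with increment $k$, and subtracting, the terms $u_-(x_0)$, $\langle p_0,h\rangle$ and $u_-(x_1)$ cancel and one is left with $\big|\langle Du_-(x_0)-Du_-(x_1),k\rangle\big|\le 6C\|h\|^2$. Choosing $k=\|h\|\,(Du_-(x_0)-Du_-(x_1))/\|Du_-(x_0)-Du_-(x_1)\|$ (there is nothing to prove when the two gradients agree) yields $\|Du_-(x_0)-Du_-(x_1)\|\le 6C\|x_0-x_1\|$. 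For points of $\mathcal I_{u_+}$ that are not close the inequality is trivial, since $u_-$ is Lipschitz (Lemma \ref{ulipp}) so all gradients lie in a fixed compact set while $\|x_0-x_1\|$ is bounded below; combining the two regimes and patching the finitely many charts produces a global Lipschitz constant for $Du_-|_{\mathcal I_{u_+}}$, and the identical argument applies to $u_+$.

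I expect the genuine obstacle to be the first step: verifying in the \emph{contact} framework that $u_-$ is semiconcave and $u_+$ semiconvex with a \emph{linear} modulus. The competitor-curve construction that estimates the second difference of the implicit action function $h_{y,u_-(y)}(\cdot,t)$ now carries the extra $u$-dependence of $L$, which must be absorbed by Gronwall-type estimates along the minimizers, using again their $C^1$ regularity from Theorem \ref{IVP} and the bound $\big|\frac{\partial L}{\partial u}\big|\le\lambda$ from (L3). Once these one-sided moduli are available, the remaining two steps are soft and purely real-analytic.
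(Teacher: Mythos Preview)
Your strategy is correct and coincides with the paper's: both arguments rest on (i) $u_-$ being semiconcave with linear modulus, (ii) $u_+$ being semiconvex with linear modulus, and (iii) the sandwich $u_+\le u_-$ with equality on $\mathcal I_{u_+}$ and matching gradients there (Lemma \ref{iinv}). The differences are only in how the steps are executed. For (i)--(ii), the paper simply cites \cite[Theorem 5.3.6]{CS} for the semiconcavity of $u_-$, and obtains the semiconvexity of $u_+$ by the change of variables $v:=-u_+$, which turns $u_+$ into a viscosity solution of $\bar H(x,v,Dv)=0$ with $\bar H(x,u,p):=H(x,-u,-p)$; since $\bar H$ again satisfies the hypotheses of \cite[Theorem 5.3.6]{CS}, $v$ is semiconcave and hence $u_+$ is semiconvex. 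This bypasses the competitor-curve/Gronwall computation you flag as the obstacle. For the final conclusion, the paper invokes \cite[Corollary 3.3.8]{CS}, whereas you prove the Lipschitz estimate on $Du_-|_{\mathcal I_{u_+}}$ by hand via the two-sided quadratic bound; your argument is in fact the substance behind that citation in the present setting (two distinct functions agreeing on a closed set rather than one function on an open set), so your version is arguably cleaner here. Either route gives the result.
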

\Proof
For $x\in {\mathcal{I}}_{u_{+}}$, we have $T_t^-u_-(x)=u_-(x)=u_+(x)=T_t^+u_+(x)$. By \cite[Theorem 5.3.6]{CS}, $u_-(x)$ is  semiconcave with a linear modulus in ${\mathcal{I}}_{u_{+}}$.
Let $u(x):=u_-(x)=u_+(x)$ for $x\in {\mathcal{I}}_{u_{+}}$. If $u_+(x)$ is semiconvex with a linear modulus in ${\mathcal{I}}_{u_{+}}$, then by  \cite[Corollary 3.3.8]{CS}, $u(x)$ is of class $C^{1,1}$ on  ${\mathcal{I}}_{u_{+}}$.

It remains to prove that $u_+=T_t^+u_+$ is   semiconvex with a linear modulus in ${\mathcal{I}}_{u_{+}}$. Since $u_+\in \mathcal{S}_+$, for any given $t>0$, we have
\begin{equation*}
u_+(x)=\sup_{\gamma(0)=x}\left\{u_+(\gamma(t))-\int_0^tL(\gamma(\tau),u_+(\gamma(\tau)),\dot{\gamma}(\tau))d\tau\right\}.
\end{equation*}
Let $\bar{\gm}(\tau):=\gm(t-\tau)$ for $\tau\in [0,t]$. Then
\[-u_+(x)=\inf_{\bar{\gm}(t)=x}\left\{-u_+(\bar{\gamma}(0))+\int_0^tL(\bar{\gamma}(\tau),u_+(\bar{\gamma}(\tau)),-\dot{\bar{\gamma}}(\tau))d\tau\right\}.
\]
Let $v(x):=-u_+(x)$ for each $x\in M$. Then we have
\[v(x)=\inf_{\bar{\gm}(t)=x}\left\{v(\bar{\gamma}(0))+\int_0^tL(\bar{\gamma}(\tau),-v(\bar{\gamma}(\tau)),-\dot{\bar{\gamma}}(\tau))d\tau\right\}.
\]
Let $\bar{H}(x,u,p):=H(x,-u,-p)$. It follows that $v(x)$ is a viscosity solution of $\bar{H}(x,v,Dv)=0$. Note that $\bar{H}(x,u,p)$ satisfies the assumptions in \cite[Theorem 5.3.6]{CS}. Thus, $v(x)$ is  semiconcave with a linear modulus in ${\mathcal{I}}_{u_{+}}$. Therefore, $u_+(x)=-v(x)$ is semiconvex with a linear modulus in ${\mathcal{I}}_{u_{+}}$.
\End

For each $v_+\in\mathcal{S}_+$, define
\[
\tilde{\mathcal{I}}_{v_{+}}:=\left\{(x,u,p)\in T^*M\times\R:\ x\in \mathcal{I}_{v_+},\ u=u_-(x)=v_+(x),\ p=Du_-(x)=Dv_+(x) \right\}.
\]
From Lemma \ref{iinv}, it is clear that $\tilde{\mathcal{I}}_{u_{+}}\neq \emptyset$, compact and $\Phi_t$-invariant. By the definitions of $\tilde{\mathcal{I}}_{u_+}$, $G_{u_-}$ and $G_{u_+}$, we have
\begin{align}\label{500}
\tilde{\mathcal{I}}_{u_+}=G_{u_-}\cap G_{u_+}.
\end{align}
By Proposition \ref{c1177}, we have

\begin{corollary}\label{c1188}
The projection $\pi:T^*M\times\R\rightarrow M$ induces a bi-Lipschitz homeomorphism from $\tilde{\mathcal{I}}_{u_{+}}$ to $\mathcal{I}_{u_{+}}$.
\end{corollary}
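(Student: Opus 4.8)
The plan is to realize $\tilde{\mathcal I}_{u_+}$ as the graph over $\mathcal I_{u_+}$ of a Lipschitz map whose inverse is the restriction of $\pi$. By Lemma \ref{iinv}, for every $x\in\mathcal I_{u_+}$ the function $u_-$ is differentiable at $x$ with $Du_-(x)=Du_+(x)$, so by the definition of $\tilde{\mathcal I}_{u_+}$ the map
\[
\Psi:\mathcal I_{u_+}\to T^*M\times\R,\qquad \Psi(x):=\big(x,u_-(x),Du_-(x)\big),
\]
is well defined and has image exactly $\tilde{\mathcal I}_{u_+}$. Clearly $\pi\circ\Psi=\mathrm{id}_{\mathcal I_{u_+}}$, while $\Psi\circ\big(\pi|_{\tilde{\mathcal I}_{u_+}}\big)=\mathrm{id}_{\tilde{\mathcal I}_{u_+}}$ because every point of $\tilde{\mathcal I}_{u_+}$ has the form $(x,u_-(x),Du_-(x))$. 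Hence $\pi|_{\tilde{\mathcal I}_{u_+}}$ is a bijection onto $\mathcal I_{u_+}$ with inverse $\Psi$, and it remains to show that both maps are Lipschitz. That $\pi$, being an orthogonal projection, is $1$-Lipschitz is immediate; and $\Psi$, written in components, has identity first component, $u_-$ as second component (Lipschitz on $M$ by Lemma \ref{ulipp}), and $x\mapsto Du_-(x)$ as third component. So everything reduces to the Lipschitz continuity of $Du_-$ on $\mathcal I_{u_+}$.

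This last point is exactly what Proposition \ref{c1177} provides: $u_-$ is of class $C^{1,1}$ on $\mathcal I_{u_+}$, so its gradient is Lipschitz there. If one prefers to avoid quoting \cite{CS} at this stage, the estimate can be obtained directly from the semiconcavity/semiconvexity bounds used in the proof of Proposition \ref{c1177}. The semiconcavity with linear modulus of $u_-$ gives $u_-(z)\le u_-(x)+\langle Du_-(x),z-x\rangle+C\,d(z,x)^2$ near any $x\in\mathcal I_{u_+}$, and the semiconvexity with linear modulus of $u_+$ gives $u_+(z)\ge u_+(x)+\langle Du_+(x),z-x\rangle-C\,d(z,x)^2$; since $u_+\le u_-$ on $M$ (Lemma \ref{uplus}) while $u_+(x)=u_-(x)$ and $Du_+(x)=Du_-(x)$ at $x\in\mathcal I_{u_+}$, the second inequality becomes a lower bound for $u_-$, yielding the two-sided second-order estimate
\[
\big|u_-(z)-u_-(x)-\langle Du_-(x),z-x\rangle\big|\le C\,d(z,x)^2
\]
for \emph{all} $z$ in a neighborhood of $x$, not merely for $z\in\mathcal I_{u_+}$. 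Applying this at two points $x,y\in\mathcal I_{u_+}$ and testing with $z$ along the direction of $Du_-(x)-Du_-(y)$ gives $\|Du_-(x)-Du_-(y)\|\le C'\,d(x,y)$ whenever $d(x,y)$ is below a radius $\rho>0$ that may be chosen uniformly over the compact set $\mathcal I_{u_+}$; for $d(x,y)\ge\rho$ the bound is trivial since $Du_-$ is bounded ($u_-$ being Lipschitz). A finite cover of $\mathcal I_{u_+}$ by coordinate charts turns these Euclidean estimates into a genuine Lipschitz bound on $M$.

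Combining the above, $\Psi$ is Lipschitz and $\pi|_{\tilde{\mathcal I}_{u_+}}$ is $1$-Lipschitz with Lipschitz inverse $\Psi$, hence a bi-Lipschitz homeomorphism of $\tilde{\mathcal I}_{u_+}$ onto $\mathcal I_{u_+}$. The main obstacle is precisely the Lipschitz regularity of $Du_-$ on the (generally non-open) set $\mathcal I_{u_+}$; once Proposition \ref{c1177} is available this is essentially formal, and the sketch above shows how to recover it by hand.
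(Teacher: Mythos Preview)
Your proof is correct and follows the same route the paper intends: the paper states Corollary~\ref{c1188} immediately after Proposition~\ref{c1177} with nothing more than ``By Proposition~\ref{c1177}, we have'', so the content is precisely that $\pi|_{\tilde{\mathcal I}_{u_+}}$ has Lipschitz inverse $x\mapsto(x,u_-(x),Du_-(x))$ once $Du_-$ is Lipschitz on $\mathcal I_{u_+}$. Your write-up simply makes this explicit, and the optional semiconcave/semiconvex sandwich you sketch is the standard way to recover the $C^{1,1}$ conclusion of Proposition~\ref{c1177} (and of \cite[Corollary~3.3.8]{CS}) by hand; it is extra but not a different approach.
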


\begin{remark}\label{rem4.10} Some explanations for Lemma \ref{iinv}, Proposition \ref{c1177} and Corollary \ref{c1188}:
\begin{itemize}
	\item Let us observe that Lemma \ref{iinv}, Proposition \ref{c1177} and Corollary \ref{c1188} still hold true if $u_+$ is replaced by $v_+$, since we have not used the maximality property of $u_+$ in the proofs. 
	\item The non-emptiness of $\mathcal{I}_{v_+}$ and
$\tilde{\mathcal{I}}_{v_+}$ will be shown by Corollary \ref{omeggaa}.
\end{itemize}

\end{remark}
Proposition \ref{c1177} and Remark \ref{rem4.10} imply Main Result \ref{ten}.


\subsection{Proof of Main Result \ref{aubrys}}

Recall that $\tilde{\Sigma}:=\bigcap_{t\geq 0}\Phi_{-t}(G_{u_-})$, $\mathcal{I}_{v_+}:=\{x\in M\ |\ u_{-}(x)=v_{+}(x)\}$ and that 
\begin{align*}
\tilde{\mathcal{I}}_{u_{+}}:=\left\{(x,u,p)\in T^*M\times\R:\ x\in \mathcal{I}_{u_+},\ u=u_-(x)=u_+(x),\ p=Du_-(x)=Du_+(x) \right\}.
\end{align*}

\begin{lemma}\label{keey}
	Let $(x(\cdot),u(\cdot)):\mathbb{R}\to M\times\mathbb{R}$ be a static curve. Then  $u(t)=u_-(x(t))$ for all $t\in \R$.
\end{lemma}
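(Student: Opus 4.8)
The plan is to show the two inequalities $u(t)\le u_-(x(t))$ and $u(t)\ge u_-(x(t))$ for all $t$, exploiting the fact that a static curve is globally minimizing (hence, by Proposition \ref{pr22}, an orbit of $\Phi_t$ projecting to a genuine action minimizer) together with Proposition \ref{lem3.1}, which gives $u(t)=h_{x(s),u(s)}(x(t),+\infty)$ for all $s,t\in\R$.

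First I would prove $u(t)\le u_-(x(t))$. Fix $t\in\R$. Since $(x(\cdot),u(\cdot))$ is static, Proposition \ref{lem3.1} gives $u(t)=h_{x(s),u(s)}(x(t),+\infty)$ for every $s<t$; equivalently $u(t)=\lim_{\sigma\to+\infty}h_{x(s),u(s)}(x(t),\sigma)$. On the other hand, the restriction $x(\cdot)|_{[s,t]}$ is a curve joining $x(s)$ to $x(t)$, and since $u_-\prec L$ we have, using the calibration/domination inequality along this curve together with the minimality characterization of the implicit action function (property (2) of $h_{x_0,u_0}$), that $u_-(x(t))\ge h_{x(s),u_-(x(s))}(x(t),t-s)$ for every $s<t$. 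Hmm — that bound involves $u_-(x(s))$ as the initial height, not $u(s)$, so the cleanest route is different: use instead that $u_-=T^-_{t-s}u_-$, i.e. $u_-(x(t))=\inf_{y}h_{y,u_-(y)}(x(t),t-s)\le h_{x(s),u_-(x(s))}(x(t),t-s)$, and compare the two initial heights. So I first need to control $u(s)$ versus $u_-(x(s))$, which suggests handling both inequalities simultaneously via a single auxiliary function, exactly as in the proofs of Lemma \ref{ufixplus} and Lemma \ref{iinv}.

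Concretely, I would argue as follows. For the inequality $u(t)\le u_-(x(t))$: by Proposition \ref{pr4.655}, $u_-(x(t))=h_{x_0,u_0}(x(t),+\infty)$ for any base point; choosing the base point along the static orbit, $u_-(x(t))=\lim_{\sigma\to+\infty}h_{x(s),u_-(x(s))}(x(t),\sigma)$. Meanwhile, since $x(\cdot)|_{[s,s']}$ realizes $h_{x(s),u(s)}(x(s'),s'-s)=u(s')$ (globally minimizing) and since $u_-\prec L$ along this same curve with $\dot u_-(x(\tau))\,$-type comparison, one obtains via the monotonicity of $h$ in its initial height (property (1)(i)) and Gronwall's inequality applied to $F(\tau):=u_-(x(\tau))-u(\tau)$ on a suitable interval — mimicking the sign-tracking argument of Lemma \ref{iinv} and Lemma \ref{ufixplus} — that if $F$ ever becomes positive it must have been positive throughout, contradicting $\inf_{s}F(s)\ge$ (something); and symmetrically for $-F$. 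The key structural input is that $\tfrac{\partial L}{\partial u}$ is bounded (condition (L3)), so the Gronwall constant is $\lambda$.

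The main obstacle, and the step I would spend the most care on, is pinning down the right comparison point in time so that the Gronwall argument closes off into a genuine contradiction: for $u(t)\le u_-(x(t))$ one wants to run $s\to-\infty$ and use that $h_{x(s),u(s)}(x(t),t-s)=u(t)$ stays constant while $h_{x(s),u_-(x(s))}(x(t),\sigma)$ can be made $\ge u_-(x(t))-\epsilon$ by taking $\sigma$ large (Proposition \ref{pr4.655}), whereas for $u(t)\ge u_-(x(t))$ one uses the static defining property $u(t)=\inf_{\sigma>0}h_{x(s),u(s)}(x(t),\sigma)\le h_{x(s),u(s)}(x(t),t-s)$ combined with $u_-(x(t))=\inf_y h_{y,u_-(y)}(x(t),t-s)\le h_{x(s),u_-(x(s))}(x(t),t-s)$ and a monotonicity comparison in the initial height. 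Assembling these into a clean two-line estimate (rather than a case analysis) is the delicate bookkeeping; once the roles of $s$, $t$, and $\sigma$ are fixed correctly, each individual inequality is a direct consequence of monotonicity of the implicit action function, domination $u_-\prec L$, Proposition \ref{lem3.1}, and Proposition \ref{pr4.655}.
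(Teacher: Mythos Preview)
You have all the ingredients on the table but you are walking past a one-line proof. Proposition~\ref{pr4.655} says that $h_{x_0,u_0}(\cdot,+\infty)=u_-$ for \emph{every} base point $(x_0,u_0)\in M\times\R$; in particular, taking the base point to be $(x(s),u(s))$ rather than $(x(s),u_-(x(s)))$ as you wrote, one gets $u_-(x(t))=h_{x(s),u(s)}(x(t),+\infty)$. But Proposition~\ref{lem3.1} says this last quantity equals $u(t)$. That is the entire argument: no Gronwall, no sign-tracking of $F$, no letting $s\to-\infty$.

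Your detour begins exactly at the line where you write ``choosing the base point along the static orbit, $u_-(x(t))=\lim_{\sigma\to+\infty}h_{x(s),u_-(x(s))}(x(t),\sigma)$.'' By inserting $u_-(x(s))$ as the initial height you make the statement depend on what you are trying to prove, and everything after that is an attempt to recover from this self-inflicted circularity. The Gronwall route you sketch is not obviously salvageable either: the comparison you need anchors on knowing $u(s_0)=u_-(x(s_0))$ at some point, which is precisely the conclusion. The paper's proof avoids this by first reading off $u(0)=h_{x(0),u(0)}(x(0),+\infty)=u_-(x(0))$ (Proposition~\ref{lem3.1} at $s=t=0$ plus Proposition~\ref{pr4.655}), and then leverages that single equality together with $u_-=T^-_s u_-$ and the static definition; but even that is slightly longer than necessary, since the same two propositions applied at an arbitrary $t$ give the result directly.
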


\begin{proof}

Let $x_0:=x(0)$, $u_0:=u(0)$. By Proposition \ref{pr4.655}, $h_{x_0,u_0}(x,+\infty)=u_-(x)$. Note that $(x(\cdot),u(\cdot)):\mathbb{R}\to M\times\mathbb{R}$ is  static, it follows from Proposition \ref{lem3.1} that $u_-(x_0)=h_{x_0,u_0}(x_0,+\infty)=u_0$. Then for each $x\in M$ and each $s>0$,
\[h_{x_0,u_0}(x,+\infty)=u_-(x)=T_s^-u_-(x)=\inf_{y\in M}h_{y,u_-(y)}(x,s)\leq h_{x_0,u_0}(x,s),\]
which implies $h_{x_0,u_0}(x,+\infty)\leq \inf_{s>0} h_{x_0,u_0}(x,s)$. On the other hand, it is clear that \[h_{x_0,u_0}(x,+\infty)\geq\inf_{s>0} h_{x_0,u_0}(x,s).\] It gives rise to
\[h_{x_0,u_0}(x,+\infty)=\inf_{s>0} h_{x_0,u_0}(x,s).\]
Moreover, for each $t\in \R$,
\[u_-(x(t))=h_{x_0,u_0}(x(t),+\infty)=\inf_{s>0} h_{x_0,u_0}(x(t),s)=u(t).\]
This completes the proof of Lemma \ref{keey}.
	
\end{proof}

\begin{proposition}
\label{mane5}
	\[\tilde{\mathcal{A}}=\tilde{\Sigma}=\tilde{\mathcal{I}}_{u_+}.\]
\end{proposition}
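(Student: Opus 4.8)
The plan is to prove the chain of inclusions $\tilde{\mathcal{A}}\subseteq\tilde{\Sigma}\subseteq\tilde{\mathcal{I}}_{u_+}\subseteq\tilde{\mathcal{A}}$, so that all three sets coincide. For the first inclusion, take a static orbit $(x(t),u(t),p(t))$. By Lemma \ref{keey}, $u(t)=u_-(x(t))$ for all $t\in\R$; moreover, since a static curve is globally minimizing, Proposition \ref{pr22} shows $x(t)$ is $C^1$, and by Proposition \ref{lem3.1} the restriction to any negative half-line is, after a reversal of time, a curve realizing the infimum defining $u_-$ as $h_{x_0,u_0}(\cdot,+\infty)$. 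In particular one obtains that $t\mapsto x(t)$, restricted to $(-\infty,0]$, is a $(u_-,L,0)$-calibrated curve, so by Proposition \ref{inva} the point $(x(0),u_-(x(0)),Du_-(x(0)))$ lies in $G_{u_-}$ and, because the whole orbit consists of such points (calibration holds on every half-line $(-\infty,a]$), we get $\Phi_{-t}(x(0),u(0),p(0))\in G_{u_-}$ for every $t\geq 0$, i.e. $(x(0),u(0),p(0))\in\tilde{\Sigma}$. Since $\tilde{\mathcal A}$ and $\tilde\Sigma$ are $\Phi_t$-invariant this gives $\tilde{\mathcal{A}}\subseteq\tilde{\Sigma}$.

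For $\tilde{\Sigma}\subseteq\tilde{\mathcal{I}}_{u_+}$, let $(x,u,p)\in\tilde{\Sigma}$. By definition of $\tilde{\Sigma}$, for every $t\geq 0$ the backward iterate $\Phi_{-t}(x,u,p)$ lies in $G_{u_-}$; unwinding this (using that $G_{u_-}$ is essentially the graph of $Du_-$ along calibrated curves, via Proposition \ref{inva} and Lemmas \ref{labell}, \ref{diffe}) produces a $C^1$ curve $\gamma:(-\infty,0]\to M$ with $\gamma(0)=x$ that is $(u_-,L,0)$-calibrated and along which $(\gamma(s),u_-(\gamma(s)),Du_-(\gamma(s)))=\Phi_{s}(x,u,p)$. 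I would then argue that this orbit is in fact \emph{static}: one shows $u_-(x)=\inf_{s>0}h_{x(-s),u_-(x(-s))}(x,s)$ using the calibration identity together with Proposition \ref{pr4.655} (so $h_{x_0,u_0}(\cdot,+\infty)=u_-$), exactly as in Lemma \ref{keey} run in reverse. Hence $(x,u,p)\in\tilde{\mathcal{A}}$, which in particular, combined with Lemma \ref{keey}, forces $u=u_-(x)$; and since static orbits are globally minimizing in both time directions, the forward calibration together with the maximality of $u_+$ (Main Result \ref{pair11}) gives $u_-(x)=u_+(x)$ and $Du_-(x)=Du_+(x)$, i.e. $(x,u,p)\in\tilde{\mathcal{I}}_{u_+}=G_{u_-}\cap G_{u_+}$ by \eqref{500}. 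Thus $\tilde{\Sigma}\subseteq\tilde{\mathcal{I}}_{u_+}$, and this argument simultaneously yields $\tilde{\Sigma}\subseteq\tilde{\mathcal{A}}$.

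For the last inclusion $\tilde{\mathcal{I}}_{u_+}\subseteq\tilde{\mathcal{A}}$, take $x\in\mathcal{I}_{u_+}$, so $u_-(x)=u_+(x)$. By Lemma \ref{iinv} there is a $C^1$ curve $\gamma:\R\to M$ through $x$ along which $u_-\equiv u_+$ and which is simultaneously $(u_-,L,0)$-calibrated on $(-\infty,t]$ and $(u_+,L,0)$-calibrated on $[t,+\infty)$ for every $t$; setting $u(t)=u_-(\gamma(t))$ and $p(t)=\partial L/\partial\dot x$, this is an orbit of $\Phi_t$. I would check it is static directly from the definitions: global minimality follows because on each interval the calibration identity realizes the implicit action function $h_{\gamma(t_1),u(t_1)}(\gamma(t_2),t_2-t_1)$ (this needs the negative-time calibration, available since $\gamma$ is $(u_-,L,0)$-calibrated on all of $(-\infty,t_2]$), and the static condition $u(t_2)=\inf_{s>0}h_{\gamma(t_1),u(t_1)}(\gamma(t_2),s)$ follows from $u_-=h_{\cdot,\cdot}(\cdot,+\infty)$ (Proposition \ref{pr4.655}) together with the fact that $u_-=T^-_s u_-$ makes $s\mapsto h_{\gamma(t_1),u(t_1)}(\gamma(t_2),s)$ have infimum equal to its limit $u_-(\gamma(t_2))=u(t_2)$, as in Lemma \ref{keey}. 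Hence $(x,u,p)\in\tilde{\mathcal{A}}$.

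The main obstacle I anticipate is the $\tilde{\Sigma}\subseteq\tilde{\mathcal{A}}$ step: one must upgrade membership in $\bigcap_{t\ge 0}\Phi_{-t}(G_{u_-})$ — which only gives a backward-calibrated semi-orbit for $u_-$ — to the full two-sided static property, and this is where the forward solution $u_+$, its maximality, and the identity $u_-=h_{x_0,u_0}(\cdot,+\infty)$ all have to be brought in to produce a forward $(u_+,L,0)$-calibrated continuation and then to verify the $\inf_{s>0}$ characterization. Keeping careful track of which calibration (for $u_-$ versus $u_+$) is available on which time interval, and ensuring the glued curve is genuinely $C^1$ at the junction (via Lemmas \ref{labell} and \ref{diffe}), is the delicate bookkeeping in the argument.
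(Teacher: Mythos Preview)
Your cycle runs $\tilde{\mathcal{A}}\subseteq\tilde{\Sigma}\subseteq\tilde{\mathcal{I}}_{u_+}\subseteq\tilde{\mathcal{A}}$, whereas the paper proves the \emph{reversed} cycle $\tilde{\Sigma}\subset\tilde{\mathcal{A}}\subset\tilde{\mathcal{I}}_{u_+}\subset\tilde{\Sigma}$. That reversal is not cosmetic: it lets the paper replace your hardest step by a near-triviality. Namely, $\tilde{\mathcal{I}}_{u_+}\subset\tilde{\Sigma}$ follows in two lines: by Lemma~\ref{iinv} the set $\tilde{\mathcal{I}}_{u_+}$ is $\Phi_t$-invariant and is contained in $G_{u_-}$, so for every $t\geq 0$ one has $\Phi_t(\tilde{\mathcal{I}}_{u_+})=\tilde{\mathcal{I}}_{u_+}\subset G_{u_-}$, hence $\tilde{\mathcal{I}}_{u_+}\subset\bigcap_{t\geq 0}\Phi_{-t}(G_{u_-})=\tilde{\Sigma}$. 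By contrast, your $\tilde{\mathcal{I}}_{u_+}\subseteq\tilde{\mathcal{A}}$ requires you to verify the static condition by hand. Your $\tilde{\mathcal{A}}\subseteq\tilde{\Sigma}$ step is fine (and the paper's $\tilde{\Sigma}\subset\tilde{\mathcal{A}}$ is the same computation you hide inside Step~2: once $u(t)=u_-(x(t))$ for all $t$, global minimality and the static condition both drop out of $T^-_s u_-=u_-$ and Proposition~\ref{pr4.655}). A small slip: membership in $\tilde{\Sigma}=\bigcap_{t\geq 0}\Phi_{-t}(G_{u_-})$ says the \emph{forward} iterates $\Phi_t(x,u,p)$ lie in $G_{u_-}$, not the backward ones; you use both directions, which is justified only after invoking the full $\Phi_t$-invariance of $\tilde{\Sigma}$.

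There is a genuine gap in your Step~2. After showing that a point of $\tilde{\Sigma}$ gives a static orbit, you claim that ``forward calibration together with the maximality of $u_+$'' yields $u_-(x)=u_+(x)$. This is not an argument: the orbit is $(u_-,L,0)$-calibrated on $[0,+\infty)$, but that says nothing about $u_+$ a priori, and maximality of $u_+$ among forward solutions gives $u_+\leq u_-$, which is the wrong direction. The paper's device here is to show directly that $T^+_t u_-(x(\tau))=u_-(x(\tau))$ for all $t>0$ (hence $u_+(x(\tau))=\lim_{t\to\infty}T^+_t u_-(x(\tau))=u_-(x(\tau))$): from $h_{x(\tau),u(\tau)}(x(\tau+t),t)=u(\tau+t)$ one gets $h^{x(\tau+t),u(\tau+t)}(x(\tau),t)=u(\tau)$, whence
\[
T^+_t u_-(x(\tau))=\sup_{y}h^{y,u_-(y)}(x(\tau),t)\geq h^{x(\tau+t),u_-(x(\tau+t))}(x(\tau),t)=u_-(x(\tau)),
\]
the reverse inequality being Lemma~\ref{uplus}. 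Alternatively, since in your Step~2 you \emph{start} from $(x,u,p)\in\tilde{\Sigma}$, you can simply cite Main Result~\ref{pair11}, which already gives $u_-=u_+$ on $\Sigma=\pi\tilde{\Sigma}$; then Lemma~\ref{iinv} supplies $Du_-(x)=Du_+(x)$ and you land in $\tilde{\mathcal{I}}_{u_+}$ without passing through $\tilde{\mathcal{A}}$ at all.
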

\Proof
We prove the proposition by showing that
\[\tilde{\Sigma}\subset \tilde{\mathcal{A}}\subset \tilde{\mathcal{I}}_{u_+}\subset \tilde{\Sigma}.\]

\medskip

\noindent Step 1: $\tilde{\Sigma}\subset \tilde{\mathcal{A}}$.
For each $(x_0,u_0,p_0)\in \tilde{\Sigma}$, let $(x(t),u(t),p(t)):=\Phi_t(x_0,u_0,p_0)$. Then $u_-(x(t))=u(t)$ for all $t\in\R$. We assert that $(x(t),u(t))$ is globally minimizing. In fact, for each $t_1<t_2$, we get
\begin{align*}
u(t_2)=u_-(x(t_2))=T_{t_2-t_1}^-u_-(x(t_2))
\leq h_{x(t_1),u_-(x(t_1))}(x(t_2),t_2-t_1)=h_{x(t_1),u(t_1)}(x(t_2),t_2-t_1).
\end{align*}
By the minimality of $h_{x(t_1),u(t_1)}(x(t_2),t_2-t_1)$, we have $h_{x(t_1),u(t_1)}(x(t_2),t_2-t_1)\leq u(t_2)$. It follows that $(x(t),u(t))$ is globally minimizing.

For each $t_1$, $t_2\in\R$, since $u(t_2)=T_s^-u_-(x(t_2))\leq h_{x(t_1),u_-(x(t_1))}(x(t_2),s)$ for all $s>0$, it remains to show \[u(t_2)\geq \inf_{s>0}h_{x(t_1),u(t_1)}(x(t_2),s).\] By Proposition \ref{pr4.655}, we have  \[u(t_2)=u_-(x(t_2))=h_{x(t_1),u(t_1)}(x(t_2),+\infty)\geq \inf_{s>0}h_{x(t_1),u(t_1)}(x(t_2),s).\]

\medskip

\noindent Step 2: $\tilde{\mathcal{A}}\subset \tilde{\mathcal{I}}_{u_+}$.
We only need to show that for each static orbit $(x(t),u(t),p(t))$, 
\[u(t)=u_\pm(x(t)),\ Du_\pm(x(t))=p(t),\quad \forall t\in\R,\]
where 
 $p(t)=\frac{\partial L}{\partial \dot{x}}(x(t),u(t),\dot{x}(t))$.
In order to prove $u_-(x(\tau))=u_+(x(\tau))$ for each $\tau\in \R$,
it suffices to show that for each $t>0$, $T^+_tu_-(x(\tau))=u_-(x(\tau))$, since $u_+=\lim_{t\rightarrow+\infty}T^+_tu_-$.  By Lemma \ref{uplus}, $T^+_tu_-(x)\leq u_-(x)$ for each $x\in M$, which yields $T^+_tu_-(x(\tau))\leq u_-(x(\tau))$. On the other hand, by Lemma \ref{keey}, $u_-(x(\tau))=u(\tau)$. Then we have
\[T^+_tu_-(x(\tau))=\sup_{y\in M}h^{y,u_-(y)}(x(\tau),t)\geq h^{x(\tau+t),u(\tau+t)}(x(\tau),t)= u(\tau)=u_-(x(\tau)).\]
So far, we have verified $u_-(x(\tau))=u_+(x(\tau))$ for each $\tau\in \R$.
By Lemma \ref{labell}, \[Du_{\pm}(x(\tau))=\frac{\partial L}{\partial \dot{x}}(x(\tau),u_{\pm}(x(\tau)),\dot{x}(\tau)).\]

\medskip

\noindent Step 3: $\tilde{\mathcal{I}}_{u_+}\subset \tilde{\Sigma}$.
 In fact, for each $(x_0,u_0,p_0)\in \tilde{\mathcal{I}}_{u_+}$, from Lemma \ref{iinv},  $\tilde{\mathcal{I}}_{u_+}$ is invariant by $\Phi_t$, we have
	\[(x(t),u(t),p(t))=\Phi_{t}(x_0,u_0,p_0)\in \tilde{\mathcal{I}}_{u_+}\subset G_{u_-}, \quad \forall t\in \R.\]
	It follows that  $(x_0,u_0,p_0)\in \Phi_{-t}(G_{u_-})$ for each $t\in \R$. Hence we have
	\[(x_0,u_0,p_0)\in \bigcap_{t\geq 0}\Phi_{-t}(G_{u_-})=\tilde{\Sigma}.\]
This completes the proof.
\End

Therefore, Main Result \ref{aubrys} can be concluded by Proposition \ref{c1188}, Proposition \ref{mane5} and \eqref{500}. Since $\tilde{\Sigma}$ is non-empty, then $\tilde{\mathcal{A}}\neq \emptyset$.

\subsection{Proof of Main Result \ref{lyap}}
In this part, we show Main Result \ref{lyap} and Corollary \ref{omeggaa}.

\begin{proof}[Proof of Main Result \ref{lyap}] 
Since $v_+(x)\leq u_+(x)\leq u_-(x)$ for each  $x\in M$, then $B_{v_+}(\gm(t))\geq 0$ for $t\geq 0$.
Since $\gm:[0,+\infty)\rightarrow M$ is a $(v_+, L, 0)$-calibrated curve with $\gm(0)=x_0$, then for each $t'>t\geq 0$,
\begin{equation}\label{vpluss}
		v_+(\gm(t'))-v_+(\gamma(t))=\int_{t}^{t'}L(\gamma(s),v_+(\gamma(s)),\dot{\gamma}(s))ds.
		\end{equation}
On the other hand, we have
\[
		u_-(\gm(t'))-u_-(\gamma(t))\leq \int_{t}^{t'}L(\gamma(s),u_-(\gamma(s)),\dot{\gamma}(s))ds.
		\]
Note that $u_-(x)\geq v_+(x)$ for each $x\in M$. By (L3) and \eqref{vpluss}, we have
\[u_-(\gm(t'))-u_-(\gamma(t))\leq v_+(\gm(t'))-v_+(\gamma(t)),\]	
which implies $B_{v_+}(\gm(t'))\leq B_{v_+}(\gm(t))$ for all $t'>t\geq 0$. By Main Result \ref{aubrys}, if $x_0\in  M\backslash \mathcal{A}$, then $\gm(t)\in M\backslash \mathcal{A}$ for all $t\in [0,+\infty)$. Since $u_-(x)> u_+(x)\geq v_+(x)$ for all $x\in M\backslash \mathcal{A}$, by (L3) and \eqref{vpluss} again, we have
\[u_-(\gm(t'))-u_-(\gamma(t))< v_+(\gm(t'))-v_+(\gamma(t)),\quad \forall t'>t\geq 0.\]
It yields that for $x_0\in  M\backslash \mathcal{A}$,   $B_{v_+}(\gm(t'))<B_{v_+}(\gm(t))$ for $t'>t\geq 0$.
\end{proof}

\begin{proof}[Proof of Corollary \ref{omeggaa}]

(1) By Proposition \ref{inva}, $u_-(x)$ is differentiable at $x=\xi(t)$ for $t<0$ and
 \[Du_-(\xi(t))=\frac{\partial L}{\partial \dot{x}}(\xi(t),u_-(\xi(t)),\dot{\xi}(t)),\quad \forall t<0.\]
 Note that $p_0:=\frac{\partial L}{\partial \dot{x}}(x_0,u_0,\dot{\xi}(0)_-)$ and $u_0:=u_-(x_0)$. It follows that
 \[\xi(t)\rightarrow x_0,\quad Du_-(\xi(t))\rightarrow p_0,\quad \text{as} \ t\rightarrow 0^-,\]
 which implies $p_0\in D^*u_-(x_0)$. It follows that $(x_0,u_0,p_0)\in G_{u_-}$. By Main Result \ref{aubrys},
 \[\tilde{\mathcal{A}}=\bigcap_{t\geq 0}\Phi_{-t}(G_{u_-}),\]
 which implies
 \[\alpha(x_0,u_0,p_0)\subset \tilde{\mathcal{A}}.\]

(2) We prove the second part as follows.

\medskip

\noindent Step 1: For any given $x_0\in M$, $\lim_{t\to+\infty}B_{v_+}(\eta(t))=0$, where $\eta:[0,+\infty)\to M$ is a $(v_+,L,0)$-calibrated curve with $\eta(0)=x_0$. We show this assertion as follows. By Main Result \ref{lyap}, $B_{v_+}(\eta(t))$ is negative and decreasing. Thus, there is $\delta\geq 0$ such that $\lim_{t\to+\infty}B_{v_+}(\eta(t))=\delta$. We only need to show $\delta=0$. By similar arguments used in Proposition \ref{inva}, $(\eta(t),v_+(\eta(t)),p(t))=\Phi_t(x_0,v_0,p_0)$ for $t\geq 0$, where  
$v_0:=v_+(x_0)$, $p_0:=\frac{\partial L}{\partial \dot{x}}(x_0,v_0,\dot{\eta}(0)_+)$ and $p(t):=\frac{\partial L}{\partial \dot{x}}(\eta(t),v_+(\eta(t)),\dot{\eta}(t))$. Let $v(t):=v_+(\eta(t))$. For any $(\bar{x},\bar{v},\bar{p})\in\omega(x_0,v_0,p_0)$, there exists a sequence $\{t_n\}$ with $t_n\to+\infty$ as $n\to+\infty$, such that
$\lim_{n\to+\infty}(\eta(t_n),v(t_n),p(t_n))=(\bar{x},\bar{v},\bar{p})$. Let $(\bar{x}(s),\bar{v}(s),\bar{p}(s))=\Phi_s(\bar{x},\bar{v},\bar{p})$ for $s\geq 0$. Then for any $T>0$, we have
\begin{align*}
\lim_{n\to+\infty}(\eta_n(s),v_n(s),p_n(s))=	(\bar{x}(s),\bar{v}(s),\bar{p}(s)),\quad \text{uniformly on}\ s\in[0,T],
\end{align*}
where $(\eta_n(s),v_n(s),p_n(s)):=(\eta(t_n+s),v(t_n+s),p(t_n+s))$ for $s\in[0,T]$.
Since $\eta(t)$ is a $(v_+,L,0)$-calibrated curve, it is easy to see that $\bar{x}(t)$ is also a $(v_+,L,0)$-calibrated curve. Since 
$\lim_{t\to+\infty}B_{v_+}(\eta(t))=\delta$, then we get 
\[
B_{v_+}(\bar{x}(s))=\lim_{n\to+\infty}B_{v_+}(\eta_n(s))=\lim_{n\to+\infty}B_{v_+}(\eta(t_n+s))=\delta,\quad \forall s\in[0,T].
\]
Thus, we get $B_{v_+}(\bar{x}(s))=\delta$ for all $s\in[0,T]$. If $\delta>0$, by (L3), we have 
\begin{align*}
v_+(\bar{x}(T))-v_+(\bar{x}(0))&=\int_0^TL(\bar{x}(s),v_+(\bar{x}(s)),\dot{\bar{x}}(s))ds\\
&>\int_0^TL(\bar{x}(s),u_-(\bar{x}(s)),\dot{\bar{x}}(s))ds\\
&\geq 
u_-(\bar{x}(T))-u_-(\bar{x}(0)),
\end{align*}
which implies that
$B_{v_+}(\bar{x}(0))>B_{v_+}(\bar{x}(T))$, a contradiction. So, the assertion holds true.

Hence, by the above arguments, we can deduce that for any given $x_0\in M$ and any $(v_+,L,0)$-calibrated curve
$\eta:[0,+\infty)\to M$ with $\eta(0)=x_0$, if $(\bar{x},\bar{v},\bar{p})\in\omega(x_0,v_0,p_0)$, then $B_{v_+}(\bar{x})=0$, i.e., $\bar{x}\in\mathcal{I}_{v_+}$. By the arguments used in the proof of (1), since $G_{v_+}$ is $\Phi_t$ ($t\geq 0$) invariant, we can show that $(\bar{x},\bar{v},\bar{p})\in G_{v_+}$, which implies $\bar{v}=v_+(x)$. Then by Lemma \ref{iinv} and Remark \ref{rem4.10}, $\bar{p}=Du_-(\bar{x})=Dv_+(\bar{x})$. Therefore, $(\bar{x},\bar{v},\bar{p})\in\tilde{\mathcal{I}}_{v_+}$.

\medskip

\noindent Step 2: 
For each $v_+\in\mathcal{S}_+$, we have
\[
\tilde{\mathcal{I}}_{v_+}\subset \tilde{\mathcal{I}}_{u_+}.
\]
In fact, if $(x,u,p)\in\tilde{\mathcal{I}}_{v_+}$, then by definition we get $u=u_-(x)=v_+(x)$ and $p=Du_-(x)=Dv_+(x)$. Since $u_+\leq u_-$, then we have $u=u_-(x)=v_+(x)\leq u_+(x)\leq u_-(x)$. By Lemma \ref{iinv} and Remark \ref{rem4.10} again, we deduce that $p=Du_\pm(x)=Dv_+(x)$.
Thus, $(x,u,p)\in\tilde{\mathcal{I}}_{u_+}=\tilde{\mathcal{A}}$. 
\end{proof}

\appendix

\section{Appendix}
We restate Theorem \ref{Ad} and prove it  as follows.

\begin{theorem}\label{A}
	Assume  (H1), (H2) and (H3').
The following statements are equivalent.
\begin{itemize}
	\item [(1)] $H(x,u,p)$ is admissible;
	\item[(2)] for any given $(x_0,u_0)\in M\times\R$ and $\delta>0$, $h_{x_0,u_0}(x,t)$ is  bounded on $M\times[\delta,+\infty)$;
	\item[(3)] equation $H(x,u,Du)=0$ admits viscosity solutions.
\end{itemize}
\end{theorem}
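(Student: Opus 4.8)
The plan is to run the cycle $(1)\Rightarrow(3)\Rightarrow(2)$ together with $(2)\Rightarrow(3)\Rightarrow(1)$, which yields $(1)\Leftrightarrow(3)$ and $(2)\Leftrightarrow(3)$ at once. The implication $(1)\Rightarrow(3)$ is essentially free: under (H1), (H2), (H3$'$) and (A) Proposition~\ref{pr4.6} says that $\lim_{t\to+\infty}T^-_t\varphi$ exists and is a backward weak KAM solution of \eqref{hj}, which by Proposition~\ref{pr4.5} is a viscosity solution. For $(3)\Rightarrow(2)$ I would exploit that $\partial H/\partial u\ge 0$ makes $u+c$ a viscosity supersolution and $u-c$ a viscosity subsolution of \eqref{hj} for every constant $c\ge 0$; regarded as $t$-independent functions these are a super- and a subsolution of $w_t+H(x,w,w_x)=0$, so the comparison principle for the Cauchy problem gives $T^-_t(u-c)\ge u-c$, $T^-_t(u+c)\le u+c$, while $T^-_tu=u$. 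Given $(x_0,u_0)$ and $\delta>0$, set $B:=\sup_{y\in M}|h_{x_0,u_0}(y,\delta)|$, pick $c$ with $u-c\le-B\le B\le u+c$ on $M$, and combine the Markov property $h_{x_0,u_0}(x,t)=\inf_{y}h_{y,\,h_{x_0,u_0}(y,\delta)}(x,t-\delta)$ with monotonicity in the initial value and $T^-_t\varphi=\inf_y h_{y,\varphi(y)}(\cdot,t)$ (Proposition~\ref{pr4.2}) to get, for $t\ge 2\delta$,
\[
\min_M(u-c)\le T^-_{t-\delta}(u-c)\le T^-_{t-\delta}(-B)\le h_{x_0,u_0}(\cdot,t)\le T^-_{t-\delta}(B)\le T^-_{t-\delta}(u+c)\le\max_M(u+c);
\]
on $[\delta,2\delta]$ boundedness is just continuity. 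This is (2).

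For $(2)\Rightarrow(3)$: by Proposition~\ref{pr4.2} the function $W(x,t):=h_{x_0,u_0}(x,t+\delta)=T^-_t\varphi(x)$, with $\varphi:=h_{x_0,u_0}(\cdot,\delta)$, is a viscosity solution of $w_t+H(x,w,w_x)=0$ on $M\times(0,+\infty)$; by (2) it is bounded, and since $W_t=-H(x,W,W_x)$ with $W_x$ locally bounded it is equi-Lipschitz on $M\times[\delta,+\infty)$. Standard stability of viscosity solutions then shows that the relaxed long-time limits $\overline u(x):=\limsup_{t\to\infty}W(x,t)$ and $\underline u(x):=\liminf_{t\to\infty}W(x,t)$ are, respectively, a viscosity sub- and supersolution of \eqref{hj} (test with functions depending on $x$ only at long-time local extrema of $W-\phi$), and $\underline u\le\overline u$, so Perron's method produces a viscosity solution of \eqref{hj}.

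The heart of the matter is $(3)\Rightarrow(1)$, where one has to connect the contact datum to the Ma\~n\'e critical values $c(H^a)$ of the frozen Tonelli Hamiltonians $H(\cdot,a,\cdot)$. Let $u$ be a viscosity (equivalently backward weak KAM) solution, $m:=\min_M u$, $M_0:=\max_M u$. At a point where a $C^1$ function touches $u$ from above one has $H(x,m,\cdot)\le H(x,u(x),\cdot)\le 0$, so $u$ is a viscosity subsolution of $H(x,m,Dw)=0$ and hence $c(H^m)\le 0$. For the opposite sign at level $M_0$ I would argue dynamically: fix $x$, take a $(u,L,0)$-calibrated curve $\gamma:(-\infty,0]\to M$ with $\gamma(0)=x$; by the analogue of Proposition~\ref{inva} the curve $t\mapsto(\gamma(t),u(\gamma(t)),Du(\gamma(t)))$ is an orbit of $\Phi_t$ lying in $\{H=0\}$, so its $\alpha$-limit set $\widetilde\Lambda$ is a compact $\Phi_t$-invariant subset of $\{H=0\}$ whose points are of the form $(\bar x,u(\bar x),\bar p)$. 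By \cite{KB} there is a $\Phi_t$-invariant probability measure $\mu$ on $\widetilde\Lambda$; since the $u$-coordinate is bounded on $\widetilde\Lambda$ and evolves by $\dot u=\frac{\partial H}{\partial p}\cdot p-H$, invariance forces $\int_{\widetilde\Lambda}\big(\frac{\partial H}{\partial p}\cdot p-H\big)\,d\mu=0$, that is, $\int_{\widetilde\Lambda}L\big(\bar x,u(\bar x),\frac{\partial H}{\partial p}\big)\,d\mu=0$ by Legendre duality. Pushing $\mu$ forward under $(x,u,p)\mapsto\big(x,\frac{\partial H}{\partial p}(x,u,p)\big)$ gives a closed measure $\nu$ on $TM$, and because $u(\bar x)\le M_0$ and $L$ is non-increasing in $u$ (condition (L3)),
\[
\int_{TM}L(x,M_0,v)\,d\nu\ \le\ \int_{\widetilde\Lambda}L\big(\bar x,u(\bar x),\tfrac{\partial H}{\partial p}\big)\,d\mu\ =\ 0 .
\]
By the classical duality $c(H^{M_0})=-\min_{\nu'}\int_{TM}L(x,M_0,v)\,d\nu'$ over closed measures $\nu'$ (Ma\~n\'e \cite{Mn3}, see also \cite{CIPP}), this yields $c(H^{M_0})\ge 0$. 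Finally $a\mapsto c(H^a)$ is $\lambda$-Lipschitz and non-decreasing (since $0\le\partial H/\partial u\le\lambda$), so by the intermediate value theorem some $a\in[m,M_0]$ satisfies $c(H^a)=0$, i.e.\ $H$ is admissible.

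The main obstacle is this last step: a mere sub- or supersolution of the frozen equation $H(x,a,Dw)=0$ does not pin down the sign of $c(H^a)$ — a supersolution at a subcritical level is perfectly possible — so one genuinely needs a \emph{solution} of the contact equation and must extract from its calibrated curves a closed measure on which the frozen action at level $M_0$ is nonpositive, the monotonicity $\partial H/\partial u\ge 0$ being precisely what allows that transfer. Steps~2 and~3 are softer, resting on the comparison principle for the Cauchy problem and the classical stability/Perron machinery for viscosity solutions. (Under the stronger hypothesis (H3) one could instead quote results from the main text, but the appendix argument is kept self-contained under (H3$'$).)
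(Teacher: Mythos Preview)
Your proof is correct, but it takes a substantially different and heavier route than the paper's. The paper argues the straight cycle $(1)\Rightarrow(2)\Rightarrow(3)\Rightarrow(1)$. For $(1)\Rightarrow(2)$ it works directly: given $c(H^a)=0$, the classical estimate $b\le h_t^a(x,y)\le B$ for $t\ge\delta$ (Lemma~\ref{apen1}) combined with the Markov property and the monotonicity $\partial L/\partial u\le 0$ sandwiches $h_{x_0,u_0}(x,t)$ between constants depending only on $u_0,a,b,B$ --- no appeal to Proposition~\ref{pr4.6}, which is an external result from \cite{SWY} and whose use makes your appendix argument less self-contained. Your $(3)\Rightarrow(2)$ via comparison is fine but can be shortened: the $1$-Lipschitz property of $T^-_t$ (Proposition~\ref{pr4.1}(3), valid under (H3$'$)) already gives $u-c\le T^-_t(u\pm c)\le u+c$ directly, without invoking a PDE comparison principle. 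The largest divergence is in $(3)\Rightarrow(1)$. The paper simply observes that for $a_1=\max_M\bar u$ and $a_2=\min_M\bar u$ the \emph{frozen} Lax--Oleinik operators satisfy $T^-_{a_1,t}\bar u\le T^-_t\bar u=\bar u$ and $T^-_{a_2,t}\bar u\ge\bar u$; Fathi's convergence $T^-_{a_i,t}\bar u+c(H^{a_i})t\to v_i$ then forces $c(H^{a_1})\ge 0\ge c(H^{a_2})$ in two lines. Your alternative --- extracting a $\Phi_t$-invariant measure on the $\alpha$-limit set of a calibrated orbit, pushing it forward to a closed measure $\nu$ on $TM$, and invoking Ma\~n\'e's duality $c(H^{M_0})=-\min_{\nu'}\int L(\cdot,M_0,\cdot)\,d\nu'$ --- is valid (closedness of $\nu$ does follow from invariance of $\mu$, since $\int D\phi(x)\cdot\tfrac{\partial H}{\partial p}\,d\mu=\int\tfrac{d}{dt}\phi(x(t))\,d\mu=0$), but it calls on considerably more machinery for what the paper accomplishes by a monotonicity-and-limit argument. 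Your approach has the merit of exposing the dynamical meaning of admissibility via minimizing measures; the paper's buys brevity and keeps the appendix genuinely elementary.
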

For any $a\in\R$, following Mather \cite{Mvc}, we define

\[
h_t^a(x,y):=\inf_{\gamma}\int_0^tL(\gamma(s),a,\dot{\gamma}(s))ds,
\]
where the infimum is taken over all the continuous and piecewise $C^1$ curves $\gamma:[0,t]\to M$ with $\gamma(0)=x$ and $\gamma(t)=y$. 

Denote by $c(H^a)$  Ma\~n\'e's critical value of $H(x,a,p)$. Recall the following well-known result first. See for example \cite[Lemma 5.3.2]{Fat-b} for the proof.

\begin{lemma}\label{apen1}
	Given $a\in\R$, for any $\delta>0$, there are constants $b$ and $B$ such that 
	
	\[
	b\leq h_t^a(x,y)+c(H^a)t\leq B,\quad \forall x, y\in M, \ \forall t\geq \delta,
	\]
	where $b$ depends on $a$ only, $B$ depends on $a$ and $\delta$.
\end{lemma}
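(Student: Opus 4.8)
The plan is to prove the lower and upper bounds separately. For the lower bound the plan is to use only the representation $c(H^a)=\inf_{u\in C^\infty(M,\R)}\sup_{x\in M}H(x,a,Du(x))$ recalled above, together with superlinearity (H2); this will give a bound valid for \emph{all} $t>0$, with $b=b(a)$. For the upper bound the plan is to invoke the classical weak KAM theorem for the Tonelli Hamiltonian $H^a:=H(\cdot,a,\cdot)$ and then split $t\in[\delta,+\infty)$ into the compact range $[\delta,1]$, handled by bounded-speed competitor curves, and the range $(1,+\infty)$, handled by gluing a bounded-action curve onto a calibrated curve; here $B=B(a,\delta)$.

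For the lower bound, fix $\eps\in(0,1]$ and choose $u_\eps\in C^\infty(M,\R)$ with $H(x,a,Du_\eps(x))\le c(H^a)+\eps$ for every $x$. By superlinearity the sublevel set $\{(x,p):H(x,a,p)\le c(H^a)+1\}$ is contained in $\{\|p\|_x\le R\}$ for some $R=R(a)$, so each such $u_\eps$ is $R$-Lipschitz and hence $\max_M u_\eps-\min_M u_\eps\le R\,\mathrm{diam}(M)$. Integrating the Fenchel inequality $\langle Du_\eps(x),v\rangle\le L(x,a,v)+H(x,a,Du_\eps(x))$ along an arbitrary curve $\gamma:[0,t]\to M$ with $\gamma(0)=x$, $\gamma(t)=y$ gives
\[
u_\eps(y)-u_\eps(x)\le\int_0^t L(\gamma(s),a,\dot\gamma(s))\,ds+\big(c(H^a)+\eps\big)t ,
\]
so the action on the right is at least $-R\,\mathrm{diam}(M)-\eps t$; taking the infimum over $\gamma$ and then letting $\eps\to 0^+$ yields $h_t^a(x,y)+c(H^a)t\ge -R(a)\,\mathrm{diam}(M)=:b$, which depends on $a$ only.

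For the upper bound, when $t\in[\delta,1]$ a constant-speed curve from $x$ to $y$ along a minimizing geodesic has speed at most $\mathrm{diam}(M)/\delta$, hence action at most a finite constant $A(a,\delta)$, so $h_t^a(x,y)+c(H^a)t\le A(a,\delta)+|c(H^a)|$. When $t>1$, I would take a backward weak KAM solution $u$ of $H(x,a,Du)=c(H^a)$ for $H^a$ (which exists by Fathi's weak KAM theorem; see \cite{Fat-b}) and, for a fixed $y\in M$, a calibrated curve $\gamma_y:(-\infty,0]\to M$ with $\gamma_y(0)=y$, so that $u(y)-u(\gamma_y(-s))=\int_{-s}^0 L(\gamma_y(\tau),a,\dot\gamma_y(\tau))\,d\tau+c(H^a)s$ for all $s\ge 0$. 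Concatenating a bounded-speed curve from $x$ to $\gamma_y(-(t-1))$ on $[0,1]$ (action $\le A(a,1)$) with $\gamma_y$ restricted to $[-(t-1),0]$ produces an admissible competitor for $h_t^a(x,y)$ of total action at most $A(a,1)+u(y)-u(\gamma_y(-(t-1)))-c(H^a)(t-1)\le A(a,1)+2\|u\|_\infty-c(H^a)(t-1)$, whence $h_t^a(x,y)+c(H^a)t\le A(a,1)+2\|u\|_\infty+c(H^a)$ for every $t>1$. Taking $B$ to be the larger of the two bounds finishes the proof.

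The main obstacle, and the one point requiring care, is the uniform equi-Lipschitz bound on the near-subsolutions $u_\eps$ in the lower bound: this is precisely where superlinearity (H2) is used, and it is what allows $b$ to be chosen independently of $\delta$ and of $\eps$. The appeal to the weak KAM theorem for $H^a$ in the upper bound is non-circular, since it concerns only the classical Tonelli Hamiltonian $H(\cdot,a,\cdot)$ and not the contact system \eqref{c}; should one wish to avoid it, the range $t>1$ can instead be handled by subadditivity of $h_\bullet^a$, gluing bounded-action paths into and out of the projected Aubry set of $H^a$ and using that the Peierls barrier of $H^a$ vanishes there.
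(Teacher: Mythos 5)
The paper does not actually prove Lemma \ref{apen1}; it simply cites \cite[Lemma 5.3.2]{Fat-b}, so there is no in-paper argument to compare against. Your proof is correct and self-contained, and it is essentially the standard Fathi-style argument that the citation points to: the lower bound comes from a near-critical smooth subsolution $u_\eps$ (whose uniform Lipschitz constant is controlled by superlinearity) and the Fenchel inequality, and the upper bound comes from splitting $[\delta,\infty)$ at $t=1$, using a bounded-speed competitor for short times and gluing a unit-time bounded-action arc onto a weak KAM calibrated ray for long times. One small remark: in the lower bound you invoke that the sublevel set $\{(x,p):H(x,a,p)\le c(H^a)+1\}$ lies in $\{\|p\|_x\le R(a)\}$; this uses the fact that on a compact manifold the pointwise superlinearity (H2) improves to superlinearity uniform in $x$, which is standard but worth flagging since (H2) is only stated pointwise. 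With that observation your choice $b=-R(a)\,\mathrm{diam}(M)$ indeed depends on $a$ alone and is valid for all $t>0$, and your two-regime $B(a,\delta)$ (with the $t>1$ bound in fact $\delta$-independent) is exactly what the lemma requires.
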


\vskip0.1cm

\begin{proof}[Proof of Theorem \ref{A}.] We shall show that (1) implies (2) implies (3) implies (1). 

\medskip

\noindent Step 1: We show that (1) implies (2). Since $H(x,u,p)$ is admissible, there is $a\in\R$ such that $c(H^a)=0$. Given any $x_0\in M$, $u_0\in\R$ and $\delta>0$, for $(x,t)\in M\times[\delta,+\infty)$ with $h_{x_0,u_0}(x,t)>a$, let $\gamma:[0,t]\to M$ be a minimizer of $h_t^a(x_0,x)$. Consider the function $s\mapsto h_{x_0,u_0}(\gamma(s),s)$ for $s\in(0,t]$. Since $\lim_{s\to 0}h_{x_0,u_0}(\gamma(s),s)=u_0$ and $h_{x_0,u_0}(x,t)>a$, then there exists $s_0\in[0,t)$ such that $h_{x_0,u_0}(\gamma(s_0),s_0)\leq \max\{u_0,a\}$ and $h_{x_0,u_0}(\gamma(s),s)>a$ for $s\in(s_0,t]$. Hence, by the Markov property of the forward implicit function and (H3'), we have
\begin{align*}
h_{x_0,u_0}(x,t) &\leq h_{x_0,u_0}(\gamma(s_0),s_0)+\int_{s_0}^tL(\gamma(s),h_{x_0,u_0}(\gamma(s),s),\dot{\gamma}(s))ds\\
&\leq \max\{u_0,a\}+\int_{s_0}^tL(\gamma(s),a,\dot{\gamma}(s))ds\\
&=\max\{u_0,a\}+h^a_{t-s_0}(\gamma(s_0),x)\\
&\leq \max\{u_0,a\}+B-b,
\end{align*}
where $b$ and $B$ are the constants in Lemma \ref{apen1}. We have proved that $h_{x_0,u_0}(x,t)$ is bounded from above on $M\times[\delta,+\infty)$.

On the other hand, for $(x,t)\in M\times[\delta,+\infty)$ with $h_{x_0,u_0}(x,t)<a$, let $\alpha:[0,t]\to M$ be a minimizer of $h_{x_0,u_0}(x,t)$. Consider the function $s\mapsto h_{x_0,u_0}(\alpha(s),s)$ for $s\in(0,t]$. Since $\lim_{s\to 0}h_{x_0,u_0}(\alpha(s),s)=u_0$ and $h_{x_0,u_0}(x,t)<a$, then there exists $s_0\in[0,t)$ such that $h_{x_0,u_0}(\alpha(s_0),s_0)\geq \min\{u_0,a\}$ and $h_{x_0,u_0}(\alpha(s),s)<a$ for $s\in(s_0,t]$. Hence, by the Markov property of the forward implicit function and (H3'), we have
\begin{align*}
h_{x_0,u_0}(x,t) &= h_{x_0,u_0}(\alpha(s_0),s_0)+\int_{s_0}^tL(\alpha(s),h_{x_0,u_0}(\alpha(s),s),\dot{\alpha}(s))ds\\
&\geq \min\{u_0,a\}+\int_{s_0}^tL(\alpha(s),a,\dot{\alpha}(s))ds\\
&\geq\min\{u_0,a\}+h^a_{t-s_0}(\alpha(s_0),x)\\
&\geq \min\{u_0,a\}+b,
\end{align*}
which shows that  $h_{x_0,u_0}(x,t)$ is bounded from below on $M\times[\delta,+\infty)$.

\medskip

\noindent Step 2: We show that (2) implies (3). 
For any given $x_0\in M$, $u_0\in\R$ and $\delta>0$, by (2) we have $h_{x_0,u_0}(x,t)$ is bounded on $M\times[\delta,+\infty)$, i.e., there is $C>0$ such that
\begin{align}\label{7-1}
-C\leq h_{x_0,u_0}(x,t)\leq C, \quad \forall (x,t)\in M\times[\delta,+\infty).
\end{align}
Hence, we can define a function by  $\bar{u}(x):=\liminf_{t\to+\infty}h_{x_0,u_0}(x,t)$. If the family of functions $\{h_{x_0,u_0}(x,t)\}_{t>\Delta}$ for some $\Delta>0$ is equi-Lipschitz on $M$, then by similar arguments in Step 2 of the proof of Theorem 1.2 in \cite{WWY1}, we can obtain $\bar{u}$ is a viscosity solution of equation $H(x,u,Du)=0$.

For $t>2\delta$, we have
\begin{align}\label{7-2}
h_{x_0,u_0}(x,t)=T^-_\delta h_{x_0,u_0}(x,t-\delta)=\inf_{y\in  M}h_{y,h_{x_0,u_0}(y,t-\delta)}(x,\delta).
\end{align}
Recall that the function $(y,v,x,t)\mapsto h_{y,v}(x,t)$ is Lipschitz on $M\times[-C,C]\times M\times [\delta,2\delta]$. Thus, by (\ref{7-1}) and (\ref{7-2}) we conclude that for $t>2\delta$ the family of functions $\{h_{x_0,u_0}(x,t)\}$ is equi-Lipschitz on $M$, which completes the proof of the existence of viscosity solutions of equation $H(x,u,Du)=0$.

\medskip

\noindent Step 3: We show that (3) implies (1). 
Let $\bar{u}$ be a viscosity solution of equation $H(x,u,Du)=0$ and
\[
a_1=\sup_{x\in M}\bar{u}(x),\qquad a_2=\inf_{x\in M}\bar{u}(x).
\]
Let $T_{a_i,t}^-$ denote the Lax-Oleinik operator associated with Lagrangian $L(x,a_i,\dot{x})$, $i=1,2$. They  are  nonlinear operators from $C(M,\R)$ to itself,  defined by
\[
T_{a_i,t}^-\bar{u}(x)=\inf_{\gamma}\left\{\bar{u}(\gamma(0))+\int_0^tL(\gamma(s),a_i,\dot{\gamma}(s))ds\right\},
\]
where the infimum is taken over all the absolutely continuous
curves $\gamma:[0,t]\to M$ with $\gamma(t)=x$. Recall the definition of backward solution semigroup for contact Lagrangian $L(x,u,\dot{x})$, we have
\[
T^-_t\bar{u}(x)=\inf_{\gamma}\left\{\bar{u}(\gamma(0))+\int_0^tL(\gamma(s),T^-_s\bar{u}(\gamma(s)),\dot{\gamma}(s))ds\right\},
\]
where the infimum is taken among the absolutely continuous curves $\gamma:[0,t]\to M$ with $\gamma(t)=x$. Since $\bar{u}$ is a  viscosity solution of equation $H(x,u,Du)=0$, then $T^-_t\bar{u}=\bar{u}$ for all $t>0$. Thus, by (H3') we have
\[
T_{a_1,t}^-\bar{u}(x)\leq \bar{u}(x),\quad T_{a_2,t}^-\bar{u}(x)\geq \bar{u}(x)
\]
for all $t>0$ and all $x\in M$, which imply that 
\[
\Big(T_{a_1,t}^-\bar{u}(x)+c(H^{a_1})t\Big)-c(H^{a_1})t\leq \bar{u}(x),
\]
and
\[
\Big(T_{a_2,t}^-\bar{u}(x)+c(H^{a_2})t\Big)-c(H^{a_2})t\geq \bar{u}(x)
\]
for all $t>0$ and all $x\in M$. From the convergence of the Lax-Oleinik semigroup \cite{Fat98b}, it is easy to see that $c(H^{a_1})\geq 0$ 
and $c(H^{a_2})\leq 0$. By the continuity of the function $a\mapsto c(H^a)$, there exists a constant $a_3$ such that $c(H^{a_3})=0$, i.e., $H(x,u,p)$ is admissible. 

\end{proof}

\vskip 1cm

\noindent {\bf Acknowledgements:}
Kaizhi Wang is supported by NSFC Grant No. 11771283.
Lin Wang is supported by NSFC Grants No. 11790273, 11631006.
Jun Yan is supported by NSFC Grant No.  11631006.

\medskip

\end{document}